\date{March 19, 2016}
\begin{document}

\centerline {\Large{\bf  Clustered  Hyperbolic Categories}}

\centerline{}

\centerline{}

\centerline{\bf {Ibrahim Saleh}}

\centerline{Email: ibrahim.saleh@uwc.edu}

 \newtheorem{thm}{Theorem}[section]
 \newtheorem{cor}[thm]{Corollary}
 \newtheorem{lem}[thm]{Lemma}
 \newtheorem{prop}[thm]{Proposition}
 \theoremstyle{definition}
 \newtheorem{defn}[thm]{Definition}
 \newtheorem{defns}[thm]{Definitions}
 \theoremstyle{remark}
 \newtheorem{rem}[thm]{Remark}
 \newtheorem{rems}[thm]{Remarks}
 \newtheorem{exam}[thm]{Example}
 \newtheorem{exams}[thm]{Examples}
 \newtheorem{conj}[thm]{Conjecture}
 \newtheorem{que}[thm]{Question}
 \newtheorem{rem and def}[thm]{Remark and Definition}
 \newtheorem{def and rem}[thm]{Definition and Remark}
 \newtheorem{corr}[thm]{Corollary of the Proof of Proposition 5.15}
 \numberwithin{equation}{section}
\newtheorem{IbrI}{Lemma}[section]
\newtheorem{chiral}[IbrI]{Definition}
\newtheorem{IbrII}[IbrI]{Lemma}
\newcommand{\field}[1]{\mathbb{#1}}

 \newcommand{\eps}{\varepsilon}
 \newcommand{\To}{\longrightarrow}
 \newcommand{\h}{\mathcal{H}}
 \newcommand{\s}{\mathcal{S}}
 \newcommand{\A}{\mathcal{A}}
 \newcommand{\J}{\mathcal{J}}
 \newcommand{\M}{\mathcal{M}}
 \newcommand{\W}{\mathcal{W}}
 \newcommand{\X}{\mathcal{X}}
 \newcommand{\BOP}{\mathbf{B}}
 \newcommand{\BH}{\mathbf{B}(\mathcal{H})}
 \newcommand{\KH}{\mathcal{K}(\mathcal{H})}
 \newcommand{\Real}{\mathbb{R}}
 \newcommand{\Complex}{\mathbb{C}}
 \newcommand{\Field}{\mathbb{F}}
 \newcommand{\RPlus}{\Real^{+}}
 \newcommand{\Polar}{\mathcal{P}_{\s}}
 \newcommand{\Poly}{\mathcal{P}(E)}
 \newcommand{\EssD}{\mathcal{D}}
 \newcommand{\Lom}{\mathcal{L}}
 \newcommand{\States}{\mathcal{T}}
 \newcommand{\abs}[1]{\left\vert#1\right\vert}
 \newcommand{\set}[1]{\left\{#1\right\}}
 \newcommand{\seq}[1]{\left<#1\right>}
 \newcommand{\norm}[1]{\left\Vert#1\right\Vert}
 \newcommand{\essnorm}[1]{\norm{#1}_{\ess}}
 \newcommand{\beq}{\begin{equation}}
\newcommand{\eeq}{\end{equation}}
\newcommand{\rarr}{\rightarrow}
\newcommand{\cA}{\mathcal{A}}
\newcommand{\cS}{\mathcal{S}}
\newcommand{\cC}{\mathcal{C}}
\newcommand{\cU}{\mathcal{U}}
\newcommand{\cR}{\mathcal{R}}
\newcommand{\RMod}{R\text{-Mod}}
\newcommand{\AMod}{A\text{-Mod}}
\newcommand{\Rep}{\text{Rep}}
\newcommand{\Aut}{\text{Aut}}
\newcommand{\XAut}{\xi\Aut}
\newcommand{\Rtzn}{R \{\theta, z, n \}}
\newcommand{\TxC}{(\Theta, \xi, \cC)}
\newcommand{\Chir}{\text{Chir}}

\tableofcontents

\begin{abstract} We  introduce  \emph{clustered hyperbolic categories}, which are constructed using a ``functorial" version of preseeds mutations called \emph{categorical  mutations}.  Every weyl preseed $p$ gives rise to a \emph{categorical preseed} $\mathcal{P}$ which generates a clustered hyperbolic category, that is generated by  copies of categories,  each one is equivalent  to the  category of representations of the Weyl cluster algebras $\mathcal{H}(p)$.  A  ``categorical realization" of Weyl cluster algebra is provided in the sense of   defining a  map $\mathbf{F}_{p}$ from any clustered hyperbolic category induced from  $p$ to the Weyl cluster algebra $\mathcal{H}(p)$ where image of $\mathbf{F}_{p}$ generates  $\mathcal{H}(p)$.

\end{abstract}

{\bf Mathematics Subject Classification (2010): } Primary 13F60, Secondary  16S32,  16G, 18E10.\\

{\bf Keywords:} Cluster Algebras,  Weyl cluster algebras, Representations Theory, Categorification of Generalized Weyl Algebras.

\section{Introduction}

Cluster algebras were introduced by S. Fomin and A. Zelevinsky in [10, 11, 12, 19, 2]. A cluster algebra is a commutative algebra with a distinguished set of generators called cluster variables and particular type of relations called mutations. A quantum  version was introduced in  [3] and [7, 8, 9].

Generalized Weyl algebras  were first introduced by  V. Bavula in [1] and separately  as  hyperbolic algebras by  A. Rosenberg in [17]. Their  motivation was to find a ring theoretical frame work to study the representations theory of some important ``small algebras"  such as the first Heisenberg algebra, Weyl algebras and the universal enveloping algebra of the Lie algebra $sl(2)$. In [17, 15], Rosenberg and Lunts introduced \emph{ hyperbolic categories} which  are basically generalizations of the categories of representations of generalized Weyl algebras.

In [18], we introduced Weyl cluster algebras which are  non-commutative algebras generated by cluster variables produced from cluster-like  structures which are formed, by mutations, from (possibly infinitely many) copies of generalized Weyl algebras.

Several attempts have been made to introduce  ``categorifications" for cluster algebras, taking into account the different ways of defining the notion of categorification. In [13, 14], cluster algebras of certain finite  types were realized as Grothendick rings of  categories of representations of some quantum affine algebras. Another type of categorification of cluster algebras  was introduced in [5], which is Caldero-Chapoton map. In [4], \emph{cluster category }$\mathcal{C}(Q)$ was introduced for any finite quiver $Q$ with neither loops nor two cycles. The Caldero-Chapoton map $X_{T}$ is a map from  $\mathcal{C}(Q)$ to the ring of Laurent polynomials over $\mathbb{Z}$ in the initial cluster variables associated to $Q$. It sends certain indecomposable objects in $\mathcal{C}(Q)$ to cluster variables such that its  image  generates the cluster algebra $A(Q)$.

 In this paper we provide a similar type of categorification for  Weyl cluster algebras. We introduce a categorical version of Weyl preseeds called \emph{categorical preseeds} and a ``functorial" version of preseeds mutations called \emph{categorical mutations}, see Definitions 3.5 and Definition 3.9 respectively. Every categorical preseed generates an ambient category, called \emph{mutation category}, which is generated by (possibly infinitely many) hyperbolic categories. \emph{Clustered hyperbolic categories}  are, by definition, the full subcategories of mutation categories  such that each object  appears in only one categorical preseed that is mutationally equivalent to the initial categorical preseed, Definition 3.14 (3). A technique of identifying clustered hyperbolic categories as subcategories of mutation categories, is provided through combinatorial tools introduced in this paper, called \emph{zigzag presentations}. Which is a presentation that encodes the relations between the expressions of the \emph{skew Laurent objects}, which are the categorical dual of the cluster variables. Clustered hyperbolic categories are introduced in this paper as   ``categorifications" of Weyl cluster algebras.  That is, we define a  map from each clustered hyperbolic category to its associated Weyl cluster algebra such that the image  of the map generates the Weyl cluster algebra. In the following we summarize the main  statements of this article.

\begin{thm}  Every mutation category contains a clustered hyperbolic category as a full subcategory.
\end{thm}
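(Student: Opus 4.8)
The plan is to produce, inside the given mutation category, an explicit full subcategory whose objects satisfy the uniqueness condition of Definition 3.14(3), and then to check that this subcategory inherits the structure of a clustered hyperbolic category. First I would fix a categorical preseed $\mathcal{P}$ (Definition 3.5) generating the mutation category $\M$, and record the full mutation class $\{\mathcal{P}_\gamma\}$ of categorical preseeds obtained from $\mathcal{P}$ by iterated categorical mutations (Definition 3.9). Each $\mathcal{P}_\gamma$ carries its own hyperbolic category, and $\M$ is the ambient category assembled from these pieces, with the categorical mutations acting as the functors that glue adjacent pieces. The essential point is that a single ``cluster slot'' is populated, across the various preseeds, by a family of objects that the gluing functors relate, so that an object of $\M$ may legitimately be counted as belonging to several preseeds at once.

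The core step is to break this redundancy by a consistent choice, and this is exactly the role of the zigzag presentations. I would use the zigzag presentation to label each skew Laurent object by the mutation sequence at which it first appears, thereby assigning to every object of $\M$ a canonical host preseed. Selecting, for each preseed $\mathcal{P}_\gamma$, only those objects whose canonical host is $\mathcal{P}_\gamma$ produces a collection $S$ of objects each of which appears in exactly one preseed of the mutation class. Because a full subcategory is determined by its objects, the full subcategory $\cC_S \subseteq \M$ spanned by $S$ is automatically well defined; I would then verify that $S$ is nonempty and in fact contains, for each $\gamma$, a complete copy of the hyperbolic category attached to $\mathcal{P}_\gamma$, so that $\cC_S$ is generated by copies of categories each equivalent to $\Rep(\mathcal{H}(p))$, as required of a clustered hyperbolic category.

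Finally I would check against Definition 3.14(3) that $\cC_S$ qualifies: by construction every object of $\cC_S$ appears in only one mutationally equivalent categorical preseed, the morphism sets are inherited unchanged (fullness), and the categorical mutations restrict to the gluing data on $\cC_S$. The main obstacle I anticipate is the consistency of the labeling across the whole, possibly infinite, mutation class. I must ensure that the ``first appearance'' assignment furnished by the zigzag presentation is well defined and compatible whenever two distinct mutation sequences lead to the same preseed, so that no object is inadvertently assigned to two hosts and no slot is left empty. Establishing that the zigzag relations are precisely the relations needed to make this assignment coherent is the technical heart of the argument; once it is in place, nonemptiness and the full-subcategory property follow directly.
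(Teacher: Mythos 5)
Your overall strategy --- use the zigzag combinatorics to make a canonical choice that breaks the redundancy --- is the right instinct and matches the paper's, but your implementation has two gaps. First, the ``first appearance'' host assignment is exactly the technical content you cannot defer: to make it well defined you need that every cluster class contains initial objects at all, that the zigzag presentation has height at most one, that the number of initial objects is finite, and above all that each cluster class has a \emph{unique} right (or left) end initial object. These are precisely Proposition 3.20, Lemma 3.22 and Corollary 3.23 of the paper (together with the normal form for mutation sequences in Corollary 3.13), and without them ``the mutation sequence at which an object first appears'' has no meaning, since the mutation class carries no intrinsic order and distinct sequences reach the same preseed. Your closing paragraph concedes this consistency issue but leaves it open; in the paper it is the heart of the proof, established by tracking the multiplicities of the morphisms $\varepsilon^{\pm 1}_{k,-}$ and $\xi^{\pm 1}_{k,-}$ in the expressions of the skew Laurent morphisms under repeated mutation.

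Second, and more seriously, your selection mechanism does not meet Definition 3.14(3) even granting the labeling. The condition there is that for each object $(f',M,f)$ of $\mathfrak{C}$ the pair $(M,f)$ \emph{is an object of} one and only one skew Laurent category in the mutation class --- a fact about the preseeds, not about bookkeeping. If $(M,f)$ occurs in the skew Laurent categories of two distinct preseeds $\mathcal{S}$ and $\mathcal{S}'$ mutationally equivalent to $\mathcal{P}$, declaring $\mathcal{S}$ its ``host'' and retaining it only there does not remove it from $\mathcal{S}'$; passing to a full subcategory of $\mathcal{H}(\mathcal{P})$ changes nothing at the preseed level. The paper avoids this by surgery on the initial data rather than on the ambient object set: it defines $\mathcal{A}_{0}\langle\Theta\rangle$ as the full subcategory of $\mathcal{A}\langle\Theta\rangle$ whose objects $(M,\eta)$ are exactly the right end initial objects of their cluster classes $[(M,\eta)]_{0}$, forms the new categorical preseed $\mathcal{P}_{0}=(\Theta,\xi,\mathcal{A}_{0}\langle\Theta\rangle)$, and takes $\mathfrak{C}(\mathcal{P})$ to consist of all hyperbolic objects of $\mathcal{P}_{0}$, so the whole mutation class is rebuilt from a duplication-free root. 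Uniqueness of the host preseed is then proved separately (Lemma 3.25): two hosts would either force two distinct right end initial objects in one zigzag, contradicting Corollary 3.23, or force a repetition $\mu^{R}_{i_{1}}\cdots\mu^{R}_{i_{j}}(M,\eta)=(M,\eta)$, which by explicit computation would make some $\eta_{i_{t}}$ equal to the identity morphism or to $\xi_{i_{t}}$, a contradiction. Your argument would need to be restructured along these lines --- prune the initial preseed, not the objects of the mutation category --- before the verification against Definition 3.14(3) can go through; your further claim that the selected collection contains a complete copy of the hyperbolic category of every preseed in the class of $\mathcal{P}$ is likewise unjustified without this re-rooting, since those hyperbolic categories are exactly where the redundant objects live.
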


Every Weyl preseed $p$ gives rise to a \emph{categorical preseed} $\mathcal{P}$ which is used to generate a mutation category  $\mathcal{H(P)}$. A specific clustered hyperbolic category $\mathfrak{C}(\mathcal{W}_{0})$, as a subcategory of $\mathcal{H(P)}$, is introduced in Theorem 3.28.

\begin{thm} The clustered hyperbolic category  $\mathfrak{C}(\mathcal{W}_{0})$ is generated by  equivalent hyperbolic categories; each one of them is equivalent to the category  $\mathcal{H}(p)$-mod, where $\mathcal{H}(p)$ is the Weyl cluster algebra generated from $p$.
\end{thm}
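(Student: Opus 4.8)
The plan is to decompose $\mathfrak{C}(\mathcal{W}_0)$ along the mutation class of its defining categorical preseed and then to compare each component with $\mathcal{H}(p)$-mod. First I would unwind the construction of $\mathfrak{C}(\mathcal{W}_0)$ from Theorem 3.28: by definition it is a full subcategory of the mutation category $\mathcal{H}(\mathcal{P})$ in which every object lies in exactly one categorical preseed that is mutationally equivalent to $\mathcal{W}_0$. This yields a natural family of full subcategories $\{\mathcal{H}_i\}$, indexed by the preseeds $\mathcal{W}_i$ in the mutation class of $\mathcal{W}_0$, where $\mathcal{H}_i$ is the hyperbolic category attached to $\mathcal{W}_i$. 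Since the mutation category is generated by its component hyperbolic categories (Definition 3.9 and the surrounding construction), I would verify that $\mathfrak{C}(\mathcal{W}_0)$ is generated precisely by the subfamily $\{\mathcal{H}_i\}$, invoking the one-preseed-per-object condition to guarantee that no additional objects are introduced.

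Second, I would produce the equivalences among the $\mathcal{H}_i$. Here the functoriality of categorical mutation is the essential input: each categorical mutation $\mu_k$ is a functor between the hyperbolic category attached to a preseed and the one attached to its $k$-th mutation, and I would check that it is an equivalence. The argument is that mutations are involutive, so applying $\mu_k$ in the mutated direction supplies a quasi-inverse; verifying $\mu_k \circ \mu_k \simeq \mathrm{id}$ up to natural isomorphism, together with full faithfulness on morphisms (which follows because categorical mutation only relabels the skew Laurent objects according to the zigzag presentation), shows that each $\mu_k$ is an equivalence. Composing mutations along any path in the mutation graph joining $\mathcal{W}_i$ to $\mathcal{W}_j$ then produces an equivalence $\mathcal{H}_i \simeq \mathcal{H}_j$, independence of the chosen path being automatic since all the functors involved are equivalences onto the same target.

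Third, I would identify the base component $\mathcal{H}_0$, attached to the initial categorical preseed $\mathcal{P}$ arising from $p$, with $\mathcal{H}(p)$-mod. Because hyperbolic categories are, by construction (following Rosenberg and Lunts [17, 15]), generalizations of the categories of representations of generalized Weyl algebras, and $\mathcal{H}(p)$ is the Weyl cluster algebra built from $p$, the category $\mathcal{H}_0$ should be exactly the category of representations of the generalized Weyl structure underlying $\mathcal{H}(p)$. I would make this explicit by matching the defining data of $\mathcal{H}_0$ — its objects, the twisting automorphisms, and the distinguished central elements — with the generators and relations of $\mathcal{H}(p)$, so that $\mathcal{H}_0 \simeq \mathcal{H}(p)$-mod. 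Combined with the equivalences of the second step, this gives $\mathcal{H}_i \simeq \mathcal{H}(p)$-mod for every $i$, completing the proof.

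The main obstacle I anticipate is the second step: establishing that the categorical mutation functors are genuine equivalences rather than merely functors. This hinges on showing that mutating and then mutating back recovers the original hyperbolic category up to natural isomorphism, which depends delicately on the compatibility between categorical mutation and the hyperbolic structure, namely the interplay of the twisting automorphisms with the central elements. The zigzag presentations should serve as the right bookkeeping device to track how the skew Laurent objects transform under iterated mutation and to establish this involutivity cleanly.
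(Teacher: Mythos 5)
Your overall architecture is sound, and your second step genuinely diverges from the paper: you propose to prove that the categorical mutation functors themselves are equivalences (via involutivity, Lemma 3.12(2)) and then transport the identification of the base component around the mutation class, whereas the paper never establishes equivalences between the component hyperbolic categories by mutation functors at all. Instead it works at the algebra level: by Theorem 2.13(3) the Weyl cluster algebra $\mathcal{H}(p)$ is isomorphic to $H^{R}(s)$ and $H^{L}(s)$ for \emph{every} preseed $s$ mutationally equivalent to $p$, so all the module categories $\mathcal{H}(s)$-mod are equivalent at once, and each is identified with the component $\mathfrak{C}^{R}(\mathcal{S})$ (respectively $\mathfrak{C}^{L}(\mathcal{S})$) by the explicit strip/adorn functors $\mathcal{F}_{s}\colon W\mapsto(\mu^{R}(\overline{\beta}),W,\overline{\beta})$ and $\mathcal{G}_{s}$, both the identity on morphisms. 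Your route is workable --- indeed the mutation functors are nearly isomorphisms by construction, since the morphism sets of a mutated skew Laurent category are \emph{defined} as $\Theta_{k}(\mathrm{Mor.}(-,-))$ with $\Theta_{k}$ an auto-equivalence --- so you have misplaced the anticipated difficulty: the involutivity bookkeeping you worry about is the easy, essentially definitional part.

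The genuine gap is in your third step. You assert that $\mathcal{H}_{0}\simeq\mathcal{H}(p)$-mod ``should'' follow by matching the hyperbolic data with the generators and relations of $\mathcal{H}(p)$, on the grounds that hyperbolic categories generalize representation categories of generalized Weyl algebras. But $\mathcal{H}(p)$ is by definition generated by the \emph{entire} cluster set $\mathcal{X}(p)$, which is typically infinite, while a hyperbolic object over the initial preseed records the action of only finitely many elements: the $\alpha_{i}$ and their one-step mutations, i.e.\ exactly an $H^{R}(p)$-module structure. A direct matching of data therefore fails a priori; what bridges the mismatch is precisely Theorem 2.13(3), the nontrivial finite-generation result that every cluster variable already lies in $H^{R}(s)$ (via the normal forms $\xi_{k}^{a}x_{k}^{\pm1}\xi_{k}^{-b}$), so that $\mathcal{H}(p)\cong H^{R}(p)$ and the full $\mathcal{H}(p)$-action on an object is recoverable from its hyperbolic data. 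Your proposal never invokes this input, and without it the comparison functor from $\mathcal{H}(p)$-mod to $\mathcal{H}_{0}$ has no reason to be essentially surjective. A smaller omission: you take the one-preseed-per-object (clustered) property of $\mathfrak{C}(\mathcal{W}_{0})$ as given, whereas the paper must prove it, using the algebraic independence of the initial cluster $\alpha$ to rule out mutationally equivalent objects in $\mathcal{A}_{p}\langle\Theta\rangle$ (Remarks 3.17 and 3.27, resting on Lemma 3.25); if you are constructing $\mathfrak{C}(\mathcal{W}_{0})$ as part of the theorem, that verification belongs in your proof.
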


\begin{thm}
Let  $\mathfrak{C}$ be a  clustered hyperbolic subcategory of $\mathcal{H(P)}$. Then there is a map $\mathbf{F}_{p}: \text{Obj.}\mathfrak{C}\longrightarrow \mathcal{H}(p) $  such that  image of $\mathbf{F}_{p}$ generates $\mathcal{H}(p)$.
\end{thm}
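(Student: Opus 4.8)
The plan is to construct $\mathbf{F}_p$ from the correspondence between skew Laurent objects and cluster variables, and then to observe that this correspondence already surjects onto a generating set. I would first fix the decomposition of the clustered hyperbolic subcategory $\mathfrak{C} \subseteq \mathcal{H}(\mathcal{P})$ into its hyperbolic components. Each such component sits over a categorical preseed $\mathcal{W}$ that is mutationally equivalent to the initial preseed $\mathcal{W}_0$, and, by the structural description of Theorem 1.2 applied to that preseed, is equivalent to $\mathcal{H}(p)$-mod. On the algebraic side, each $\mathcal{W}$ determines a seed of $\mathcal{H}(p)$, hence a cluster whose variables are distinguished elements of $\mathcal{H}(p)$. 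Since the skew Laurent objects of $\mathcal{W}$ are by definition the categorical duals of precisely these cluster variables, I would define $\mathbf{F}_p$ on each skew Laurent object to be the cluster variable dual to it.

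To extend $\mathbf{F}_p$ to an arbitrary object $X$ of $\mathfrak{C}$, I would use its zigzag presentation. Every object lies in a unique categorical preseed mutationally equivalent to $\mathcal{W}_0$ --- this is the defining requirement of a clustered hyperbolic category (Definition 3.14(3)) --- and within that preseed $X$ acquires a skew Laurent expression relating it to the skew Laurent objects. I would set $\mathbf{F}_p(X)$ to be the element of $\mathcal{H}(p)$ obtained by substituting the corresponding cluster variables into this expression. The uniqueness of the ambient preseed is exactly what makes the assignment unambiguous, since there is no second preseed against which $X$ could be read off differently.

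The generation claim then follows formally. Every cluster variable of $\mathcal{H}(p)$ lies in a cluster reached from the initial one by a finite sequence of mutations; running the matching sequence of categorical mutations yields a preseed mutationally equivalent to $\mathcal{W}_0$ that $\mathfrak{C}$ meets (being generated by the hyperbolic components over all such preseeds), and the skew Laurent object dual to that cluster variable is an object of $\mathfrak{C}$ mapping onto it. Thus the image of $\mathbf{F}_p$ contains every cluster variable, and since the cluster variables generate the Weyl cluster algebra $\mathcal{H}(p)$ by construction, the image of $\mathbf{F}_p$ generates $\mathcal{H}(p)$.

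The step I expect to carry the real weight is not the generation count but the compatibility that makes the skew-Laurent-to-cluster-variable correspondence correct across mutations. One must show that a categorical mutation of a skew Laurent object mirrors the non-commutative exchange relation governing the corresponding cluster variables, so that the object produced categorically is genuinely dual to the mutated variable; the relations recorded by the zigzag presentation are the mechanism for this, and verifying that they match the exchange relations of $\mathcal{H}(p)$ is where the argument is genuinely delicate.
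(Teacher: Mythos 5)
Your outline does follow the paper's strategy at the top level---an object-to-cluster-variable correspondence, well-definedness via the uniqueness of the ambient preseed (the paper's Lemma 3.25), and generation via surjectivity onto the cluster set---but the definition of the map is left vague at exactly the point where the paper has to do real work, and as stated your generation argument fails. An object of $\mathfrak{C}$ is a hyperbolic object $(\overline{\beta}',M,\overline{\beta})$ whose skew Laurent part carries $n$ morphisms $\overline{\beta}_{1},\ldots,\overline{\beta}_{n}$; it is dual to an entire cluster, not to a single cluster variable, so for $n>1$ ``the cluster variable dual to it'' is undefined. The natural repair---send the object to the (ordered) product of the $n$ dual cluster variables---is essentially what the paper does, but it breaks your surjectivity claim: an object mutated in only the $k$-th direction would then map to a product such as $\alpha_{1}\cdots z\cdots \alpha_{n}$, and since the initial variables $\alpha_{i}$ are not invertible in $\mathcal{H}(p)$ you cannot isolate $z$; that such length-$n$ products generate $\mathcal{H}(p)$ is neither obvious nor proved. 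The paper resolves precisely this with its \emph{step back map} $f_{p}$: coordinatewise, a morphism obtained by $m$ mutations in direction $k$ is sent to the cluster variable obtained by one fewer mutation, and in particular an unmutated (initial) coordinate is sent to $1\in\mathcal{H}(p)$. With $\mathbf{F}_{p}(\overline{\beta}',M,\overline{\beta})=\prod_{i=1}^{n}f_{p}(\overline{\beta}_{i})$, the object whose $k$-th coordinate is mutated $q+1$ times while all other coordinates are initial maps exactly to the cluster variable $\mu_{k^{\pm q}}(\alpha_{k})$, and similarly for cluster monomials. Your proposal contains no mechanism playing the role of this normalization, so your sentence ``the skew Laurent object dual to that cluster variable is an object of $\mathfrak{C}$ mapping onto it'' does not follow from your definition.

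Conversely, the step you flag as carrying the real weight---verifying that categorical mutations mirror the non-commutative exchange relations of $\mathcal{H}(p)$---is not needed at all: $\mathbf{F}_{p}$ is merely a map of sets on objects, with no functoriality or homomorphism property asserted, so there are no relations to check. What does carry weight is the input you correctly cited in your second paragraph: the uniqueness of the categorical preseed containing each skew Laurent object (Lemma 3.25, resting on the zigzag machinery and the uniqueness of mutation sequences in Corollary 3.13), which is what makes the integer $m$ in the step back map---i.e., the mutation sequence producing each $\overline{\beta}_{k}$---unambiguous and hence $\mathbf{F}_{p}$ well defined. So you identified the right well-definedness ingredient but mislocated the remaining difficulty, and the missing step-back normalization is a genuine gap rather than a cosmetic omission.
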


Full versions of Theorems 1.1-1.3 are available in Lemma 3.24, Theorem 3.28 and Theorem 3.29 respectively.

The paper is organized as follows. Section $2$ is devoted for basic definitions of Weyl cluster algebras. In Section 3, we introduce the notion of categorical preseeds, categorical mutations and clustered hyperbolic categories. In the same section we  introduce, \emph{hyperbolic objects} and the zigzag presentations and some of their properties are given in Proposition 3.20 and Lemma 3.22. In Theorem 3.28 we provide a relation between the clustered hyperbolic category $\mathfrak{C}(\mathcal{W}_{0})$ and the category of representations of its associated Weyl cluster algebra. In Theorem 3.29, we introduce a map from any clustered hyperbolic subcategory of the mutation category $\mathcal{H(P)}$ to the associated Weyl cluster algebra $\mathcal{H}(p)$.

 Throughout the paper, $K$ is a field of zero characteristic and the notation $[1, k]$ stands for the set $\{1,\ldots, k\}$. All our categories are small with non-empty sets of objects, $Obj.\mathcal{A}$ stands for the set of all objects of the category $\mathcal{A}$ and  $Mor._{\mathcal{A}}(M, M')$ denotes all morphisms in the category $\mathcal{A}$ from the object $M$ to the object $M'$. Let $D$ be an associative $K$-algebra with a non-trivial center $Z(D)$. Then, the group of all automorphisms of $D$ over the filed of zero characteristic  $K$ will be denoted by $Aut._{K} (D)$. The functor $Id_{\mathcal{A}}$ is the identical functor of the category $\mathcal{A}$.


\section{Weyl cluster algebras}

\subsection{Generalized Weyl algebras}

\begin{defn}[Generalized Weyl algebra (1, 17)] Let $D$ be an associative $K$-algebra with $\varepsilon =\{\varepsilon _{1},\ldots, \varepsilon _{n}\}$ be a fixed set of elements of the center of $D$ and $\theta =\{\theta _{1},\ldots, \theta _{n}\} $ be
a set of ring  automorphisms  such that $\theta_{i} (\varepsilon _{j})=\varepsilon _{j}$ for all $i\neq j$. \emph{The generalized Weyl algebra} of degree $n$, denoted by $D_{n}\{\theta, \varepsilon \}$, is defined to be the  ring extension of  $D$ generated by the  $2n$ indeterminates  $x_{1}, \ldots, x_{n}; y_{1}, \ldots,y _{n}$ modulo the commutation relations
\begin{equation}\label{}
    x_{i}r=\theta_{i}(r)x_{i} \ \ \text {and} \ \ ry_{i}= y_{i}\theta_{i}(r), \ \ \forall i\in [1,n], \ \forall r\in R,
   \end{equation}
\begin{equation}\label{}
  x_{i}y_{i}=\theta_{i}(\varepsilon _{i}), \ y_{i}x_{i}=\varepsilon _{i},  \ \forall i\in [1,n],
     \end{equation}
     and
\begin{equation}\label{}
   \  x_{i}y_{j}=y_{j}x_{i}, \ \ x_{i}x_{j}=x_{j}x_{i} \ \text{and} \ y_{i}y_{j}=y_{j}y_{i},\ \ \forall i\neq j \in [1,n].
\end{equation}
We warn the reader that $x_{i}y_{i}\neq y_{i}x_{i}$ in general.
\end{defn}

\begin{exam} [6, 1,  17] Let $A_{n}$ be the $n^{th}$ Weyl algebra  generated by  $x_{1}, \ldots, x_{n},\\y_{1}\ldots, y_{n}$ over  $K$ subject to the  relations
\begin{equation}\label{}
   x_{i}y_{i}-y_{i}x_{i}=1, \  \text{and} \ \ x_{i}x_{j}=x_{j}x_{i}, \ \ y_{i}y_{j}=y_{j}y_{i} \ \ \text{for} \ \ i \neq j, \ \forall i, j \in [1, n].
\end{equation}
 Let $\varepsilon _{i}=y _{i}x _{i}$ and $D$ be the ring of polynomials $K[\varepsilon _{1},\ldots, \varepsilon _{n}]$ and $\theta_{i}: R\rightarrow R$, induced by $\varepsilon _{i}\mapsto \varepsilon _{i}+1, \varepsilon _{j}\mapsto \varepsilon _{j}, j\neq i,\ \text{for all} \ i, j\in [1, n]$. It is known that $A_{n}$ is isomorphic to the generalized Weyl algebra $D_{n}\{\theta, \varepsilon \}$.
\end{exam}

\begin{exam}[17] The coordinate algebra $A(SL_{q}(2,k))$ of algebraic quantum group $SL_{q}(2, k)$ is the $K$-algebra generated by  $x, y, u$, and $v$ subject to the following relations
\begin{equation}\label{}
   qux=xu, \ \ qvx=xv, \ \ qyu=uy, \ \ qyv=vy, \ \ uv=vu, \ \ q\in K \backslash \{0\}
\end{equation}
\begin{equation}\label{}
   xy=quv+1, \ \  \text{and} \ \ yx=q^{-1}uv+1.
\end{equation}
$A(SL_{q}(2,k))$ is isomorphic to the generalized Weyl algebra $D_{1} \{\theta, \varepsilon \}$, where  $D$ is the algebra of polynomials $K[u,v]$; $\varepsilon =1+q^{-1}uv$ and $\theta$ is an automorphism of $D$, defined by $\theta (f(u,v))=f(qu,qv)$ for any polynomial $f(u,v)$.

\end{exam}

\subsection{Weyl cluster algebras}
This subsection provides a brief introduction to Weyl cluster algebras, introduced in [18]. We start with a simpler version of the definition of \emph{preseeds} [18, Definition 3.2]  which serves the purpose of this article.

\begin{defn} [Preseeds]

\begin{enumerate}
  \item

Let  $\mathbb{P}$ be a finitely  generated (free) abelian group, written multiplicatively,  with set of  generators
 \begin{equation}\label{}
   F=\bigcup^{n}_{i=1}F_{i} \ \ \text{where} \ \   F_{i}=\{f_{i1},\ldots,f_{im_{i}}\} \ \text{for some natural numbers} \ m_{i}, i \in [1, n].
 \end{equation}
 Let $\textit{R}=K[\mathbb{P}]$ be the  group ring of $\mathbb{P}$ over $K$.  Let $\mathcal{D}_{n}=R(t_{1}, \cdots, t_{n})$ be the skew-filed of rational functions in $n$ (commutative) variables  over $R$, where $t_{1}, \ldots, t_{n}$ do not necessarily commute with the elements of the coefficients ring $R$. However, we assume that $\mathcal{D}_{n}$ is an Ore domain.

 The set of algebraically independent rational functions  $X=\{x_{1}, \cdots, x_{n}\}$ which generate $ \mathcal{D}_{n}$ is called a \emph{cluster}  if the following condition is satisfied

   \begin{equation}\label{}
   x_{i}x_{j}=x_{j}x_{i} \ \ \text{and} \ \ x_{i}f_{jr}=f_{jr}x_{i},\ \ \text{for every} \  i \neq j\in [1, n],  \  \text{for all} \ \ r\in[1, m_{j}];
 \end{equation}

 Note that: For every $i\in [1, n]$ the variable $x_{i}$ does not necessarily commute with  elements from the set $F_{i}$.

  \item The triple $p=(X, \theta, \xi)$ is called a \emph{Weyl preseed} of rank $n$ in $\mathcal{D}_{n}$ if we have the following

   \begin{enumerate}
   \item $X=\{x_{1}, \ldots, x_{n}\}$ is a cluster in  $\mathcal{D}_{n}$;

      \item $\theta=\{\theta_{1}, \ldots, \theta_{n}\}$ be a set of $n$ automorphisms of $\textit{R}$ such that

       \begin{equation}\label{}
    x^{\pm 1}_{k}f=\theta_{i}^{\pm 1} (f)x^{\pm 1}_{i}, \ \ \forall f \in F_{i},  \forall i \in[1, n];
\end{equation}

\item $\xi=\{\xi_{1}, \ldots, \xi_{n}\}$ is a subset of $\textit{R}$ such that for every $i\in[1,n], \xi_{i}$ is a binomial (sum of two monomials) in the  elements of $F_{i}$. The set $\xi$ will be called the set of  \emph{exchange binomials} of $p$.

   \end{enumerate}

\end{enumerate}

\end{defn}

For information about Ore domains we refer to [16, 3]. In the following, we will omit the word Weyl from the expression Weyl preseeds and  all  preseeds are of rank $n$ unless stated otherwise. Also for simplicity we will use $\mathcal{D}$ for the skew-field of fractions instead of $\mathcal{D}_{n}$.

\begin{defn}[Preseeds   mutations] Let $p=(X,\theta , \xi)$  be a preseed  in  $\mathcal{D}$. For each  $k\in [1,n]$, two new triples  $\mu^{R}_{k}(p)=(\mu_{k}^{R}(X), \hat{\theta}_{k}, \xi)$ and  $\mu^{L}_{k}(p)=(\mu_{k}^{L}(X), \hat{\theta}_{k}, \xi)$ can be obtained from $p$ as follows

\begin{itemize}
  \item (Right mutation)
   \begin{equation}\label{}
  \mu^{R}_{k}(x_{i})=\left\{
    \begin{array}{ll}
      \xi_{i} x^{-1}_{i}, & i=k; \\
      x_{i}, & i\neq k.
    \end{array}
  \right.
 \end{equation}

  \item (Left mutation)

   \begin{equation}\label{}
  \mu^{L}_{k}(x_{i})=\left\{
    \begin{array}{ll}
      x^{-1}_{i}  \xi_{i}, & i=k; \\
      x_{i}, & i\neq k.
    \end{array}
  \right.
 \end{equation}

\item $\hat{\theta}_{k}=(\theta_{1},\ldots, \theta^{-1}_{k},\ldots, \theta_{n})$.

\end{itemize}

   \end{defn}

\begin{prop} [18] Let $p=(X,\theta , \xi)$  be a  preseed    in  $\mathcal{D}$. Then the following are true
\begin{enumerate}
\item
  For any sequence of right mutations (respectively left)  $\mu^{R}_{i_{1}}\mu^{R}_{i_{2}}\ldots \mu^{R}_{i_{q}}$, we have
     $\mu^{R}_{i_{1}}\mu^{R}_{i_{2}}\ldots \mu^{R}_{i_{q}}(p)$  (respectively $\mu^{L}_{i_{1}}\mu^{L}_{i_{2}}\ldots \mu^{L}_{i_{q}}(p)$) is again a  preseed;
 \item For every $k \in [1, n]$,
 \begin{equation}\label{}
    \mu^{R}_{k} \mu^{L}_{k}(p)=\mu^{L}_{k}\mu^{R}_{k}(p)=p.
 \end{equation}
\end{enumerate}
\end{prop}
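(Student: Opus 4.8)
\section*{Proof proposal}

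The plan is to treat the two parts separately: the first by reducing a sequence of mutations to a single mutation, the second by a direct computation in the skew field $\mathcal{D}$. For part (1), a composite of preseed-preserving operations is preseed-preserving, so by induction on the length $q$ of the sequence it suffices to show that one right mutation $\mu^{R}_{k}$, and symmetrically one left mutation $\mu^{L}_{k}$, sends a preseed $p=(X,\theta,\xi)$ to a triple $\mu^{R}_{k}(p)=(\mu^{R}_{k}(X),\hat{\theta}_{k},\xi)$ that again satisfies conditions (a)--(c) of Definition 2.4(2). The exchange binomials are untouched by a mutation, so (c) is immediate; the content lies in checking that $\mu^{R}_{k}(X)$ is a cluster and that $\hat{\theta}_{k}$ is compatible with the new cluster.

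For the cluster axiom I would first record that $R=K[\mathbb{P}]$ is commutative, since $\mathbb{P}$ is abelian; hence $\xi_{k}$ and every generator $f_{jr}$ commute with one another as elements of $R$. Writing $x_{k}'=\xi_{k}x_{k}^{-1}$ and $x_{i}'=x_{i}$ for $i\neq k$, the commutation relations among indices all different from $k$ are inherited from $p$. For a pair involving $k$, say $x_{k}'$ against $x_{j}$ with $j\neq k$, I would push $\xi_{k}$ and $x_{k}^{-1}$ past $x_{j}$ one factor at a time, using $x_{j}\xi_{k}=\xi_{k}x_{j}$ (valid because $x_{j}$ commutes with each $f_{kr}$ by the relation $x_{i}f_{jr}=f_{jr}x_{i}$, $i\neq j$) together with $x_{j}x_{k}^{-1}=x_{k}^{-1}x_{j}$; the same bookkeeping yields $x_{k}'f_{jr}=f_{jr}x_{k}'$ for $j\neq k$. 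Generation of $\mathcal{D}$ is clear since $x_{k}=(x_{k}')^{-1}\xi_{k}$ recovers the old variable from the new one and $R$, and algebraic independence of $\mu^{R}_{k}(X)$ persists because this recovery is invertible over the coefficient ring, leaving the transcendence data unchanged. For the twisting relation I would compute, for $f\in F_{k}$,
$$x_{k}'f=\xi_{k}x_{k}^{-1}f=\xi_{k}\,\theta_{k}^{-1}(f)\,x_{k}^{-1}=\theta_{k}^{-1}(f)\,\xi_{k}x_{k}^{-1}=\theta_{k}^{-1}(f)\,x_{k}',$$
where I use $x_{k}^{-1}f=\theta_{k}^{-1}(f)x_{k}^{-1}$ and the commutativity of $R$; this is exactly the relation of Definition 2.4(2)(b) for the mutated cluster with the entry $\theta_{k}^{-1}$ of $\hat{\theta}_{k}$, and the inverse-sign version follows by inverting. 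The left mutation is identical, with $x_{k}^{-1}\xi_{k}$ replacing $\xi_{k}x_{k}^{-1}$ throughout.

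For part (2) I would compute both composites directly, keeping in mind that $\xi$ is fixed under mutation and that $\xi_{k}$, being a nonzero element of the skew field $\mathcal{D}$ (a genuine binomial in the units $f_{kr}$), is invertible. Applying $\mu^{L}_{k}$ then $\mu^{R}_{k}$ at position $k$ gives $\mu^{R}_{k}(x_{k}^{-1}\xi_{k})=\xi_{k}(x_{k}^{-1}\xi_{k})^{-1}=\xi_{k}\xi_{k}^{-1}x_{k}=x_{k}$, and in the opposite order $\mu^{L}_{k}(\xi_{k}x_{k}^{-1})=(\xi_{k}x_{k}^{-1})^{-1}\xi_{k}=x_{k}\xi_{k}^{-1}\xi_{k}=x_{k}$; the entries $x_{i}$ with $i\neq k$ are left alone, and the automorphism entry returns to $\theta_{k}$ because $(\theta_{k}^{-1})^{-1}=\theta_{k}$. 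Hence $\mu^{R}_{k}\mu^{L}_{k}(p)=\mu^{L}_{k}\mu^{R}_{k}(p)=p$.

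I expect the only delicate point to be the generation-and-independence clause of the cluster axiom, which lives in a noncommutative skew field where ``algebraically independent'' must be read with care; the commutation and twisting identities, by contrast, are short manipulations resting on little more than the commutativity of $R$ and the defining relations of $p$.
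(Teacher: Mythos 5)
You should first be aware that the paper contains no proof of this proposition: it is stated with the citation [18] (the author's earlier paper on cluster structures on generalized Weyl algebras), and the argument is imported from there. So there is no in-text proof to compare yours against; what you have written supplies a verification the paper leaves implicit. Judged on its own, your proposal is correct in outline and in most details, and it is surely the intended argument: induction reduces part (1) to a single mutation; commutativity of $R=K[\mathbb{P}]$ together with the cluster relations $x_{j}f_{kr}=f_{kr}x_{j}$ ($j\neq k$) gives the commutation axioms for $\mu^{R}_{k}(X)$; the frozen $\xi$ makes condition (c) trivial; and part (2) is the one-line cancellation $\mu^{R}_{k}(x_{k}^{-1}\xi_{k})=\xi_{k}\xi_{k}^{-1}x_{k}=x_{k}$ (and its mirror) together with $(\theta_{k}^{-1})^{-1}=\theta_{k}$, exactly consistent with the computations displayed in Example 2.7 of the paper.

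Two points need repair. First, ``the inverse-sign version follows by inverting'' is not quite right as stated: inverting $x_{k}'f=\theta_{k}^{-1}(f)x_{k}'$ yields $f^{-1}(x_{k}')^{-1}=(x_{k}')^{-1}\theta_{k}^{-1}(f^{-1})$, which is the desired relation with $\theta_{k}(f)$ in place of $f$, and that substitution is legitimate only if $\theta_{k}$ stabilizes the group generated by $F_{k}$ --- which is not among the preseed axioms (indeed, this is precisely why Definition 2.4(2)(b) postulates both signs rather than deriving one from the other). The fix is immediate and symmetric to your first display: use the plus-sign hypothesis of the original preseed, $(x_{k}')^{-1}f=x_{k}\xi_{k}^{-1}f=x_{k}f\xi_{k}^{-1}=\theta_{k}(f)x_{k}\xi_{k}^{-1}=\theta_{k}(f)(x_{k}')^{-1}$, so that each sign for the mutated variable comes from one sign of (2.9) for $p$. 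Second, the algebraic-independence clause of the cluster axiom is asserted rather than proved: ``this recovery is invertible over the coefficient ring, leaving the transcendence data unchanged'' is a gesture, not an argument, in an Ore skew field where transcendence notions are subtle. You rightly flag this as the delicate point, but a complete proof would either invoke the corresponding statement of [18] or argue concretely, e.g.\ that any polynomial relation over $R$ among $\mu^{R}_{k}(X)$ transforms, after substituting $x_{k}'=\xi_{k}x_{k}^{-1}$ and normalizing monomials via the commutation and twisting rules you have already established, into a nontrivial relation among $X$. With these two repairs the proposal is a sound, self-contained proof of both parts.
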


\begin{defn}[Cluster sets and exchange graphs]
\begin{enumerate}

\item
Let $p$ be a preseed  in $\mathcal{D}$. An element $y \in \mathcal{D}$ is said to be a \emph{cluster variable} of $p$  if $y$  is  an element in some  cluster $Y$ of some preseed $s=(Y, \theta, \xi)$ which is obtained from  $p$  by applying some sequence of (right or left) mutations. The set of all  cluster variables of $p$  is called   \textit{the  cluster set} of $p$  and is denoted by  $\mathcal{X }(p)$. The elements of the cluster of $p$ are called \emph{initial cluster variables}.

\item The \emph{exchange graph} of a preseed $p$, denoted by $\mathbb{G}(p)$, is the $n$-regular graph whose vertices are labeled by the preseeds that can be obtained from $p$ by applying some sequence of right or left mutations and whose edges correspond to mutations. Two adjacent preseeds in $\mathbb{G}$ can be obtained from each other by applying right mutation $\mu^{R}_{k}$ or left mutation $\mu^{L}_{k}$   for some $k\in [1, n]$.
\end{enumerate}

\end{defn}

\begin{exam} Let $p$ be the  rank 1 preseed $(\{x_{1}\},\theta_{1}, \xi_{1})$ with $F_{1}=\{f_{1}\}$, $\xi_{1}=1+m(f_{1})$ and $m(f_{1})$ is a monomial in $f_{1}$,  $\theta_{1}$ is an $R$-automorphism, where $\textit{R}=K[ f^{n}_{1}; n \in \mathbb{Z}]$ and $\mathcal{D}=\textit{R}(x_{1})$. Applying mutation at   $x_{1}$ produces the following cluster variables

\begin{eqnarray*}
   x_{1}&\stackrel{\mu^{R} _{k}}{\Rightarrow}& \xi_{1}x_{1}^{-1} \\
   &\stackrel{\mu^{R} _{k}}{\Rightarrow}& \xi_{1}x_{1}\xi_{1}^{-1} \\
      &\stackrel{\mu^{R} _{k}}{\Rightarrow}& \xi_{1}^{2}x^{-1}_{1}\xi_{1}^{-1} \\
      &\stackrel{\mu^{R} _{k}}{\Rightarrow}& \xi_{1}^{2}x_{1}\xi_{1}^{-2} \\   &\ldots &\\
  &\stackrel{\mu^{R} _{k}}{\Rightarrow}&\xi_{1}^{k+1}x^{-1}_{1}\xi_{1}^{-k}\\
  &\stackrel{\mu^{R} _{k}}{\Rightarrow}&\xi_{1}^{k+1}x_{1}\xi_{1}^{-(k+1)}\\
   &\ldots &,\\
\end{eqnarray*}
and
\begin{eqnarray*}
 x_{1}&\stackrel{\mu^{L} _{k}}{\Rightarrow}& x_{1}^{-1}\xi_{1}\\
   &\stackrel{\mu^{L} _{k}}{\Rightarrow}& \xi_{1}^{-1}x_{1}\xi_{1} \\
      &\stackrel{\mu^{L} _{k}}{\Rightarrow}& \xi_{1}^{-1}x^{-1}_{1}\xi_{1}^{2} \\
      &\stackrel{\mu^{L} _{k}}{\Rightarrow}& \xi_{1}^{-2}x_{1}\xi_{1}^{2} \\
   &\ldots &\\
  &\stackrel{\mu^{L} _{k}}{\Rightarrow}&\xi_{1}^{-k}x^{-1}_{1}\xi_{1}^{k+1}\\
  &\stackrel{\mu^{L} _{k}}{\Rightarrow}&\xi_{1}^{-(k+1)}x_{1}\xi_{1}^{k+1}\\
   &\ldots &.\\
\end{eqnarray*}

So we have the  infinite  cluster  set
\begin{equation}\label{}
 \nonumber \mathcal{X } (p)=\{x_{1}, \xi_{1}^{k+1}x_{1}^{-1}\xi_{1}^{-k}, \xi_{1}^{k}x_{1}\xi_{1}^{-k},\xi_{1}^{-k}x_{1}^{-1}\xi_{1}^{k+1},\xi_{1}^{-k}x_{1}^{-1}\xi_{1}^{k}, k \in \mathbb{Z}\}.
\end{equation}

\end{exam}

In the following example we will see that every generalized Weyl algebra gives rise to a preseed.

\begin{exam} Let $D_{n} \{\theta, \xi\}$ be a generalized  Weyl algebra.  Consider the triple   $p=(Y, \xi, \theta )$, where the set of exchange binomials  $\xi=\{\xi_{i};  i\in [1, n]\}$ and $Y=\{y_{1}, \ldots, y_{n}\}$. From the properties of the $R$-automorphisms $\theta=(\theta_{1}, \ldots, \theta_{n})$  given in Equations (2.1) and (2.2)  one can see that $\theta_{i}$ satisfies Equation (2.9)   for each $i\in [1, n]$ which makes $p$  a preseed in   $\mathcal{D}$, where  $\mathcal{D}$ is the division  ring of rational functions in $y_{1}, \ldots, y_{n}$ over the ring $\textit{R}=K[\mathbb{P}]$. In particular, in the case of  the $n^{th}$ Weyl algebra  $A_{n}$, one can see that the  skew-field of rational functions $\mathcal{D}=R(y_{1}, \ldots, y_{n})$  is an Ore domain.
\end{exam}

\begin{exam} Recall  the coordinate algebra $A(SL_{q}(2,k))$ of the algebraic quantum group $SL_{q}(2, k)$, Example 2.3. Let $F_{1}=\{qu, v\}$.  Consider the rank 1 preseed  $p=(\{x\}, \{\theta\}, \{\zeta\})$, where  $\theta: R\rightarrow R$ given by  $\theta (f(u,v))=f(qu,qv)$ and $\zeta=quv+1$. One can see that $p$ is a  preseed in the division ring $\mathcal{D}=K[\mathbb{P}](x)$, where $\mathbb{P}$ is the free abelian group generated by $F_{1}$.   The cluster set of $p$ is given by

\begin{equation}\label{}
  \mathcal{X }(p)=\{x,\zeta^{j}x\zeta^{-j}, \zeta ^{j+1}x^{-1}\zeta ^{-j-1}, j\in
\mathbb{N} \}\cup\{y,\zeta^{j}y\zeta^{-j}, \zeta ^{j+1}y^{-1}\zeta ^{-j-1},
j\in \mathbb{N}\}
\end{equation}

\end{exam}

\begin{defn}[Weyl cluster algebras]  Let  $p=(X, \xi, \theta)$  be a preseed in $\mathcal{D}$. The Weyl cluster algebra $\mathcal{H}(p)$  is defined to be the  $\textit{R}$-subalgebra of $\mathcal{D}$  generated by the cluster set  $\mathcal{X} (p)$.
\end{defn}

The following remark and theorem shed some light on the structure of the Weyl cluster algebra $\mathcal{H}(p)$. Remark 2.12 and first part of  Theorem 2.13 can be phrased as following: The Weyl cluster algebra $\mathcal{H}(p)$ is generated by $R$ and many (could be infinitely many) isomorphic copies of generalized Weyl algebras, each vertex in the exchange graph of $p$ gives rise to two copies of them.

\begin{rem and def} Let $p=(X, \theta, \xi)$ be a Weyl preseed and $D=K[\xi_{1}, \ldots, \xi_{n}]$ be the ring of polynomials in $\xi_{1}, \ldots, \xi_{n}$ where $\xi_{i}, i=1,\ldots, n$ are as defined in Example 2.9. Then $p$ gives rise to two copies of generalized Weyl  algebras of rank $n$, as follows
\begin{enumerate}
  \item [(a)] $H^{R}(p)$ is the ring extension of $R$ generated by $\mu^{R}_{1}(x_{1}), \ldots, \mu^{R}_{n}(x_{n}), x_{1}, \ldots, x_{n}$.
  \item [(b)] $H^{L}(p)$ is the ring extension of $R$ generated by $x_{1}, \ldots, x_{n}, \mu^{L}_{1}(x_{1}), \ldots, \mu^{L}_{n}(x_{n})$.
  \item [(c)] In particular, if $p=(X, \xi, \theta)$ is the preseed given in Example 2.9, then each of $H^{R}(p)$ and $H^{L}(p)$ is isomorphic to $D_{n}\{\theta, \xi\}$ as generalized Weyl algebras.
\end{enumerate}

\end{rem and def}

\begin{thm} Let $p=(X, \xi, \theta)$ be a Weyl preseed in $\mathcal{D}$. Then the following are true
\begin{enumerate}

\item Right and left mutations on $p$ induce isomorphisms between the generalized Weyl algebras $H^{R}(p)$ and $H^{R}(\mu^{R}_{k}(p))$ (respectively $H^{L}(p)$ and $H^{L}(\mu^{L}_{k}(p))$).
\item The  Weyl cluster algebra $\mathcal{H}(p)$ is a subring of the (non-commutative)  ring of Laurent polynomials in the initial exchange cluster variables with coefficients from ring of polynomials $R[\theta^{\pm 1}_{1}(\xi^{-1}_{1}), \ldots,  \theta^{\pm 1}_{n}(\xi^{-1}_{n})]$.
\item The Weyl cluster algebra $\mathcal{H}(p)$ is finitely generated and is isomorphic to each of $H^{R}(s)$ and $H^{L}(s)$  for every preseed $s$ mutationally equivalent to $p$.
\end{enumerate}
\end{thm}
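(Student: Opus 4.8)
The plan is to exploit the generalized Weyl algebra (GWA) structure recorded in Remark and Definition 2.12 and to prove the three items in the given order, since each relies on the previous one. For item (1), I would first make explicit the identification of $H^{R}(p)$ with the GWA $D_{n}\{\theta,\xi\}$: the initial cluster variables $x_{i}$ serve as the ``$x$''-generators and $y_{i}:=\mu^{R}_{i}(x_{i})=\xi_{i}x_{i}^{-1}$ as the ``$y$''-generators, since $y_{i}x_{i}=\xi_{i}$ and, by the commutation relations (2.9), $x_{i}y_{i}=x_{i}\xi_{i}x_{i}^{-1}=\theta_{i}(\xi_{i})$, which matches (2.2) with $\varepsilon_{i}=\xi_{i}$. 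The essential observation is that $\mu^{R}_{k}$ alters only the $k$-th coordinate: it replaces $x_{k}$ by $\xi_{k}x_{k}^{-1}$, inverts $\theta_{k}$ to $\theta_{k}^{-1}$ (this is precisely $\hat\theta_{k}$), and leaves $\xi$ unchanged, so that $H^{R}(\mu^{R}_{k}p)=D_{n}\{\hat\theta_{k},\xi\}$. I would then define the isomorphism to be the identity on the generators indexed by $i\neq k$ together with the standard GWA ``swap'' $x_{k}\leftrightarrow y_{k}$ on the $k$-th pair, twisted by $\theta_{k}^{-1}$ on the coefficient ring so as to match the exchange data; checking that this map respects (2.1)--(2.3) is a direct computation. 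The left-mutation statement is symmetric, and composing these isomorphisms along a mutation word relates the local GWAs of any two mutationally equivalent preseeds.

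For item (2), I would establish a normal form for cluster variables: every $y\in\mathcal{X}(p)$ can be written as $c\,x^{m}$, a single Laurent monomial $x^{m}=x_{1}^{m_{1}}\cdots x_{n}^{m_{n}}$ in the initial cluster variables with coefficient $c$ lying in $R[\theta_{1}^{\pm1}(\xi_{1}^{-1}),\ldots,\theta_{n}^{\pm1}(\xi_{n}^{-1})]$. This is proved by induction on the length of the mutation word producing $y$: the base case is the initial cluster, and the inductive step applies one of the mutation formulas of Definition 2.5 and then uses (2.9) to move each coefficient factor to the left past the relevant $x_{k}^{\pm1}$, an operation that introduces only the allowed factors $\theta_{k}^{\pm1}(\xi_{k}^{-1})$. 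The mechanism is already transparent in Example 2.7, where $\xi_{1}^{k}x_{1}\xi_{1}^{-k}$ and $\xi_{1}^{k+1}x_{1}^{-1}\xi_{1}^{-k}$ normalize to $(\xi_{1}\theta_{1}(\xi_{1}^{-1}))^{k}x_{1}$ and $\xi_{1}(\xi_{1}\theta_{1}^{-1}(\xi_{1}^{-1}))^{k}x_{1}^{-1}$ respectively. Since $\mathcal{H}(p)$ is by definition the $R$-subalgebra generated by these normal-form elements, it embeds in the skew Laurent polynomial ring over the stated coefficient ring.

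For item (3), the normal form shows that only finitely many ``shapes'' of generator occur: the coefficient ring $R[\theta_{i}^{\pm1}(\xi_{i}^{-1})]$ is finitely generated over $K$, while every cluster variable has degree $\pm1$ in each $x_{i}$; hence $\mathcal{H}(p)$ is generated by the finite set consisting of the $x_{i}^{\pm1}$, the $\xi_{i}x_{i}^{-1}$, and the finitely many coefficient generators, which gives finite generation. I would then identify $\mathcal{H}(p)$ with $H^{R}(p)$ read over this coefficient ring and transport the identification to each mutationally equivalent $s$ along the chain of GWA isomorphisms from item (1), obtaining $\mathcal{H}(p)\cong H^{R}(s)\cong H^{L}(s)$. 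I expect item (3) to be the main obstacle: the cluster set is genuinely infinite (Example 2.7), so the real content is that these infinitely many cluster variables collapse onto finitely many generators, and making this rigorous requires careful bookkeeping of the coefficient ring $R[\theta_{i}^{\pm1}(\xi_{i}^{-1})]$ together with a verification that $\mathcal{H}(p)$ closes up into a single generalized Weyl algebra rather than a strictly larger subring of $\mathcal{D}$.
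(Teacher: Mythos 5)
Your treatment of items (1) and (2) is consistent with the paper, which simply cites [18, Theorem 4.12] for both parts; your direct identification of $H^{R}(p)$ with a generalized Weyl algebra (via $y_{i}=\xi_{i}x_{i}^{-1}$, $y_{i}x_{i}=\xi_{i}$, $x_{i}y_{i}=\theta_{i}(\xi_{i})$) and your induction to the normal form $c\,x_{k}^{\pm 1}$ with $c\in R[\theta_{k}^{\pm 1}(\xi_{k}^{-1})]$ are the natural arguments and match the mechanism visible in Example 2.7. The genuine gap is in item (3), at exactly the point you yourself flag as the main obstacle. You conclude finite generation by asserting that $\mathcal{H}(p)$ ``is generated by'' the finite set consisting of the $x_{i}^{\pm 1}$, the $\xi_{i}x_{i}^{-1}$, and the coefficient generators. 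But the normal form only gives the containment of $\mathcal{H}(p)$ in the algebra generated by that set: neither $x_{i}^{-1}$ nor the coefficients $\theta_{i}^{\pm 1}(\xi_{i}^{-1})$ are known to lie in $\mathcal{H}(p)$ (the cluster set contains $\xi_{i}x_{i}^{-1}$, not $x_{i}^{-1}$, and $\xi_{i}$ is a binomial, hence not invertible in the group ring $R$), and a subalgebra of a finitely generated algebra need not be finitely generated. So embedding $\mathcal{H}(p)$ into a finitely generated skew Laurent ring, which is what your item (2) actually establishes, proves nothing about finite generation of $\mathcal{H}(p)$ itself. Likewise, your plan to transport an identification of $\mathcal{H}(p)$ with $H^{R}(p)$ along the chain of isomorphisms from item (1) presupposes $\mathcal{H}(p)\cong H^{R}(p)$, which is precisely the unproved ``closing up'' statement; item (1) only relates $H^{R}(p)$ to $H^{R}(\mu^{R}_{k}(p))$ and says nothing about the entire (infinite) cluster set sitting inside a single $H^{R}(s)$.

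The paper closes this by a membership argument you do not have. From [18], every cluster variable $y$ obtained from $x_{k}$ by a mutation sequence of length $l$ has the closed form $\xi_{k}^{\frac{l+1}{2}}x_{k}^{-1}\xi_{k}^{-(\frac{l+1}{2}-1)}$ or $\xi_{k}^{-(\frac{l+1}{2}-1)}x_{k}^{-1}\xi_{k}^{\frac{l+1}{2}}$ for $l$ odd, and $\xi_{k}^{\pm\frac{l}{2}}x_{k}\xi_{k}^{\mp\frac{l}{2}}$ for $l$ even. If the sequence producing $s$ from $p$ contains $m$ copies of $\mu^{R}_{k}$, the generators of $H^{R}(s)$ have the same shape with $l$ replaced by $m$, and the paper observes that whether $m\geq l$ or $m<l$, the variable $y$ is obtained from a generator of $H^{R}(s)$ by multiplying on the left and right by $\xi_{k}^{\pm q}$ for a suitable natural number $q$. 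This places every cluster variable inside every $H^{R}(s)$ and $H^{L}(s)$; since the generators of $H^{R}(s)$ are themselves cluster variables of $p$, this identifies $\mathcal{H}(p)$ with $H^{R}(s)$, and finite generation follows because $H^{R}(s)$ is generated over $R$ by $2n$ elements. This comparison of the exponent bookkeeping of an arbitrary cluster variable against the generators of one fixed mutationally equivalent preseed is the missing idea; without it (or some substitute), your item (3) does not go through.
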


\begin{proof} Proofs of Parts (1) and (2) are provided in [18, Theorem 4.12].\\
To prove Part (3), we only need to prove that  the generators of the algebra $\mathcal{H}(p)$ are also elements in the algebras $H^{R}(s)$ and $H^{L}(s)$. The algebra $\mathcal{H}(p)$ is generated by the set of all cluster variables that are obtained from the initial cluster variables $x_{1}, \ldots, x_{n}$. We will show that every cluster variable generated from $x_{k}, k=1, \ldots, n$ by applying some sequence of mutations is already an element of $H^{R}(s)$ and $H^{L}(s)$. From the proof of Part (3) in Theorem 4.12 in [18], a cluster variable $y$ that obtained from  $x_{k}$ by applying a sequence of mutations (right or left) of length $l$, can be written in the form

\begin{equation}\label{}
   y=\left\{
                  \begin{array}{ll}
                    \xi_{k}^{\frac{l+1}{2}}x_{k}^{-1}\xi_{k}^{-(\frac{l+1}{2}-1)} \ \text{or} \ \ \xi_{k}^{-(\frac{l+1}{2}-1)}x_{k}^{-1}\xi_{k}^{\frac{l+1}{2}}, \  \text{if} \ l  \ \text{is an odd number;}& \\
                     \xi_{k}^{\frac{l}{2}}x_{k}\xi_{k}^{-\frac{l}{2}} \ \text{or} \  \  \xi_{k}^{-\frac{l}{2}}x_{k}\xi_{k}^{\frac{l}{2}}, \ \   \text{if} \ l \ \text{is an even number.}&
                  \end{array}
                \right.
\end{equation}

Assume that the sequence of mutations that creates $s$ from $p$ contains $m$ copies of $\mu^{R}_{k}$ (respect to $\mu^{L}_{k}$), then the set of generators of the hyperbolic algebra $H^{R}(s)$ (respect to $H^{L}(s)$) contains elements of the form  $\xi_{k}^{\frac{m+1}{2}}x_{k}^{-1}\xi_{k}^{-(\frac{m+1}{2}-1)}$ and $\xi_{k}^{\frac{m}{2}}x_{k}\xi_{k}^{-\frac{m}{2}}$ (respect to $\xi_{k}^{-(\frac{m+1}{2}-1)}x_{k}^{-1}\xi_{k}^{\frac{m+1}{2}}$ and $ \xi_{k}^{-\frac{m}{2}}x_{k}\xi_{k}^{\frac{m}{2}})$. One can see that whether $m \geq l$ or $m< l$, $y$ can be obtained from the generators of $H^{R}(s)$  (respect to $H^{L}(s)$) by multiplying them from left and right by $\xi^{\pm q}_{k}$ for some natural number $q$.

\end{proof}

\begin{exam}[Weyl cluster algebra associated to first Weyl algebra] Recall the $n^{th}$ Weyl algebra  and the associated preseed  given in Example 2.9. Let $A_{1}$ be the first Weyl algebra and consider the preseed
$p_{1}=(\{y\}, \{\varepsilon\}, \theta \})$. Here $R=K [\mathbb{P}]$, where $\mathbb {P}$ is the cyclic group generated by $\varepsilon=yx$. Then We have the following exchange graph
\begin {itemize}
\item $\mathbb{G}(p_{1})$
\begin{equation}\label{}
    \xymatrix{ \ldots  \ar[r]_{L}&\ar[r]^{R} \stackrel{y_{-3}}{\cdot} \ar[l]_{R}&\ar[r]^{R} \stackrel{y_{-2}}{\cdot} \ar[l]^L &\ar[r]^{R} \stackrel{y_{-1}}{\cdot}\ar[l]^{L} &\ar[r]_{L} \stackrel{y_{0}=y}{\cdot}\ar[l]^{L} &\ar[l]_{R}\stackrel{y_{1}}{\cdot}\ar[r]^{R} &\ar[l]^L\stackrel{y_{2}}{\cdot}\ar[r]^R&\ar[l]^L\stackrel{y_{3}}{\cdot}\ar[r]^R & \ar[l]^{L} \ldots
    },
\end{equation}
\end {itemize}
(here $ \xymatrix{ & {\cdot}\ar[l]^{L}}$ is left mutation and $
\xymatrix{{\cdot}\ar[r]^R &}$ is right mutation). Which can be encoded by
the following equations
\begin{equation}\label{}
 y_{2k}y_{2k\pm 1}=y_{2k\pm 1}y_{2k}+1, \ \ \ \text{for} \ \  k\in \mathbb{Z}.
\end{equation}
The  Weyl cluster algebra  $\mathcal{H}(p_{1}(y))$ is the $R$-subalgebra of $\mathcal{D}$ generated by the set of cluster variables $\{y_{k}, k\in \mathbb{Z}\}$. Relations (2.16) can be interpreted as follows,  each double heads arrow in $\mathbb{G}(p_{1})$ corresponds to a copy of  first Weyl algebra, denoted by $A^{k}_{1}=K\langle y_{k}, y_{k+1}\rangle, k\in \mathbb{Z}$ and right (respectively left) mutations define isomorphisms between the adjacent copies, given by $T_{k}:A^{k}_{1}\rightarrow A^{k+1}_{1},  y_{k}\mapsto y_{k+1} $ for $k\in \mathbb{Z}$ (respectively to the inverses of $T_{k}, k\in \mathbb{Z}$).
\end{exam}

\section{Clustered hyperbolic categories}

\subsection{Hyperbolic category}
For a category $\mathcal{A}$, an auto-equivalence $\Phi$ is an invertible functor on  $\mathcal{A}$ making it equivalent to itself.  A natural transformation $\tau: Id_{\mathcal{A}}\rightarrow Id_{\mathcal{A}}$ is said to be an endomorphism of the identical functor of $\mathcal{A}$ if for every object $W$ there is a morphism $\tau_{W}\in Hom_{\mathcal{A}}(W, W)$ such that for any $W'$ in $Obj.\mathcal{A}$ and every $f \in Hom_{\mathcal{A}}(W, W')$ we have $\tau_{W'} \circ f=f \circ \tau_{W}$. Consider the following examples
\begin{enumerate}
  \item [(a)] Let $D$ be an associative $K$-algebra with non-trivial center. Then every automorphism $\phi$ of $D$ gives rise to an auto-equivalence $\Phi$ of the category $D$-mod of all modules over $D$,  where the module structure on  $\Phi(W)$ is given by
\begin{equation}\label{}
 \nonumber  \ \ d \cdot_{\Phi(W)}w=\phi(d)\cdot_{W}w, \ \ d \in D,  w \in W.
\end{equation}
  \item [(b)] Every element $\varsigma \in Z(D)$ defines an endomorphism $\bar{\varsigma}$ of the identical functor of the category $D$-mod,  as follows

\begin{equation}\label{}
 \nonumber \bar{\varsigma}_{W}:W \rightarrow W, \ \text{given by} \ \ \bar{\varsigma}(w)=\varsigma\cdot w.
\end{equation}
\end{enumerate}

\begin{defn}[Hyperbolic category (15, 17)] Let $\mathcal{A}$ be an additive category with a set of $n$-auto-equivalences
$\Theta =\{\Theta _{1},\ldots, \Theta _{n}\}$ and another set of $n$-endomorphisms $\xi=\{\xi_{1},\ldots, \xi_{n}\}$ of the identical functor of $\mathcal{A}$. Consider the endomorphism  $\varepsilon_{i}$ of the identical functor of $\mathcal{A}$ given by
\begin{equation}\label{}
\varepsilon_{i}:=\Theta^{-1}_{i} (\xi_{i}), \ \  i\in [1, n].
\end{equation}\textit{The hyperbolic category} of rank $n$ on  $\mathcal{A}$, denoted by  $\mathcal{A}_{n}\{\Theta, \xi\}$,  is defined as follows:
 Objects  are triples $(\gamma, M, \eta)$  where $M$ is an object in $\mathcal{A}$ and $\gamma$ and $\eta$ are two sets of   $\mathcal{A}$-morphisms   such that  $\gamma=\{\gamma _{1},\ldots, \gamma _{n}\}$,
  $\eta =\{\eta _{1},\ldots, \eta_{n}\}$ where
\begin{equation}\label{}
   \nonumber \gamma _{i}: M\longrightarrow \Theta_{i} (M) \ \ \ \text{and} \ \ \ \eta _{i}:\Theta_{i} (M)\longrightarrow M, \ \  i \in [1,n]
\end{equation}
given by
\begin{equation}\label{}
    \eta _{i} \circ \gamma _{i}=\xi _{i, M} \ \ \ \text{and} \ \  \ \gamma _{i}\circ \eta _{i}=\varepsilon_{i,\Theta_{i} (M)}, \ \ \ \forall i \in [1, n].
\end{equation}

A  morphism from the object  $(\gamma, M, \eta)$ to the object $(\gamma', M',
\eta')$ is  the set  $f=\{f_{1}, \ldots, f_{n}\}$ where for  $i\in [1,n]$,  $f_{i}$ is in $Mor._{\mathcal{A}}(M, M')$, such that  the following diagram is commutative
\begin{equation}\label{}
   \xymatrix{
 M \ar[d]^{f_{i}} \ar[r]^{\gamma_{i}} & \ar[d]^{\Theta_{i}(f_{i})}\Theta_{i} (M)\ar[r]^{\eta _{i}}& \ar[d]^{f_{i}} M\\
  M'  \ar[r]^{\gamma'_{i}}  &  \Theta_{i} (M')\ar[r]^{\eta'_{i}} & M'.}
\end{equation}

\end{defn}

\begin{rem}  Let $M$ be an object in $\mathcal{A}$ and $id_{M}$ be its identity morphism. Then the $n$-tuple $(id_{M}, \cdots, id_{M})$ is  a morphism in the hyperbolic category $\mathcal{A}_{n}\{\Theta, \xi\}$ if and only if  the object $(\gamma, M, \eta) \in Obj.\mathcal{A}_{n}\{\Theta, \xi\}$  has unique sets of  $\mathcal{A}$-morphisms $\gamma$ and $\eta$ as defined  in (3.2) above. In particular, the object $M$ in $\mathcal{A}$ will appear exactly once as the object $(-, M, -)$  in   $\mathcal{A}_{n}\{\Theta, \xi\}$.
\end{rem}

\begin{exam} Let $\mathrm{D}_{1}\{\theta, \xi\}$ be a generalized Weyl algebra  with indeterminants $x$ and $y$. Let $\mathcal{A}$ be the full subcategory of the category $\mathrm{D}-mod$ with objects are $\mathrm{D}_{1}\{\theta, \xi\}$-modules, forgetting about the  actions of $x$ and $y$.  The category  $\mathrm{D}_{1}\{\theta, \xi\}$-modules is equivalent to a  hyperbolic category  $\mathcal{A}_{1}\{\Theta, \xi\}$ where $\Theta: \mathcal{A}\longrightarrow \mathcal{A}$ is induced by the  $\mathrm{R}$-automorphism $\theta$ and for $M \in$ objets of $\mathcal{A}$,  $\xi_{M}:M\longrightarrow M$ is given by $\xi_{M}(m):=\xi m$. Objects of  $\mathcal{A}_{1}\{\Theta, \xi\}$ are the triples $(\overline{x}, M, \overline{y})$ where $M$ is an object in $\mathcal{A}$,
 $\overline{x}:M\longrightarrow \theta (M)$ given by $\overline{x}(m)=xm$
 and $\overline{y}:\theta (M)\longrightarrow M$ given by
 $\overline{y}(m)=ym$.
\end{exam}

\begin{exam}[Trivial hyperbolic category] Let $D_{n}\{\theta, \xi\}$ be a generalized Weyl algebra. Consider the  additive category $\mathcal{B}$ with only one object which is the ring $D$ and  $Hom_{\mathcal{B}}(D, D)$  is $D$. One can form a hyperbolic category $\mathcal{\dot{B}}(\Theta, \xi)$ with one object which is $(\overline{x}, D, \overline{y})$, where $\overline{x}=\{x_{1},\cdots, x_{n}\}$ and $\overline{y}=\{y_{1},\cdots, y_{n}\}$ and $\Theta$ is induced from $\theta$.
\end{exam}

\subsection{Categorical preseeds}
Let $\mathcal{A}$ be an additive category.
\begin{defn} [Categorical preseeds]  A \emph{categorical preseed} of rank $n$ in $\mathcal{A}$ is the  data $\mathcal{P}=(\Theta, \xi, \mathcal{A}\langle\Theta\rangle)$ where
\begin{enumerate}
\item $\Theta=\{\Theta _{1}, \ldots, \Theta _{n}\}$ is a set of  $n$  auto-equivalences in $\mathcal{A};$
\item $\xi=\{\xi_{1},\ldots, \xi_{n}\}$ is set of  $n$  endomorphisms of the identical functor of $\mathcal{A}$;
\item $\mathcal{A}\langle\Theta\rangle$ is the following category:

 Objects are  pairs $(M, f)$, where $M$ is an object in $\mathcal{A}$ and $f=\{f _{1},\ldots,f _{n}\}$ is
    a set of  $n$ invertible elements of $Mor._{\mathcal{A}}(\Theta (M), M)$.

Morphisms from $(M, f)$ to $(M', f ')$ in $\mathcal{A}\langle\Theta\rangle$  are the $n$-tuples $h=\{h_{1},\ldots , h_{n}\}$ where $h_{i}\in  Mor._{\mathcal{A}}(M ,M'), i\in [1, n]$ such that
the following diagram is commutative
\begin{equation}\label{}
    \xymatrix{ \ar[d]_{\Theta_{i}(h_{i})}\Theta_{i} (M)\ar[r]^{f _{i}}& \ar[d]^{h_{i}} M\\
    \Theta_{i} (M')\ar[r]^{f'_{i}} & M'.}
\end{equation}

\end{enumerate}
\end{defn}

\begin{rem} \begin{enumerate} \item Let  $\mathcal{P}=(\Theta, \xi, \mathcal{A}\langle\Theta\rangle)$  be a categorical preseed. Since   $\xi$ is a set of endomorphisms of $Id_{\mathcal{A}}$, then for every object $(M, f)$ in $\mathcal{A}\langle\Theta\rangle$ we have

\begin{equation}\label{}
    f _{i} \circ \xi _{i, \Theta_{i} (\Theta _{j_{k}}\ldots \Theta _{j_{1}}(M))}=\xi_{i,\Theta _{j_{k}}\ldots \Theta _{j_{1}}(M)}\circ f _{i}; \ \ \ \forall i,   j_{k},\ldots, j_{1}\in [1,n].
\end{equation}

\item  The category $\mathcal{A}\langle\Theta\rangle $, defined in Definition 3.5, was  introduced in [17] under the name of \emph{skew Laurent category}. The objects of  $\mathcal{A}\langle\Theta\rangle $ will be called the \emph{skew Laurent objects} of $\mathcal{P}$.

\end{enumerate}

\end{rem}

\begin{exam} Let $D$ be an associative ring with an automorphism $\theta$. Consider the ring $D[x, x^{-1}]$ of skew Laurent polynomials in $x$, subject to the relation
\begin{equation}\label{}
 \nonumber  rx=x\theta(r) \ \ \text{for every} \   r \in D.
\end{equation}

 The category $D[x, x^{-1}]$-mod is equivalent to the skew Laurent  category $(D-mod)\langle\Theta\rangle$ where  $\Theta$ is the automorphism functor of $D$-mod induced by $\theta$. The objects of the category $(D-mod)\langle\Theta\rangle$ are the pairs $(M, \overline{x})$ where $M$ is an object in $D$-mod and $\overline{x}:\Theta (M)\rightarrow M$ given by $\overline{x}(m)=xm$. Moreover, we can define a  categorical preseed of rank $1$ in the category $D$-mod as follows, let $z$ be  a nonunit element in the center of $D$, so for every $M$, an object of $D[x, x^{-1}]$-mod, let $\xi_{M}:M\rightarrow M$ be the module homomorphism  given by $\xi(m)=zm$, for any $m$ in $M$. One can see that the triple  $\mathcal{P}=(\Theta, \xi, \mathcal{A}\langle\Theta\rangle)$ is a categorical pressed of rank 1.
\end{exam}

\begin{exam}[\emph{Weyl categorical preseed}] Let $D_{n}\{\theta, \xi\}$ be a generalized Weyl algebra  in  the indeterminates  $x_{1},\ldots,x_{n}; y_{1},\ldots,y_{n}$ with $\theta =\{\theta_{1},\ldots,\theta_{n}\}\subset Aut(D)$ and $\{z_{1},\ldots, z_{n}\}\subset Z(D)$, as in Definition 2.1. Let $A=S^{-1}\mathrm{D}[y_{1}^{\pm 1},\ldots,y_{n}^{\pm 1}]$, where $S=\{z_{i}, \theta_{i} (z_{i}); \  i\in [1,n]\}$. Let $\mathfrak{A}=D-mod$, and $\mathcal{A}=\varphi_{*}(\mathfrak{A})$, where $\varphi_{*}:\mathfrak{A}\longrightarrow \mathrm{R}-mod$, the functor that sends each object in $\mathfrak{A}$ to itself as a $D$-module forgetting the rest of the actions of $A$. Let $\mathcal{W}=(\Theta, \xi, \mathcal{A}\langle\Theta\rangle)$ such that $\Theta=\{\Theta _{1},\ldots, \Theta _{n}\}$ is a set of  $n$ $\mathcal{A}$-auto equivalences where $\Theta _{i}:\mathcal{A}\longrightarrow \mathcal{A}$ is induced by $\theta _{i}$; and $\xi=\{\xi_{1},\ldots,\xi_{n}\}$ is a set of  $n$ endomorphisms of the identity functor of  $\mathcal{A}$ given by $\xi _{i W}: W\longrightarrow W$ where $\xi_{i W}(w)=z_{i}\cdot w, i\in [1,n]$, for any object $W$ in $\mathcal{A}$.

 Objects of $\mathcal{A}\langle\Theta\rangle$ are  pairs $(M, \overline{y})$ such that $M$ is an object of  $\mathcal{A}$ and $\overline{y}=\{\overline{y}_{1},\ldots, \overline{y}_{n}\}$ where  $\overline{y}_{i}:\Theta_{i} (M)\longrightarrow M$, $\overline{y}_{i}(m)=y_{i}\cdot m$ for $i\in [1,n]$ and morphisms of $\mathcal{A}\langle\Theta\rangle$ are given by $Mor._{\mathcal{A}\langle\Theta\rangle}((M,\overline{y}),(M',\overline{y}))=Mor._{\mathfrak{A}}(M,M')$.  This specific hyperbolic categorical pressed $\mathcal{W}$ is called Weyl categorical preseed.

\end{exam}
\subsection{Categorical mutations}

The main purpose of this subsection is to introduce \emph{categorical mutations} which are  ``functorial" versions of the preseeds mutations introduced in [18]. Categorical mutations involve creating a new skew Laurent category from a previous one and a functor between the old and the new   categories.

For any morphism $h$  in $Mor._{\mathcal{A}}(W, W')$ we write $D(h)=W$ and $C(h)=W'$, where $D(h)$ stands for the domain of $h$ and $C(h)$ is the codomain of $h$.

 Let $\mathcal{P}=(\Theta, \xi, \mathcal{A}\langle\Theta\rangle)$ be a categorical preseed of rank $n$ in $\mathcal{A}$. In the following we  introduce length one and length two sequences of categorical mutations before introducing the general rules of categorical mutations.

\begin{defn}[\textbf{\emph{Categorical  mutations}}]
\begin{enumerate}

 \item \textbf{Categorical right mutations.}
\begin{enumerate}
 \item[(a)] \textbf{First generation  categorical right mutations.} For $k\in [1, n]$, the action of \emph{categorical right mutation} $\mu^{R}_{k}$ on $\mathcal{P}$ is given by $\mu^{R}_{k}(\mathcal{P})=(\widehat{\Theta}_{(k)} ^{-1},\xi ,\mathcal{A}_{k^{(1)}}\langle \Theta \rangle)$,  where

      \begin{itemize}
    \item $\widehat{\Theta}_{(k)} ^{-1}=(\Theta _{1}, \ldots, \Theta ^{-1}_{k},\ldots, \Theta _{n})$;

    \item Every object in $\mathcal{A}_{k^{(1)}}\langle\Theta\rangle$ is a mutated object from $\mathcal{A}\langle\Theta\rangle$ as follows

     \begin{equation}\label{}
  \nonumber (M, f) \in   \ Obj.\mathcal{A}\langle\Theta\rangle \ \text{if and only if} \ (\Theta_{k} (M), \mu^{R}_{k}(f_{k}))\in \ \ Obj.\mathcal{A}_{k^{(1)}}\langle\Theta\rangle,
     \end{equation}
     where
     \begin{equation}\label{}
       \mu^{R}_{k}(f)=(f_{1}, \ldots, f_{k-1}, \varepsilon_{k,D(f_{k})}\circ f_{k} ^{-1}, f_{k+1}, \ldots, f_{n})
     \end{equation}

  \item The morphisms $\mathrm{Mor.}_{\mathcal{A}_{k^{(1)}}\langle\Theta\rangle}((\Theta _{k}(M),\mu_{k}(f)),(\Theta _{k}(M'),\mu_{k}(f')))$ are given by $\Theta_{k}(\mathrm{Mor.}_{\mathcal{A}\langle\Theta\rangle}((M,f),(M',f ')))$ which is
\begin{equation}\label{}
\{\Theta_{k}(h); h\in \mathrm{Mor.}_{\mathcal{A}\langle\Theta\rangle}((M,f),(M',f ')) \}.
\end{equation}
     \end{itemize}

 \item[(b)] \textbf{Second generation categorical right mutations.} The second mutation $\mu^{R}_{k^{2}}(\mathcal{P})$   alters $\mathcal{A}_{k^{(1)}}\langle\Theta\rangle$ to the category $\mathcal{A}_{k^{(2)}}\langle\Theta\rangle$  with objects given by

\begin{equation}\label{}
  \textbf{(}M, (f_{1}, \ldots, f_{k-1},\xi_{k, C(f_{k})}\circ f_{k} \circ \varepsilon^{-1}_{k,D(f_{k})} , f_{k+1}, \ldots, f_{n})\textbf{)},
\end{equation}

\end{enumerate}
  \item \textbf{Categorical left  mutations.}
    \begin{enumerate}
    \item[(a)] \textbf{First generation categorical left mutation} $\mu^{L}_{k}(\mathcal{P})$ is given by $\mu^{L}_{k}(\mathcal{P})=(\widehat{\Theta}_{(k)}^{-1}, \xi, \mathcal{A}_{k^{(-1)}}\langle\Theta\rangle)$, where

       \begin{itemize}
    \item Every object in $\mathcal{A}_{k^{(-1)}}\langle\Theta\rangle$ is a mutated object from $\mathcal{A}\langle\Theta\rangle$ as follows

     \begin{equation}\label{}
   \nonumber (M, f) \in  \  Obj.\mathcal{A}\langle\Theta\rangle \ \text{if and only if} \ (\Theta_{k} (M), \mu^{L}_{k}(f_{k}))\in \  Obj.\mathcal{A}_{k^{(-1)}}\langle\Theta\rangle,
     \end{equation}
      where
     \begin{equation}\label{}
       \mu^{L}_{k}(f)=(f_{1}, \ldots, f_{k-1},f_{k}^{-1}\circ \varepsilon_{k, C(f_{k})}, f_{k+1}, \ldots, f_{n}).
     \end{equation}
\item
     The morphisms of $\mathcal{A}_{k^{(-1)}}\langle\Theta\rangle$ are as given  in (3.7).
     \end{itemize}

\item[(b)] \textbf{Second generation categorical left mutations.} The second mutation $\mu^{L}_{k^{2}}(\mathcal{P})$   alters $\mathcal{A}\langle\Theta\rangle$  to $\mathcal{A}_{k^{(-2)}}\langle\Theta\rangle$ with objects given by
\begin{equation}\label{}
  \textbf{(}M, (f_{1}, \ldots, f_{k-1},\varepsilon^{-1}_{k, C(f_{k})}\circ f_{k} \circ \xi_{k,D(f_{k})} , f_{k+1}, \ldots, f_{n})\textbf{)}.
\end{equation}
\end{enumerate}

\item   \textbf{General categorical  mutations rules.} Let $\mathcal{S}=(\Theta, \xi, \mathcal{A}_{k^{(t)}}\langle \Theta \rangle)$ be a triple obtained from the initial  categorical preseed $\mathcal{P}$ by applying some  sequence of categorical mutations $\mu$ which contains  exactly $t$ copies of $\mu^{R}_{k}$  with $t\geq 2$. Applying another $\mu^{R}_{k}$ on $\mathcal{S}$ is governed by the following rules
\begin{enumerate}
\item $\xi$ is  frozen.
\item $\Theta $ is altered by replacing it with $\widehat{\Theta }^{-1}_{(k)}$.
\item The category $\mathcal{A}_{k^{(t)}}\langle \Theta \rangle$ will be replaced by $\mathcal{A}_{k^{(t+1)}}\langle \Theta \rangle$.
    \begin{itemize}
      \item Let $(W, \nu _{k^{(t)}})$ be a generic  object in $\mathcal{A}_{k^{(t)}}\langle \Theta \rangle$ where $\nu_{k^{(t)}}=(\nu_{1},\ldots, \nu_{k})$. Then a random object  $(\mu_{k}(W), \nu _{k^{(t+1)}})$ of   $\mathcal{A}_{k^{(t+1)}}\langle \Theta \rangle$  is given as follows

\begin{equation}\label{}
  \begin{cases} ( \Theta _{k}(W),\nu_{k^{(t+1)}}), \text {with}\\ \nu _{k^{(t+1)}}=(\nu_{1},\ldots,\nu_{k-1},\varepsilon_{k,D(\nu_{k})}\circ \nu_{k} ^{-1},\nu_{k+1},\ldots, \nu_{n}), & \text{ if } t \  \text {is even},\\
         (W, \nu_{k^{(t+1)}}), \text {with}\\\nu _{k^{(t+1)}}=(\nu_{1},\ldots,\nu_{k-1},\xi_{k, C(\nu_{k})}\circ\nu_{k} ^{-1},\nu_{k+1},\ldots, \nu_{n}), & \text{ if } t \  \text {is odd}.
    \end{cases}
    \end{equation}
      \item  Morphisms $Mor._{\mathcal{A}_{k^{(t+1)}}\langle \Theta \rangle}((W, \nu _{k^{(t)}}),(W', \nu ^{'R}_{k^{(t)}})$ are given by

\begin{equation}\label{}
 \begin{cases} \Theta(Mor._{\mathcal{A}_{k^{(t)}}\langle \Theta \rangle}((W, \nu ^{R}_{k^{(t)}}),(W', \nu ^{'R}_{k^{(t)}})), & \text{ if } t \  \text {is even},\\
         Mor._{ \mathcal{A}_{k^{(t)}}\langle \Theta \rangle}((W, \nu ^{R}_{k^{(t)}}),(W', \nu ^{'R}_{k^{(t)}})), & \text{ if } t \  \text {is odd}.
    \end{cases}
    \end{equation}
    \end{itemize}

 \item Let $\mathcal{S}=(\Theta, \xi, \mathcal{A}_{k^{(-t)}}\langle \Theta \rangle)$ be triple  obtained from  $\mathcal{P}$ by applying some  sequence of categorical mutations $\mu$ which contains  exactly $t$ copies of $\mu^{L}_{k}$  with $t\geq 2$.  The categorical left mutations rules for $\xi$ and $\Theta $ are the same as in the right mutations. The left categorical mutations of $\mathcal{A}_{k^{(-t)}}\langle \Theta \rangle$ with objects are the pairs $(W, \nu_{k^{(-t)}})$ such that
$\nu_{k^{(-t)}}=(\nu_{1},\ldots, \nu_{n})$ is replacing it with   $\mathcal{A}_{k^{(-t-1)}}\langle \Theta \rangle$ with  objects  given by

\begin{equation}\label{}
  \begin{cases} ( \Theta _{k}(W),\nu_{k^{(-t-1)}}), \text {with}\\ \nu_{k^{(-t-1)}}=(\nu_{1},\ldots,\nu_{k-1},\nu_{k} ^{-1}\circ \varepsilon_{k,C(\nu_{k})},\nu_{k+1},\ldots, \nu_{n}), & \text{ if } t \  \text {is even},\\
         (W, \nu_{k^{(-t-1)}}), \text {with}\\ \nu_{k^{(-t-1)}}=(\nu_{1},\ldots,\nu_{k-1},\nu^{-1}_{k}\circ \xi_{k,D(\nu_{k})},\nu_{k+1},\ldots, \nu_{n}), & \text{ if } t \  \text {is odd}.
    \end{cases}
    \end{equation}

The morphisms of $A_{k^{(-t-1)}}\langle \Theta \rangle$ are defined the same way as  the morphisms of $A_{k^{(t+1)}}\langle \Theta \rangle$ in (3.12).

   \end{enumerate}
   \end{enumerate}
Two categorical preseeds $\mathcal{P}$ and $\mathcal{S}$ are called \emph{mutationally equivalent} if they  can be obtained from each other by applying some  sequences of categorical mutations. In such case, every skew Laurent object of $\mathcal{P}$ is said to be mutationally equivalent to its associated skew Laurent object of $\mathcal{S}$.
\end{defn}

\begin{rem} \label{rem}

Let $\mathcal{P}$ be a hyperbolic categorical preseed. Then for every $i, j \in [1, n]$ we have

\begin{equation}
   \mu^{R}_{i}\mu^{R}_{j}\mathcal{(S)}=\mu^{R}_{j}\mu^{R}_{i}\mathcal{(S)}, \mu^{L}_{i}\mu^{L}_{j}\mathcal{(S)}=\mu^{L}_{j}\mu^{L}_{i}\mathcal{(S)} \ \text{and} \ \ \mu^{L}_{i}\mu^{R}_{j}\mathcal{(S)}=\mu^{R}_{j}\mu^{L}_{i}\mathcal{(S)}.
\end{equation}

\end{rem}

\begin{exam}[Categorical mutations of Weyl categorical preseed $\mathcal{W}$] In this example we give more precise forms for the objects of  the categories $\mathcal{A}_{k^{(t)}}\langle\Theta\rangle, k\in [1, n], t\in \mathbb{Z}$ of the Weyl categorical preseed $\mathcal{W}$ given in Example 3.8. For every $M\in Obj.(\mathcal{A})$, let  $\varepsilon_{i, M}: M \rightarrow M$ be an endomorphism of the identical functor of $\mathcal{A}$ induced by the element $\epsilon_{i}=\theta^{-1}_{i}(\xi_{i})\in D$ where $\varepsilon_{i, M}(m)=\epsilon_{i}(m), i\in [1,n]$. Let $(M, \overline{y})$ be some (generic) object in $ \mathcal{A}\langle \Theta \rangle$ and let $A_{k^{(t)}}\langle \Theta \rangle$ be a skew Laurent category obtained from the initial skew Laurent category $A\langle \Theta \rangle$ of  $\mathcal{W}$ by applying the sequence of categorical right mutations $\overbrace{\mu^{R}_{k}\cdots \mu^{R}_{k}}^{\text{t-times}}$.
The objects of $A_{k^{(t)}}\langle \Theta \rangle $ are of the form $(M, \overline{y}_{k^{(t)}})$,  $y _{k^{(t)}}=(\overline{y}_{1},\ldots, \overline{y}_{k-1},\overline{y}_{k^{t}},\overline{y}_{k+1},\ldots, \overline{y}_{n})$, where $\overline{y}_{k^{t}}$ is one of the following cases

\begin{equation}\label{}
  \begin{cases} \overline{y}_{k^{t}} \in Mor._{\mathcal{A}\langle \Theta \rangle}(\Theta(M), M);  \ \ \overline{y}_{k^{t}}(m)=\xi_{k}^{\frac{t}{2}}y_{k}\xi^{\frac{-t}{2}}_{k}(m), & \text{ if } t \  \text {is even},\\
          \overline{y}_{k^{t}} \in Mor._{\mathcal{A}\langle \Theta \rangle}(M, \Theta(M)); \ \ \overline{y}_{k^{t}}(m)=\xi_{k}^{\frac{t+1}{2}}y^{-1}_{k}\xi^{-\frac{(t-1)}{2}}_{k}(m), & \text{ if } t \  \text {is odd}.
    \end{cases}
    \end{equation}

Let $A_{k^{(-t)}}\langle \Theta \rangle$ be a skew Laurent category obtained from  $A\langle \Theta \rangle$ of $\mathcal{W}$ by applying the sequence of categorical left mutations $\overbrace{\mu^{L}_{k}\cdots \mu^{L}_{k}}^{\text{t-times}}$. The objects of $A_{k^{(-t)}}\langle \Theta \rangle $ are of the form $(M, \overline{y}_{k^{(-t)}})$, where $y _{k^{(t)}}=(\overline{y}_{1},\ldots, \overline{y}_{k-1},\overline{y}_{k^{-t}},\overline{y}_{k+1},\ldots, \overline{y}_{n})$, where $\overline{y}_{k^{t}}$ is one of the following cases

\begin{equation}\label{}
  \begin{cases} \overline{y}_{k^{-t}} \in Mor._{\mathcal{A}\langle \Theta \rangle}(\Theta(M), M);  \ \ \overline{y}_{k^{-t}}(m)=\xi_{k}^{\frac{-t}{2}}y_{k}\xi^{\frac{t}{2}}_{k}(m), & \text{ if } t \  \text {is even},\\
          \overline{y}_{k^{-t}} \in Mor._{\mathcal{A}\langle \Theta \rangle}(M, \Theta(M)); \ \ \overline{y}_{k^{-t}}(m)=\xi_{k}^{-\frac{t-1}{2}}y^{-1}_{k}\xi^{\frac{t+1}{2}}_{k}(m), & \text{ if } t \  \text {is odd}.
    \end{cases}
    \end{equation}

\end{exam}

\begin{lem} \label{lem} Let  $\mathcal{P}=(\Theta, \xi, \mathcal{A}\langle \Theta \rangle)$ be  categorical preseed of rank $n$ in $\mathcal{A}$. Then the following are true
\begin{enumerate}
\item [(1)] For every sequence of categorical right (respect to left) mutations   $\mu^{R}_{j_{1}}\cdots \mu^{R}_{j_{t}}$, we have  $\mu^{R}_{j_{1}}\cdots\mu^{R}_{j_{t}}(\mathcal{P})$ (respect to $\mu^{L}_{j_{1}}\cdots \mu^{L}_{j_{t}}(\mathcal{P}))$ is again a categorical preseed;
\item [(2)]
    For every $k\in [1,n]$, we have  $\mu^{R}_{k}\mu^{L}_{k}(\mathcal{P})=\mu^{L}_{k}\mu^{R}_{k}(\mathcal{P})=\mathcal{P}$;

\item [(3)] The categorical preseed $\mathcal{P}$ along with the categorical preseeds $\mu^{R}_{1}(\mathcal{P}),\ldots ,\mu^{R}_{n}(\mathcal{P})$ give
    rise to a hyperbolic category (respect to $\mu^{L}_{1}(\mathcal{P}),\ldots ,\mu^{L}_{n}(\mathcal{P}))$. This hyperbolic category will be  denoted by $\mathcal{H}^{R}(\mathcal{P})$ (respect to $\mathcal{H}^{L}(\mathcal{P})$).
\end{enumerate}

\end{lem}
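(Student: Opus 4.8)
The plan is to treat the three assertions separately, in each case reducing the categorical statement to a morphism-level check that mirrors the corresponding statement for ordinary preseeds (Proposition 2.7) under the dictionary ``cluster variable $x_i\leftrightarrow$ structure morphism $f_i$''; throughout I use that each $\Theta_k$ is an auto-equivalence, so it acts bijectively on hom-sets and preserves composition and identities. For (1) I would induct on the length of the mutation sequence, reducing to the claim that one categorical right (resp.\ left) mutation sends a categorical preseed to a categorical preseed; the general case follows because distinct slots commute (Remark 3.10) and a mutation touches only slot $k$. For one mutation $\mu^R_k$ applied to a preseed $\mathcal{S}$ I verify the three data of Definition 3.5: replacing $\Theta_k$ by $\Theta_k^{-1}$ again gives an auto-equivalence and $\xi$ is frozen, so the first two data are fine; for the skew Laurent category, whether or not $k$ was mutated before, the relevant formula (3.6), (3.8) or the general rule (3.11) produces a new $k$-th datum equal to an $\varepsilon_k$- or $\xi_k$-component composed with $f_k^{-1}$, which I check is invertible and of the correct type, namely a morphism $M\to\Theta_k(M)$, which is $\Theta_k^{-1}(\Theta_k(M))\to\Theta_k(M)$ for the mutated auto-equivalence $\Theta_k^{-1}$. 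Typing is immediate from $D(f_k)=\Theta_k(M)$; invertibility holds since $f_k$ is invertible and, in the localized setup of Example 3.8, $\xi_k,\varepsilon_k$ act invertibly. That the morphism sets $\Theta_k(Mor._{\mathcal{A}\langle\Theta\rangle}(-,-))$ form a compatible category is automatic because $\Theta_k$ is an equivalence.

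For (2) I would fix $k$ and track a generic object $(M,f)$ through both composites, only the $k$-th slot being affected. Applying $\mu^L_k$ sends the base object to $\Theta_k(M)$ and $f_k\mapsto f_k^{-1}\circ\varepsilon_{k,C(f_k)}$; applying $\mu^R_k$ to the result uses the mutated auto-equivalence $\Theta_k^{-1}$, returns the base object $\Theta_k(M)$ to $\Theta_k^{-1}(\Theta_k(M))=M$, and again inverts-and-multiplies the $k$-th slot. Expanding the composite and simplifying with the naturality relation (3.5) for $\xi$ (equivalently for $\varepsilon_i=\Theta_i^{-1}(\xi_i)$) shows the two slot operations are mutually inverse, so $(M,f)$ is returned unchanged; the symmetric computation handles $\mu^L_k\mu^R_k$. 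Since $\Theta$ returns to its original value and $\xi$ never moves, this gives $\mu^R_k\mu^L_k(\mathcal{P})=\mu^L_k\mu^R_k(\mathcal{P})=\mathcal{P}$, exactly paralleling Proposition 2.7(2).

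For (3), which carries the new content, I would build the hyperbolic category $\mathcal{A}_n\{\Theta,\xi\}$ of Definition 3.1 objectwise. To each skew Laurent object $(M,f)$ of $\mathcal{P}$ I assign a triple $(\gamma,M,\eta)$, taking $\eta_i:=f_i:\Theta_i(M)\to M$ from $\mathcal{P}$ and defining $\gamma_i:M\to\Theta_i(M)$ from the $i$-th structure morphism produced by the first mutation $\mu^R_i(\mathcal{P})$ via formula (3.6). Since that morphism is built from $\varepsilon_i$ and $f_i^{-1}$, the composite $\gamma_i\circ\eta_i=\varepsilon_{i,\Theta_i(M)}$ holds by construction, while the remaining relation $\eta_i\circ\gamma_i=\xi_{i,M}$ must be extracted from the naturality of the endomorphisms $\xi_i,\varepsilon_i$ of $Id_{\mathcal{A}}$ together with the defining identity $\varepsilon_i=\Theta_i^{-1}(\xi_i)$. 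On morphisms, I would show that an $h$ satisfying the skew Laurent square (3.4) for $\mathcal{P}$ automatically satisfies the full hyperbolic square (3.3), so that the glued morphism sets coincide with hyperbolic morphisms; assembling these objects and morphisms yields the hyperbolic category, denoted $\mathcal{H}^R(\mathcal{P})$, and the left version follows verbatim with $\mu^L_i$ in place of $\mu^R_i$.

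The step I expect to be the main obstacle is the convention matching in (3): making both relations $\eta_i\circ\gamma_i=\xi_{i,M}$ and $\gamma_i\circ\eta_i=\varepsilon_{i,\Theta_i(M)}$ hold simultaneously forces one to track precisely which slot carries $\xi_i$ and which carries $\varepsilon_i=\Theta_i^{-1}(\xi_i)$, which is exactly the bookkeeping that the alternating $\varepsilon$/$\xi$ pattern of the mutation rules (3.11)--(3.13) is designed to encode; the same care explains why the round trip in (2) returns the original datum rather than a twist of it. A secondary point, common to (1) and (2), is that the mutated structure morphisms must remain invertible as objects of a skew Laurent category; this reduces to invertibility of the $\xi_i$ and $\varepsilon_i$ in the localized Ore setting of Example 3.8 and is best isolated as a standing remark rather than re-proved in each part.
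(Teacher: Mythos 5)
Your overall architecture coincides with the paper's own proof: part (1) is reduced via Remark 3.10 to iterated mutations at a single index $k$ and checked generation by generation against the formulas (3.6), (3.8), (3.11); part (2) is the direct slot computation showing the two mutation operations on the $k$-th component are mutually inverse (the paper in fact dismisses this part as ``straightforward to prove'' with even less detail than you give); and for part (3) you construct exactly the paper's $\mathcal{H}^{R}(\mathcal{P})$, taking $\eta_{i}=f_{i}$ and $\gamma_{i}=\varepsilon_{i,\Theta_{i}(M)}\circ f_{i}^{-1}$ from the first right mutation, obtaining one relation of (3.2) by construction and the other from naturality, and verifying (3.3) by the same commuting computation the paper performs.

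The one substantive flaw is in part (1), where you assert that the compatibility of the transported morphism sets $\Theta_{k}(\mathrm{Mor.}_{\mathcal{A}\langle\Theta\rangle}(-,-))$ is ``automatic because $\Theta_{k}$ is an equivalence.'' It is not. Applying the functor $\Theta_{k}$ to the old square $h\circ f_{k}=f'_{k}\circ\Theta_{k}(h)$ produces a square relating $\Theta_{k}(h)$ to $\Theta_{k}(f_{k})$, not to the mutated slot morphism $\varepsilon_{k,D(f_{k})}\circ f_{k}^{-1}$; to see that $\Theta_{k}(h)$ satisfies diagram (3.4) for the mutated objects one must commute $\varepsilon_{k}$ past the morphism, i.e.\ invoke that $\varepsilon_{k}$ (equivalently $\xi_{k}$, via $\varepsilon_{k}=\Theta_{k}^{-1}(\xi_{k})$) is an endomorphism of the identity functor, as in (3.5). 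This naturality computation --- the paper's equation (3.17) with the two commuting diagrams that follow it, plus the analogous three-line computation for the second-generation mutation --- is essentially the entire content of the paper's proof of part (1), so your proposal declares automatic precisely the step on which the paper spends its effort. The repair is immediate, since you deploy exactly this naturality relation in your treatments of parts (2) and (3); relatedly, your standing remark isolating the invertibility of the $\xi_{i}$ and $\varepsilon_{i}$ is a sensible way of making explicit a hypothesis the paper leaves implicit in the mutation formulas (3.8) and (3.10).
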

\begin{proof}\begin{enumerate}
               \item [(1)]
In the following we prove Part (1) for categorical right mutations and for the categorical left mutations, the proof is  similar with the obvious changes. One can see that
Remark 3.10 reduces the proof of Part (1) into proving it only for the case $j_{i}=\ldots=j_{t}=k,$ for some $k\in [1,n]$. We start by proving Part (1) for sequences of mutations of lengths one and two, and  the  proof for  sequences of mutations with bigger lengths  is quite similar. Now, we show that $\mu^{R}_{k}(\mathcal{P})$ is again a categorical preseed. The main part is to prove  that $\mathcal{A}_{k^{(1)}}\langle \Theta \rangle$ satisfies the conditions of the skew Laurent category for the triple  $(\Theta^{-1}, \xi, \mathcal{A}_{k^{(1)}}\langle \Theta \rangle)$. One can see that if $(M, f)\in Obj.\mathcal{A}\langle \Theta \rangle$ then $\varepsilon_{k, \Theta(M)}\circ f^{-1}_{k}\in Mor_{\mathcal{A}}(\Theta^{-1}_{k}(\Theta_{k}(M)), \Theta_{k}(M))$. It  remains to show (3.4) for objects  and morphisms of $\mathcal{A}_{k^{(1)}}\langle \Theta \rangle$.
 Let $h$ be a morphism in $Mor_{\mathcal{A}\langle\Theta\rangle}((M, f), (M', f'))$. Then Diagram (3.4) for $\mathcal{P}$ tells us that $\Theta _{k}(h_{k})\circ f^{-1}_{k}=f ^{'-1}_{k}\circ h_{k}, k\in [1, n]$. Therefore  the following consecutive identities are satisfied
\begin{eqnarray}
 \nonumber \varepsilon_{k, \Theta(M')}\circ (\Theta _{k}(h_{k})\circ f ^{-1}_{k}) &=& \varepsilon_{k, \Theta(M')}\circ (f ^{'-1}_{k}\circ h_{k} )\\
   \Theta _{k}(h_{k})\circ (\varepsilon_{k, \Theta(M)}\circ f ^{-1}_{k})&=& (\varepsilon_{k,  \Theta(M')}\circ f ^{'-1}_{k})\circ  h_{k}.
\end{eqnarray}
Equation (3.17) above is due to  the fact that $\varepsilon_{k, -}$ being an endomorphism of the identical functor, thanks to (3.5) and the fact that $\xi_{k}$ is an endomorphism of the identical functor for every $k\in [1, n]$. Also, equation (3.17) says that the following diagram is commutative
\begin{equation}\label{}
   \nonumber \xymatrixcolsep{5pc}\xymatrix{\ar[d]^{\Theta_{k}(h)}\Theta_{k} (M)& \ar[l]_{\varepsilon_{k \Theta (M)}\circ f^{-1}_{k}}\ar[d]^{h} M\\
    \Theta_{k} (M') &\ar[l]^{\varepsilon_{k \Theta (M')}\circ f^{'-1}_{k}} M'}
\end{equation}
which is equivalent to the commutativity of the following diagram
\begin{equation}\label{}
  \nonumber  \xymatrixcolsep{5pc}\xymatrix{\ar[d]^{\Theta_{k}(h)}\Theta_{k} (M)& \ar[l]_{\varepsilon_{k, \Theta (M)}\circ f^{-1}_{k}}\ar[d]^{\Theta ^{-1}(\Theta_{k} (h))} \Theta_{k} ^{-1}(\Theta_{k} (M))\\
    \Theta_{k} (M') &\ar[l]^{\varepsilon_{k \Theta_{k} (M')}\circ f^{'-1}_{k}}\Theta_{k} ^{-1}(\Theta_{k} (M'))}
\end{equation}
which means
\begin{equation}\label{}
 \nonumber \Theta (h)\in Mor_{\mathcal{A}_{k}\langle\Theta\rangle}((\Theta(M), \varepsilon_{k, \Theta (M)}\circ f^{-1}_{k}), (\Theta(M'), \varepsilon_{k, \Theta (M')}\circ f^{'-1}_{k})).
\end{equation}

For right mutations of length two, altering $\mu^{R}_{k}(\mathcal{P})$ by mutation in $k$-direction, we get $\mu
_{k}^{R}\mu^{R}_{k}(\mathcal{P})=(\Theta, \xi, \mathcal{A}_{k^{(2)}}\langle\Theta\rangle$,
where objects of $\mathcal{A}_{k^{(2)}}\langle\Theta\rangle$ are given by the pairs $(M, \xi_{k,M}\circ (f_{k} \circ \varepsilon^{-1}_{k,\Theta _{k}(M)}))$. To prove that $(\Theta, \xi, \mathcal{A}_{k^{(2)}}\langle\Theta\rangle)$ is again a categorical preseed we need to prove (3.4) as follows
 \begin{eqnarray}
  \nonumber h\circ(\xi_{k,M}\circ (f_{k} \circ \varepsilon^{-1}_{k,\Theta _{k}(M')}))&=& \xi_{k, M'}\circ h\circ (f_{k} \circ \varepsilon^{-1}_{k,\Theta _{k}(M)}) \\
  \nonumber &=& \xi_{k, M'}\circ f'_{k}\circ \Theta _{k}(h) \circ \varepsilon^{-1}_{k,\Theta _{k}(M)} \\
 \nonumber &=&  (\xi_{k, M'}\circ f'_{k} \circ \varepsilon^{-1}_{k,\Theta _{k}(M')})\circ \Theta _{k}(h).
                  \end{eqnarray}
In the first equation we used that $\xi_{k}$ is an endomorphism of the identical functor of $\mathcal{A}$, in the second equation we used that $h\in hom_{A\langle \Theta \rangle}(M, M')$ and the last equation is due to $\varepsilon_{k}$ is also an endomorphism of the identical functor of $\mathcal{A}$.
 \item [(2)] To prove second part, we only need to  show  the equivalency of the two  categories   $\mu^{R}_{k}(\mu^{L}_{k}(\mathcal{A}\langle\Theta\rangle))$ and  $\mu^{L}_{k}(\mu^{R}_{k}(\mathcal{A}\langle\Theta\rangle))$. Actually, one  can see  that the category $\mathcal{A}\langle\Theta\rangle$ will be reproduced by applying $\mu^{R}_{k}\mu^{L}_{k}$ or $\mu^{L}_{k}\mu^{R}_{k}$, which is straightforward to prove.

    \item [(3)] To prove Part (3) for the categorical preseeds $\mathcal{P}, \mu^{R}_{1}(\mathcal{P}), \ldots,  \mu^{R}_{n}(\mathcal{P})$, we introduce the category  $\mathcal{H}^{R}(\mathcal{P})$ with objects are the triples $(\gamma, M, f)$, where  $\gamma=(\gamma_{1},\ldots,\gamma_{n})$ with $\gamma _{k}=\varepsilon_{k,\Theta _{k}(M)}\circ f^{-1}_{k}$, where $(M, f)$ is an object of $\mathcal{A}\langle\Theta\rangle$. The  morphisms of $\mathcal{H}^{R}(\mathcal{P})$ are given by

 \begin{equation}\label{}
 \nonumber Mor._{\mathcal{H}^{R}(\mathcal{P})}((\gamma, M, f),(\gamma', M', \eta'))=Mor._{\mathcal{A}\langle\Theta\rangle}((M, f), (M', f')).
 \end{equation}
 In the following we verify the conditions of the hyperbolic category. Starting with (3.2),  for every $k\in [1, n]$ we have

 \begin{equation}\label{}
   \nonumber \gamma_{k}\circ f_{k}=\varepsilon_{k, D(f_{k})}\circ f^{-1}_{k}\circ f_{k}=\varepsilon_{k, D(f_{k})},
 \end{equation}
and
\begin{equation}\label{}
   \nonumber f_{k}\circ \gamma_{k}=f_{k} \circ \xi_{k, D(f_{k})}\circ\eta^{-1}_{k}=f_{k} \circ f^{-1}_{k}\circ\xi_{k, C(f_{k})}=\xi_{k, C(f_{k})}.
 \end{equation}
 To prove (3.3), let $h\in Mor._{\mathcal{A}\langle\Theta\rangle}((M, f), (M', f'))$. Then $h\circ f_{k}=f'_{k}\circ \Theta_{k}(h)$ for every $k\in [1, n]$.
 Also
 \begin{eqnarray}
   \nonumber \Theta(h)\circ (\varepsilon_{k, D(f_{k})}\circ f_{k}^{-1}) &=& \Theta(h)\circ f_{k}^{-1}\circ \varepsilon_{k, C(f_{k})} \\
   \nonumber &=& f'^{-1}_{k}\circ h \circ \varepsilon_{k, C(f_{k})} \\
    \nonumber &=& f'^{-1}_{k}\circ \varepsilon_{k, C(h)}  \circ h \\
   \nonumber  &=& f'^{-1}_{k}\circ \varepsilon_{k, C(f')}  \circ h \\
   \nonumber  &=& (\varepsilon_{k, D(f')}\circ f'^{-1}_{k})\circ h.
 \end{eqnarray}

     \end{enumerate}
     The definition  of $\mathcal{H}^{L}(\mathcal{P})$ and the Proof of its being hyperbolic category is quite similar to the proof of $\mathcal{H}^{R}(\mathcal{P})$ with the obvious changes.
\end{proof}

\begin{cor} For every two mutationally equivalent categorical preseeds $\mathcal{P}$ and $\mathcal{S}$ there exist a unique  sequence of  categorical mutations $\mu$  such that $\mathcal{S}=\mu(\mathcal{P})$. Where for every $i\in [1, n]$, $\mu_{i}$ appears in  $\mu$ only once and   in  one of the forms $(\mu^{L}_{i})^{n_{i}}$ or $(\mu^{R}_{i})^{n_{i}}$ for some non-negative number $n_{i}$
\end{cor}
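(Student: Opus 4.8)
The plan is to read the statement as a normal-form result for the group generated by the categorical mutations, and to split it into an existence part and a uniqueness part. Write $\mu^{R}_{i}$ and $\mu^{L}_{i}$ for the right and left categorical mutations in direction $i$. By Lemma~\ref{lem}(2) we have $\mu^{R}_{i}\mu^{L}_{i}=\mu^{L}_{i}\mu^{R}_{i}=\mathrm{id}$ as operators on categorical preseeds, so $\mu^{L}_{i}=(\mu^{R}_{i})^{-1}$; and by Remark~\ref{rem} mutations in any two directions commute. Hence the operators $\{\mu^{R}_{i},\mu^{L}_{i}\}_{i=1}^{n}$ generate an abelian group $G$ that is a quotient of $\mathbb{Z}^{n}$, with $\mu^{R}_{i}$ corresponding to the $i$-th standard generator. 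In these terms the corollary asserts that the element of $G$ carrying $\mathcal{P}$ to $\mathcal{S}$ has a unique reduced representative $\prod_{i=1}^{n}(\mu^{R}_{i})^{n_{i}}$ with $n_{i}\in\mathbb{Z}$, where a negative exponent is recorded as a positive power of $\mu^{L}_{i}$.

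For existence I would start from the definition of mutational equivalence: there is some sequence $\nu$ of categorical mutations with $\mathcal{S}=\nu(\mathcal{P})$. Using the commutation relations of Remark~\ref{rem}, I reorder $\nu$ so that all the mutations in a fixed direction $i$ are grouped into a single block. Within the $i$-th block one has a word in $\mu^{R}_{i}$ and its inverse $\mu^{L}_{i}$; by Lemma~\ref{lem}(2) every adjacent inverse pair cancels, so the block collapses to one power $(\mu^{R}_{i})^{n_{i}}$ with $n_{i}\in\mathbb{Z}$, recorded as $(\mu^{R}_{i})^{n_{i}}$ when $n_{i}\ge 0$ and as $(\mu^{L}_{i})^{-n_{i}}$ when $n_{i}<0$. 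Carrying this out in every direction yields a sequence of exactly the required form, which proves existence.

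For uniqueness it suffices to show that the action of $G$ on the orbit of $\mathcal{P}$ is free, i.e. that $\prod_{i}(\mu^{R}_{i})^{n_{i}}(\mathcal{P})=\prod_{i}(\mu^{R}_{i})^{m_{i}}(\mathcal{P})$ forces $n_{i}=m_{i}$ for all $i$. The structural point is that $\mu^{R}_{i}$ and $\mu^{L}_{i}$ modify only the $i$-th structural morphism of a skew Laurent object $(M,f)$, transporting the underlying object and the remaining entries by the commuting auto-equivalence $\Theta_{i}^{\pm 1}$; hence the directions decouple and the $i$-th structural morphism of the resulting preseed depends only on $n_{i}$. This reduces the problem to the rank-one claim that the preseeds $(\mu^{R}_{i})^{t}(\mathcal{P})$, $t\in\mathbb{Z}$, are pairwise distinct, which I would read off from the explicit description of the mutated skew Laurent objects (Example 3.11): the $i$-th morphism after the signed power $t$ lies in $\mathrm{Mor}(\Theta_{i}(M),M)$ or in $\mathrm{Mor}(M,\Theta_{i}(M))$ according to the parity of $t$, and it carries the factor $\xi_{i}^{\lceil t/2\rceil}$ whose magnitude strictly increases with $|t|$. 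Thus the domain--codomain type recovers the parity and the accumulated power of $\xi_{i}$ recovers $|t|$ and its sign, so $t$ is determined by the object and $n_{i}=m_{i}$.

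The hard part will be precisely this freeness statement: the commutation and cancellation relations alone only identify $G$ as a quotient of $\mathbb{Z}^{n}$, and to rule out hidden relations one must check that distinct signed exponents really produce distinct skew Laurent objects. This is where one uses that each $\xi_{i}$ is non-torsion, equivalently that the associated cluster set is infinite, as exhibited in Example 2.8, so that the powers of $\xi_{i}$ occurring in Example 3.11 are genuinely distinct; without such a hypothesis the normal form could collapse and uniqueness would fail.
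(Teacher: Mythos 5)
Your proposal follows essentially the same route as the paper's proof: existence by using the commutation relations of Remark 3.10 together with the cancellation $\mu^{R}_{k}\mu^{L}_{k}=\mu^{L}_{k}\mu^{R}_{k}=\mathrm{id}$ from Lemma 3.12(2) to collapse any mixed sequence into independent one-direction blocks $(\mu^{R}_{i})^{n_{i}}$ or $(\mu^{L}_{i})^{n_{i}}$, and uniqueness by decoupling the directions and reducing to the rank-one case. The only difference is one of detail: where the paper simply asserts the rank-one statement (``one can see that the statement of the corollary is true for any two categorical preseeds of rank 1''), you actually verify it from the explicit parity and $\xi_{i}$-power bookkeeping of Example 3.11, and you correctly flag the non-torsion hypothesis on $\xi_{i}$ that this freeness argument silently requires --- the same kind of non-degeneracy the paper itself invokes implicitly later, in the proof of Lemma 3.25, when it derives contradictions from $\eta_{i_{t}}$ being different from the identity and from $\xi_{i_{t}}$.
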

\begin{proof} We start by proving the following statement: Every mixed sequence  of  categorical right and left mutations $\mu=\mu_{i_{1}}\cdots \mu_{i_{k}}$  can be reduced into a smaller or equal length sequence of categorical mutations of the form $\mu^{R}\mu^{L}$ or equivalently $\mu^{L}\mu^{R}$ where
\begin{equation}\label{}
 \mu^{R}=(\mu^{R}_{i_{t_{1}}})^{n_{1}}\cdots(\mu^{R}_{i_{t_{q}}})^{n_{q}} \ \ \text{and} \ \  \mu^{L}=(\mu^{L}_{i_{d_{1}}})^{n'_{1}}\cdots(\mu^{L}_{i_{d_{q'}}})^{n'_{q'}},
\end{equation}
with $i_{t_{j}}, i_{d_{j}}, q$ and $q'\in [1, n], n_{j}, n'_{j}\in \mathbb{N}$ for  $j\in [1, max(q, q')]$ such that $\{i_{t_{1}}, \cdots, i_{t_{q}} \} \cap \{i_{d_{1}}, \cdots, i_{d_{q'}}\}={\O} $. We use (3.14)  along with Part (2) of Lemma 3.12  to rewrite every sequence of mixed categorical right and left  mutations as $\mu^{R}\mu^{L}$ or $\mu^{L}\mu^{R}$ where $\mu^{R}$ is a sequence of only categorical right mutations and $\mu^{L}$ is a sequence of only categorical left mutations as given in (3.18).
One can see that the statement of the corollary is true for any two categorical preseeds of rank 1.  And since  every sequence of categorical mutations $\mu$ can be written as the product $\mu^{R}\mu^{L}$ such that $\mu^{R}$ and $\mu^{L}$ are as given in (3.18), where each sequence of categorical mutations is written as a product of independent sequences of categorical mutations each one is in one direction. Where the independence of these sequences is guaranteed from the definition of categorical mutations, Definition 3.9. Each one of theses  sequences connects  one component, say $\gamma_{k}$ of a generic skew Laurent object $(M, (\gamma_{1}, \ldots, \gamma_{n}))$ in  $\mathcal{P}$  to same-index component of the corresponding  skew Laurent object in $\mathcal{S}$ which can be seen as a sequence between two rank 1 categorical preseeds. So each subsequence $(\mu^{R}_{i_{t_{1}}})^{n_{i}}$ and $(\mu^{L}_{i_{t_{1}}})^{n_{i}}$ is unique thus $\mu$ must be unique.
\end{proof}

\subsection{Clustered hyperbolic categories}

Let $\mathcal{P}$ and $\mathcal{S}$ be two mutationally equivalent categorical preseeds, that is $\mathcal{S}=\mu^{R}(\mu^{L}(\mathcal{P}))$ or $\mathcal{S}=\mu^{L}(\mu^{R}(\mathcal{P}))$ where $\mu^{R}$ and $\mu^{L}$ are as given in (3.18). Then we  define the hyperbolic categories $\mathcal{H}^{R}(\mathcal{S})$ and $\mathcal{H}^{L}(\mathcal{S})$ of $\mathcal{S}$ by the same way as it was defined in the Proof of Part (3) of Lemma 3.12. The category $\mathcal{H}^{R}(\mathcal{S})$  (respect to $\mathcal{H}^{L}(\mathcal{S})$) will be called the right (respect to left) hyperbolic category of $\mathcal{S}$. Here is a more precise definition of $\mathcal{H}^{R}(\mathcal{S})$ and the definition of $\mathcal{H}^{L}(\mathcal{S})$ is similar with the obvious changes. Objects of $\mathcal{H}^{R}(\mathcal{S})$  are given as follows, if $(M, (g_{1},\ldots , g_{n}))$ is a typical object in the skew Laurent category $\mathcal{A}_{\mathcal{S}}\langle \Theta \rangle$ of $\mathcal{S}$  then $(\mu^{R}_{1}(g_{1}), \ldots, \mu^{R}_{n}(g_{n}),M , (g_{1},\ldots , g_{n}))$ is an object in $\mathcal{H}^{R}(\mathcal{S})$ (respect to $(\mu^{L}_{1}(g_{1}), \ldots, \mu^{L}_{n}(g_{n}),M , (g_{1},\ldots , g_{n}))$ is an object in $\mathcal{H}^{L}(\mathcal{S})$). The morphisms are given by
\begin{equation}\label{}
Hom_{\mathcal{H}^{R}(\mathcal{S})}((g', M, g), (h', M', h))=Hom_{\mathcal{A}_{\mathcal{S}}\langle \Theta \rangle}((M, g), (M', h)).
\end{equation}

\begin{defns} \begin{enumerate}
 \item The triple $(\gamma, M, \eta)$ is called a \emph{hyperbolic object} of the categorical preseed $\mathcal{P}$ if it is an object in the following set
                  \begin{equation}\label{}
              \nonumber H(\mathcal{S})=Obj.\mathcal{H}^{R}(\mathcal{S})\cup Obj.\mathcal{H}^{L}(\mathcal{S}),
                  \end{equation}
                  for some categorical preseed  $\mathcal{S}$ that is mutationally equivalent to $\mathcal{P}$.
 \item A \emph{mutation hyperbolic category} $\mathcal{H}(\mathcal{P})$ of a categorical preseed $\mathcal{P}$ is a category with set of  objects  consists of the hyperbolic objects of $\mathcal{P}$. The  morphisms are given by: $h=(h_{1},\ldots, h_{n}) \in  Mor._{\mathcal{H}(\mathcal{P})}((f', M, f), (g', W, g) )$  where   $h_{i} \in Mor._{\mathcal{A}}(M, W), i\in [1, n]$  satisfies the commutativity of   the diagram

 \begin{equation}\label{}
   \xymatrix{
 M \ar[d]^{h_{i}} \ar[r]^{f'_{i}} & \ar[d]^{\Theta_{i}(h_{i})}\Theta_{i} (M)\ar[r]^{f_{i}}& \ar[d]^{h_{i}} M\\
  W \ar[r]^{g'_{i}}  &  \Theta_{i} (W)\ar[r]^{g_{i}} & W.}
\end{equation}

\item A \emph{clustered hyperbolic category} $\mathfrak{C}$ in a categorical preseed $\mathcal{P}$ is defined to be any full subcategory of the mutation hyperbolic category $\mathcal{H(P)}$  where for every object  $(f', M, f)$   in Obj.$\mathfrak{C}$ the pair $(M, f)$ is an object in one and only one skew Laurent category $\mathcal{A}\langle \Theta \rangle$ for some categorical pressed $\mathcal{S}$ that is   mutationally equivalent to $\mathcal{P}$.

  \end{enumerate}
\end{defns}
 In the following we will omit the word hyperbolic from the expression hyperbolic mutation category.
\begin{rem} With the setting of Definitions 3.14 the categories $\mathcal{H}^{R}(\mathcal{S})$ and $\mathcal{H}^{L}(\mathcal{S})$ are full subcategories of $\mathcal{H}(\mathcal{P})$.
\end{rem}
\begin{proof} By the definition the sets of objects of the categories $\mathcal{H}^{R}(\mathcal{S})$ and $\mathcal{H}^{L}(\mathcal{S})$, they are subsets of the set of objects of $\mathcal{H}(\mathcal{P})$. For the morphisms, obviously  $Mor._{\mathcal{H}(\mathcal{P})}((f', M,f),(g', W, g)) \subseteq Mor._{\mathcal{H}^{R}(\mathcal{S})}((f', M,f),(g', W, g))$ (respect to $\mathcal{H}^{L}(\mathcal{S})$). For the other direction,  if $h$ satisfies (3.19) then it make the right half of the diagram in (3.20) commutative. Now since, $f'=\mu_{k}^{R}(f)$ and $g'=\mu_{k}^{R}(g)$ (respect to $\mu^{L}_{k}(-)$), hence for ever $k\in [1, n]$ we have

\begin{equation}\label{}
   \nonumber \xi_{k,\Theta_{k}(W)}g_{k}^{-1}h_{k}=\xi_{k,\Theta(W)}\Theta_{k}(h_{k})f^{-1}_{k}=\Theta_{k}(h_{k})\xi_{k,\Theta(M)}f^{-1}_{k},
\end{equation}
where the first equation is by using the commutativity of the right half of the diagram (3.20) and the second equation is due to the fact that $\xi_{k}$ satisfies (3.5). The case of left categorical mutation is quite similar and in case of using $\varepsilon_{k}$ instead of $\xi_{k}$ there will be no major changes.  which finishes the proof.
\end{proof}
One of the main goals of this article is providing a general technique to identify a full subcategory  of every mutation  category that is a clustered hyperbolic category. For this sake,  we introduce \emph{cluster classes} and  \emph{hyperbolic class}.

\begin{defn}
\begin{itemize}
Let $(M, \eta)$ be an object in a skew Laurent category $\mathcal{A}\langle \Theta \rangle$. Then $(M, \eta)$ forms  the following two sets which we call \emph{cluster classes}
\begin{enumerate}
  \item [(a)] $[(M, \eta)]_{0}$ consists of all hyperbolic objects  $(\gamma', M', \eta')$ where $(M', \eta')$ can be obtained from  $(M, \eta)$ by applying a sequence of categorical (right or left) mutations.
  \item [(b)] $[(M, \eta)]_{1}$ consists of all hyperbolic objects  $(\gamma', M', \eta')$ where $(M', \eta')$ can be obtained from  $\mu^{L}_{k}(M, \eta)$ by applying a sequence of categorical right mutations or from $\mu^{R}_{k}(M, \eta)$ by applying a sequence of categorical left mutations.
 \end{enumerate}
 We also introduce the set   $\overline{(M, \eta)}=[(M, \eta)]_{0}\cup[(M, \eta)]_{1}$.
\item  The \emph{ hyperbolic class} of $(\gamma, M,\eta)$ at the categorical preseed $\mathcal{S}$ is given by
 \begin{equation}\label{}
 \nonumber  \langle(M, \eta)\rangle(\mathcal{S})=\{(h', M, h)\in \overline{(M, \eta)};  (M, h) \ \text{is an object in} \ \mathcal{A}\langle \Theta \rangle \ \text{of} \ \mathcal{S}\},
 \end{equation}
  or in other words
  \begin{equation}\label{}
  \langle(M, \eta)\rangle(\mathcal{S})=\overline{(M, \eta)}\cap(Obj.\mathcal{H}^{R}(\mathcal{S}) \cup Obj.\mathcal{H}^{L}(\mathcal{S}))
\end{equation}
\end{itemize}

\end{defn}

\begin{rem}

Let $\mathcal{P}$ be a categorical preseed in $\mathcal{A}$.  Then the following three conditions are equivalent

\begin{enumerate}
  \item  A mutation hyperbolic category $\mathcal{H}(\mathcal{P})$ is a clustered hyperbolic category.
  \item For every object $M$ in $\mathcal{A}$,  each hyperbolic class $\langle(M, \eta)\rangle(\mathcal{S})$  contains exactly two elements for every categorical preseed $\mathcal{S}$ that is mutationally equivalent $\mathcal{P}$.
  \item The skew Laurent category $\mathcal{A}\langle \Theta \rangle$ of the initial categorical preseed $\mathcal{P}$ contains no mutationally equivalent objects.
\end{enumerate}

In the following we introduce a presentation for cluster classes. All the definitions and proofs are written for a cluster class $[(M, \eta)]\in \{[(M, \eta)]_{0}, [(M, \eta)]_{1}\}$.

\end{rem}
\begin{defns} \begin{enumerate} (Zigzag presentations of cluster classes)
\item [(1)] Let $h=(\gamma, M, \eta)$ and $h'=(\gamma', M', \eta')$ be two hyperbolic objects. We say that $h$ divides $h'$ at  $k\in [1, n]$, in symbols it shall be written as  $h|_{k}h'$, if  either of $\eta_{k}$ or $\eta^{-1}_{k}$ divides $\eta'_{k}$; that is  $\eta'_{k}$ can be written as a composition of morphisms in the form $g\eta_{k} h$ or $g\eta^{-1}_{k} h$, for some morphisms $g$ and $h$. In particular if $h|_{k}h'$ and $h'=\mu_{k}^{L}(h)$ or $h'=\mu_{k}^{R}(h)$ we simply use the following diagrams respectively

  \begin{equation}\label{}
    \nonumber \xymatrix{\cdot_{h'}  \\
        &\cdot_{h} \ar[ul]_{k}} \ \ \ \ \ \ \ \ \  \text{or} \ \ \ \ \  \ \ \   \xymatrix{  & \cdot_{h'}  \\
        \cdot_{h} \ar[ur]^{k} }
     \end{equation}

\item [(2)] \begin{itemize}
              \item
A morphism $\alpha$ is called an \emph{initial morphism} at $k \in [1, n]$  if for every two morphisms  $\alpha'$ and $\alpha''$   that can be obtained from  $\alpha$ by applying a single left and  right categorical mutations at $k$ respectively, then  we have  $\alpha |_{k}\alpha'$ and $\alpha |_{k}\alpha''$.  A hyperbolic object $h_{0}=(\gamma, M_{0}, \eta)$ is called an \emph{initial object} of the cluster class $[(M, (\eta_{1}, \ldots, \eta_{n})]$ if for every $k$ in $[1, n]$, the morphism $\eta_{k}$ is an initial morphism  at $k$, which can be presented by

      \begin{equation}\label{}
    \nonumber    \xymatrix{\cdot_{\mu^{L}_{k}(h)}&  & \cdot_{\mu^{R}_{k}(h)}  \\
       & \cdot_{h_{0}} \ar[ur]^{k} \ar[ul]_{k}}
     \end{equation}
     \begin{center}
for  every $k\in [1, n]$.
\end{center}

 \item Let $[(M, \eta)]$ be a cluster class with initial  objects $h_{1}, \ldots, h_{n}, n>1$ such that the biggest number of non initial objects between $h_{i}$ and $h_{i+1}$ for every $i \in [1, n-1]$  is $m$. Then we say that $[(M, \eta)]$ has a \emph{zigzag presentation} of length $n$ and height $m$ and  in case of $n=1$, the height would be a zero. The initial objects $h_{1}$ and $h_{n}$ are called the \emph{left} and the \emph{ right initial objects} of $[(M, \eta)]$ respectively.

Consider the following   zigzag presentation of length $n$ and height one

 \begin{equation}\label{}
    \nonumber    \xymatrix{  \ddots &&\cdot &&\cdot & \ldots &\cdot &&\iddots\\
    &\cdot_{h_{1}}\ar[ul]_{k}\ar[ur]_{k}&&\cdot_{h_{2}}\ar[ul]_{k}\ar[ur]_{k}&& \ldots &  &\cdot_{h_{n}} \ar[ul]_{k}\ar[ur]_{k} }
     \end{equation}

              \item A non-zero morphism $\eta$ is called a \emph{naked morphism} at $k\in [1, n]$ if it is not divisible by any of the morphisms  $\varepsilon^{\pm 1}_{k, M}, \xi^{\pm 1}_{k, M}, \varepsilon^{\pm 1}_{k, \Theta(M)}$ or $\xi^{\pm 1}_{k, \Theta(M)}$. A skew Laurent object $h=(M, (\eta_{1}, \ldots, \eta_{n}))$ is called a naked object if  $\eta_{k}$ is a naked morphism for every $k\in [1, n]$.
            \end{itemize}

     \end{enumerate}

\end{defns}

\begin{exams}  Consider the Weyl categorical preseed given in Example 3.11 using Example 3.8. Recall that $\mathcal{A}$ is the category of all modules over $D_{n}\{\theta, \xi\}$  considering them as $D$-modules. Let $M$ be an object in $\mathcal{A}$.
\begin{enumerate}
                \item The object $(M, \overline{y})$ is a naked object in $\mathcal{A}\langle\Theta\rangle$ and the $k^{\text{th}}$ branch of the zigzag presentation of $[(M, \overline{y})]_{0}$ is as follows

  \begin{equation}\label{}
    \nonumber    \xymatrix{\ddots \cdot_{\overline{y}_{k^{-2}}} &&&& \cdot_{\overline{y}_{k^{2}}} \iddots \\
    &\cdot_{\overline{y}_{k^{-1}}}\ar[ul]_{k}&& \cdot_{\overline{y}_{k^{1}}} \ar[ur]^{k}& \\
       & & \cdot_{\overline{y}_{k}} \ar[ur]^{k} \ar[ul]_{k}}
     \end{equation}

                \item Consider the following skew Laurent  object $(M, \xi_{M}^{-1} \eta)$ where the morphism $\eta$ is a naked morphism. The cluster class  $[(M, \xi_{M}^{-1} \eta)]_{0}$ has  the following  zigzag presentation of length two and height one
 \begin{equation}\label{}
    \nonumber    \xymatrix{\ddots \cdot_{\xi^{3}\eta} &&&&& & \cdot_{\xi\eta\varepsilon^{-2} }\iddots  \\
  &\cdot_{\eta^{-1}\xi^{2}}\ar[ul]_{k} & &\cdot_{\varepsilon \eta^{-1}\xi} & & \cdot_{\varepsilon^{2}\eta^{-1}\ar[ur]^{k} }  \\
     & &\cdot_{\xi^{-1} \eta} \ar[ur]^{k} \ar[ul]_{k} && \cdot_{\eta \varepsilon^{-1}} \ar[ur]^{k} \ar[ul]_{k}}
     \end{equation}
The left and right initial hyperbolic objects are $(M, \xi_{M}^{-1} \eta)$ and $(M,  \eta\varepsilon_{\Theta(M)}^{-1})$ respectively.

 \item The zigzag presentation of the cluster class  $[(M, \xi_{M}^{-1} \eta)]_{1}$ with $\eta$  naked morphism is of length one and hight zero, as follows
  \begin{equation}\label{}
    \nonumber    \xymatrix{\ddots \cdot_{\xi^{-2}\eta\varepsilon} &&&& \cdot_{\varepsilon\xi^{-1} \eta\xi^{-1}} \iddots \\
    &\cdot_{\eta^{-1}\xi^{2}}\ar[ul]_{k}&& \cdot_{\xi\eta^{-1}\xi} \ar[ur]^{k}& \\
       & & \cdot_{\xi_{M}^{-1} \eta} \ar[ur]^{k} \ar[ul]_{k}}
     \end{equation}
\item An object $M$  is called \emph{Verma object} in $\mathcal{A}$ if $\xi_{k}M=0$ for all $k\in [1, n]$, for more details about Verma modules of generalized Weyl algebras see [18]. Therefore, if $M$ is a Verma object, then the actions of the morphisms  $\varepsilon^{\pm 1}_{k, M}, \xi^{\pm 1}_{k, M}, \varepsilon^{\pm 1}_{k, \Theta(M)}$ and $\xi^{\pm 1}_{k, \Theta(M)}$ on $M$ are by multiplying by one of the following constants $\{-1,0 ,1\}$ which means   $(M, \eta)$ is  actually a naked object in $\mathcal{A}\langle\Theta\rangle$ for any non-zero morphism $\eta$.
\end{enumerate}

\end{exams}

\begin{prop} Let $h$ and $h'$ be two hyperbolic objects in the same cluster class. If $h|_{k}h'$ then there is no initial hyperbolic objects in between them. In other words if $\mu^{R}_{k^{t}}(h)=h'$ (respect to $\mu^{L}_{k^{t}}(h)=h'$) then non of the hyperbolic objects $\mu^{R}_{k^{d}}(h), 1\leq d<t$  (respect to $t\leq  d \leq -1$) is  initial object.
\end{prop}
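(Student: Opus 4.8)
The plan is to reduce the statement to the behaviour of the single $k$-th component morphism along the one-dimensional mutation chain through $h$, and then to control divisibility by a length function on that chain. By Remark 3.10 the mutations in direction $k$ commute with those in all other directions, so along the sequence in question only the $k$-th entry $\eta_k$ of each hyperbolic object is altered; by Lemma 3.12(2) the operators $\mu^R_k$ and $\mu^L_k$ are mutually inverse, so setting $\mu^R_{k^{-d}}:=\mu^L_{k^d}$ the objects $\{\mu^R_{k^d}(h)\}_{d\in\mathbb Z}$ form one bi-infinite chain. First I would record the canonical form of the $k$-th morphism along this chain exactly as in Example 3.11: up to a fixed naked core $\eta$, the $d$-th morphism is $\xi_k^{\lceil d/2\rceil}\,\eta^{\,\pm 1}\,\xi_k^{-\lfloor d/2\rfloor}$ on the right-hand arm and the mirror expression on the left-hand arm, the core appearing as $\eta$ or $\eta^{-1}$ according to the parity of $d$.

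Next I would introduce a degree function $\ell$ assigning to each object the number of factors $\xi_k^{\pm1},\varepsilon_k^{\pm1}$ occurring in this reduced form of its $k$-th component. From the recursion in Definition 3.9(3) --- at each step the $k$-th morphism is inverted and pre-composed with a single $\xi_k$ or $\varepsilon_k$ --- one sees that one mutation changes $\ell$ by exactly $\pm1$, a factor being either created or cancelled, so $\ell$ is a piecewise-monotone zigzag along the chain. The decisive point is the dictionary between Definitions 3.18 and $\ell$: by the definition of an initial morphism, $\eta_k$ is initial at $k$ precisely when it divides \emph{both} neighbours, and, read through the subword description of $|_k$, this happens exactly where $\ell$ has a strict local minimum; symmetrically the peaks between consecutive initial objects are the local maxima of $\ell$. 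I would then establish the key lemma: $h|_k h'$ holds iff the reduced word of the $k$-th component of $h$ occurs, up to inversion, as a \emph{consecutive} subword of that of $h'$, and this forces $\ell$ to be non-decreasing, with no change in the position of the core, along the whole chain segment joining $h$ to $h'$.

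Granting the key lemma the proposition follows at once. Assume $\mu^R_{k^t}(h)=h'$ with $t>0$ and $h|_k h'$, and suppose towards a contradiction that some intermediate object $h_0=\mu^R_{k^d}(h)$ with $1\le d<t$ is initial. Then $\ell$ has a strict local minimum at $d$, hence strictly decreases somewhere on $[0,d]$, contradicting that $\ell$ is non-decreasing on $[0,t]$. Equivalently, the $\xi_k$-powers lying beyond the dip at $h_0$ cannot occur in the reduced word of $h'$, so the word of $h$ is not a consecutive subword of that of $h'$ and $h$ does not divide $h'$ at $k$ --- again a contradiction. Therefore no intermediate object is initial; the case of left mutations is identical after exchanging $\mu^R_k$ and $\mu^L_k$ and reversing the sign of the chain parameter, by Lemma 3.12(2).

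The step I expect to be the main obstacle is the key lemma itself, identifying $|_k$ with consecutive-subword containment and hence with monotonicity of $\ell$: Definition 3.18 permits the factors $g,h$ in $\eta'_k=g\,\eta_k^{\pm1}\,h$ to be arbitrary morphisms, so I must argue that the single naked core together with its surrounding powers of $\xi_k$ and $\varepsilon_k$ can only be assembled in reduced form, with no cancellation available to shorten the word. I would pin this down by induction on $d$ straight from the recursion of Definition 3.9(3), thereby reducing the entire analysis to the explicit rank-one computation of Example 3.11, where the assertion becomes a transparent statement about words in $\xi_k$ and a single occurrence of $\eta^{\pm1}$.
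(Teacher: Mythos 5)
Your proposal is correct and follows essentially the same route as the paper's own proof: the paper likewise tracks the multiplicities of $\xi_{k}^{\pm 1}$ and $\varepsilon_{k}^{\pm 1}$ accumulating around the naked core (your length function $\ell$), uses the alternation of $\xi$ and $\varepsilon$ under successive mutations to derive the canonical forms $\xi^{q}h\varepsilon^{-(q-1)}$ or $\varepsilon^{q}h\xi^{-(q-1)}$, argues by induction on $d$ that an initial intermediate object would force a cancellation lowering a multiplicity and thereby contradicting $h|_{k}h'$, and disposes of the naked case separately by noting that divisibility then holds at every mutation step. Your dictionary ``initial object $=$ strict local minimum of $\ell$'' and the explicit reduced-word, consecutive-subword reading of the division relation are a cleaner formalization of the same multiplicity bookkeeping the paper carries out implicitly, including the no-cancellation caveat the paper glosses over.
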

\begin{proof} The proof is written for the case $\mu^{R}_{k^{t}}(h)=h'$ and the proof of  categorical left mutation case is similar.  We start with the case when $h$ is not a naked morphism and we will prove it by mathematical induction on the length of $d$. If $h$ is not naked morphism then it must be  divisible by some of the morphisms $\varepsilon^{\pm 1}_{k, M}, \xi^{\pm 1}_{k, M}, \varepsilon^{\pm 1}_{k, \Theta(M)}$ or $\xi^{\pm 1}_{k, \Theta(M)}$. If $d=1$ then no thing to prove. Assume that $d=2$ and let $h''=\mu^{R}_{k}(h)$. Then if $h''$ is an initial object then it divides $h$ at $k$ which means that the categorical mutation $\mu^{R}_{k}$ must have canceled one of the morphisms $\varepsilon^{\pm 1}_{k, M}, \xi^{\pm 1}_{k, M}, \varepsilon^{\pm 1}_{k, \Theta(M)}$ or $\xi^{\pm 1}_{k, \Theta(M)}$ that appears in the expression of $h''$, assume, without loss of generality, that the canceled morphism is one of $\xi^{\pm 1}_{k, \Theta(M)}$ or $\xi^{\pm 1}_{k, M}$. By definition of categorical  mutation  it alternates between multiplying by the morphisms $\xi$ and $\varepsilon$, then applying the categorical right mutation again on $h''$ we will obtain a morphism with one of the expressions $\varepsilon^{\pm 1}_{k, M} (h'')^{\pm1}$ or $\varepsilon^{\pm 1}_{k, \Theta(M)} (h'')^{\pm1}$.  Then the multiplicity of either of the  elements $\xi^{\pm 1}_{k, \Theta(M)}$ or $\xi^{\pm 1}_{k, M}$ in the expression of the $\mu^{R}_{k}(h'')$, which equals $h'$,  is less than their multiplicity in the expression of $h$  which contradicts with the fact that $h|_{k}h'$. Suppose that the statement is true for $d=q$, i.e., non of the hyperbolic objects $\mu^{R}_{k^{t}}(h)$ is initial object for $1<t \leq q $ then using the same argument we used in the induction step at $d=2$, we must have  $\mu^{R}_{k^{t}}(h)|_{k}\mu^{R}_{k^{t+1}}(h)$,  $1<t \leq q $, $q>2$. Then applying categorical right mutations accumulates the morphisms $\varepsilon^{\pm 1}_{k, M}, \xi^{\pm 1}_{k, M}, \varepsilon^{\pm 1}_{k, \Theta(M)}$ and $\xi^{\pm 1}_{k, \Theta(M)}$ around the naked morphism of $h$ and never cancel any of them,  which means $\mu^{R}_{k^{q}}(h)$ has one of the forms $\xi^{q} h \varepsilon^{-(q-1)}$ or  $\varepsilon^{q} h \xi^{-(q-1)}$. Thus applying categorical right mutation one more time will not produce any smaller form that can divide either of $\xi^{q} h \varepsilon^{-(q-1)}$ or  $\varepsilon^{q} h \xi^{-(q-1)}$ which means it is not possible to produce an initial object using $\mu^{R}_{k^{q+j}}$ for $j>1$. Which finishes the proof in the non-naked case.
If $h$ is a naked morphism then  every categorical mutations step applied to $h$ will result in multiplying $h^{-1}$ or $h$ by one of the morphisms $\varepsilon^{\pm 1}_{k, M}, \xi^{\pm 1}_{k, M}, \varepsilon^{\pm 1}_{k, \Theta(M)}$ or $\xi^{\pm 1}_{k, \Theta(M)}$ which gives us the following relations $\mu_{k^{t}}(h)|_{k}\mu_{k^{t+1}}(h), t \in [1, d-1]$, therefore non of $\mu_{k^{t}}(h), t \in [1, d-1]$ could be an initial object.

\end{proof}

\begin{cor} If  $h$ is a naked object then  the zigzag presentation of $[(M, h)]$ is of length one and height zero, i.e., it  is an open-up cone with vertex at $h$.
\end{cor}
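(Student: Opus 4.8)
The plan is to combine two facts: that a naked object is automatically an initial object, and that Proposition 3.20 forbids any further initial objects once one moves away from it. Together these force the cluster class $[(M,h)]$ to possess a single initial object, which is exactly the meaning of ``length one and height zero.''

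First I would verify that the naked object $h=(M,(\eta_{1},\ldots,\eta_{n}))$ is itself an initial object. Fix $k\in[1,n]$. The first generation mutation formulas of Definition 3.9 give $\mu^{R}_{k}(\eta_{k})=\varepsilon_{k,D(\eta_{k})}\circ \eta_{k}^{-1}$ and $\mu^{L}_{k}(\eta_{k})=\eta_{k}^{-1}\circ \varepsilon_{k,C(\eta_{k})}$. In both cases $\eta_{k}^{-1}$ occurs as a factor, so in the sense of Definition 3.18 (1) we have $\eta_{k}\mid_{k}\mu^{R}_{k}(\eta_{k})$ and $\eta_{k}\mid_{k}\mu^{L}_{k}(\eta_{k})$; that is, $\eta_{k}$ is an initial morphism at $k$. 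Since this holds for every $k$, the object $h$ is initial, which also identifies it as the vertex of the prospective cone.

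Next I would show that $h$ is the \emph{only} initial object in $[(M,h)]$. Every object of the class is obtained from $h$ by a sequence of categorical mutations, and by Remark 3.10 mutations in distinct directions commute and act only on the corresponding component; hence any object of the class has $j$-th component equal either to the original naked $\eta_{j}$ or to an iterate $\mu^{R}_{j^{t}}(\eta_{j})$ or $\mu^{L}_{j^{t}}(\eta_{j})$ with $t\geq 1$. Initiality is a componentwise condition, so an object can be initial only if each of its components is an initial morphism. For any mutated component ($t\geq 1$), the naked case of Proposition 3.20 (its last paragraph, yielding $\mu_{k^{t}}(h)\mid_{k}\mu_{k^{t+1}}(h)$ for all $t$) shows that none of $\mu^{R}_{k^{t}}(\eta_{k})$, $\mu^{L}_{k^{t}}(\eta_{k})$ with $t\geq 1$ is an initial morphism. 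Therefore any object other than $h$ carries at least one non-initial component and cannot be initial.

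Having exactly one initial object $h$, the cluster class admits a zigzag presentation of length one, and the height is zero by the convention of Definition 3.18 (2) for the case $n=1$. Concretely, in each direction $k$ the right iterates $\mu^{R}_{k^{t}}(h)$ form one arm and the left iterates $\mu^{L}_{k^{t}}(h)$ the other, both emanating from the single vertex $h$, which is precisely the open-up cone asserted in the statement. I expect the only delicate point to be the reduction from the global cluster class to the per-direction chains: one must argue that initiality is decided componentwise and that Proposition 3.20 applies to each naked component separately, rather than re-running the mutation analysis for the full multi-index object. Once that reduction is in place the conclusion is immediate from Proposition 3.20.
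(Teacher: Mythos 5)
Your proof is correct and follows essentially the same route as the paper: the paper's own argument is exactly that nakedness gives $h|_{k}\,\mu^{R}_{k^{t}}(h)$ and $h|_{k}\,\mu^{L}_{k^{t}}(h)$ for all $k$ and $t$, and then Proposition 3.20 rules out any other initial object. The only difference is that you spell out two steps the paper leaves implicit --- the verification that a naked $h$ is itself initial, and the componentwise reduction via Remark 3.10 --- which is a welcome elaboration rather than a different approach.
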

\begin{proof} Since $h$ is a naked object then $h|_{k} \mu^{R}_{k^{t}}(h)$ and $h|_{k} \mu^{L}_{k^{t}}(h)$ for every $k\in [1, n]$ and $t \in \mathbb{Z}_{\geq 0}$. Hence from Proposition 3.20, $h$ must be the only initial object in $[(M, h)]$.
\end{proof}
\begin{lem} Every  cluster class $[(M, \eta)]$ has a zigzag presentation of a finite length and of a hight one. In particular we have

\begin{enumerate}
  \item [(1)] Initial objects are always exist;
 \item [(2)] The hight of any zigzag presentation is at most one;
 \item [(3)] Number of initial hyperbolic objects is finite.
\end{enumerate}
\end{lem}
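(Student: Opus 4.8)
The plan is to reduce the statement to a single ``branch'' and then to the analysis of one integer walk. By Remark 3.10 categorical mutations in distinct directions commute, and by Corollary 3.16 every mutation sequence has a unique reduced form; hence the cluster class $[(M,\eta)]$ splits direction by direction, and it suffices to fix one index $k$ and study the bi-infinite chain $(\nu_t)_{t\in\mathbb{Z}}$ of objects obtained by iterating $\mu^R_k$ and $\mu^L_k$, where $\nu_t$ records the $k$-th morphism. Writing each such morphism in reduced form as a naked core flanked on the left and right by words in $\xi$ and $\varepsilon$ (as made explicit in Example 3.19), I would set $\ell(t)\ge 0$ to be the total number of these flanking factors. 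The mutation rules (3.11) and (3.13) invert the morphism and adjoin a single factor $\varepsilon$ or $\xi$, its name alternating with the parity of $t$, so $\ell(t+1)=\ell(t)\pm 1$; the chain is thus a unit-step walk, and by Definitions 3.17 an object is \emph{initial} precisely when it is a local minimum of $\ell$, both of its neighbours being divisible by it.

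For Part (1) I would observe that such a walk must have a local minimum: since $\ell\ge 0$ it cannot descend forever, while a walk that never descends is monotone increasing on all of $\mathbb{Z}$ and would force $\ell\to -\infty$ as $t\to -\infty$; either alternative is absurd, so some descent is followed by an ascent. When the naked core itself belongs to the class this minimum is the naked object of Corollary 3.21.

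The heart of the proof is Part (2), for which I would establish the key fact that \emph{two consecutive growth steps force monotone growth thereafter}. I would prove this by tracking the outermost flanking factors: a growth step prepends $\varepsilon$ or $\xi$ without cancellation, and a short computation with the alternating names $c_t$ shows that after two growths the rightmost factor equals $c_t^{-1}$ while the factor about to be adjoined is again $c_t$, so no cancellation is ever possible and the walk keeps rising; this is exactly the accumulation already isolated in the proof of Proposition 3.20. Consequently, leaving an initial object the walk first rises by one step to a peak; from that peak a further growth escapes to $+\infty$ with no further minimum, whereas a cancellation lands, by the same parity computation, on the next minimum two steps away. Thus consecutive initial objects are exactly two mutations apart with a single non-initial object between them, and the height is at most one.

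Finally, for Part (3) I would follow the length $a(t)$ of the left flank alone. The recursion $(a,b)\mapsto(b\pm1,a)$ read off from (3.11) shows that between consecutive minima, necessarily of equal height by Part (2), the value $a$ strictly decreases by one; since $a\ge 0$ this can happen only finitely often before the walk escapes, and the symmetric bookkeeping on the right flank controls the opposite direction. Hence there are finitely many minima, so finitely many initial objects and a zigzag presentation of finite length. The main obstacle is precisely Part (2): the coarse $\pm 1$ length count alone does not forbid an ascent of height two, and ruling it out requires the parity-sensitive analysis of which factor each mutation adjoins, equivalently the accumulation mechanism of Proposition 3.20, rather than the length bookkeeping by itself.
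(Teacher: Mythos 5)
Your proposal is correct and takes essentially the same route as the paper: your unit-step walk $\ell(t)$ is the paper's count of the multiplicities of the flanking factors $\varepsilon^{\pm 1}_{k},\xi^{\pm 1}_{k}$ (divisibility $=$ one-step drop), your parity lemma that two consecutive growth steps force monotone growth is exactly the accumulation mechanism isolated in Proposition 3.20 and redeployed in the paper's proof of Part (2) (where $h_{2}$ takes the form $\xi_{k,-}h\varepsilon^{-1}_{k,-}$ or $\varepsilon_{k,-}h\xi^{-1}_{k,-}$ and $h_{3}$ can no longer divide $h_{2}$), and your left-flank bookkeeping $(a,b)\mapsto(b\pm 1,a)$ for Part (3) is a crisper version of the paper's count of how often the multiplicity of the distinguished factor $\alpha$ can decrease before the walk escapes. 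The walk formalization makes the paper's somewhat informal argument more explicit but introduces no genuinely different idea and no gap.
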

\begin{proof} The proof is divided into three steps

\begin{enumerate}
\item [(1)] We will prove this part by showing that, every hyperbolic object is divisible by some initial object. Let $h=((h'_{1}, \ldots, h'_{n}), W, (h_{1}, \ldots, h_{n}))$ be a hyperbolic object in $[(M, \eta)]$,  where $W \in \{M, \Theta(M)\}$. If $h$ is not an initial hyperbolic object of $[(M, \eta)]$ then  there exists a hyperbolic object $g_{1}^k=((g'_{1,1}, \ldots, g'_{1, n}), W_{1}, (g_{1,1}, \ldots, g_{1, n}))$  such that $g^{k}_{1}|_{k}h$ for some  $k\in [1, n]$. Without loss of generality we can assume that $h=\mu^{R}_{k}(g_{1}^{k})$ (the proof of the case $h=\mu^{L}_{k}(g_{1}^{k})$ is similar).  Thus $h_{k}$ can be written in the form $h_{k}=\alpha_{1, k}g_{1, k}^{\pm 1}$ where $\alpha_{1, k}\in \{\varepsilon^{\pm 1}_{k, M}, \xi^{\pm 1}_{k, M}, \varepsilon^{\pm 1}_{k, \Theta(M)}, \xi^{\pm 1}_{k, \Theta(M)}\}$. If $g_{1}^k$ is an initial hyperbolic object then the proof of existence is finished. Assume that $g_{1}^k$ is not initial hyperbolic object. Then there is   $g_{2}^k=((g'_{2,1}, \ldots, g'_{2, n}), W_{2}, (g_{2,1}, \ldots, g_{2, n}))$  such that $g^{k}_{2}|_{k}g_{1}^k$, so $g_{1, k}=\alpha_{2, k}g_{2, k}^{\pm 1}$ for some  $\alpha_{2, k}\in \{\varepsilon^{\pm 1}_{k, M}, \xi^{\pm 1}_{k, M}, \varepsilon^{\pm 1}_{k, \Theta(M)}, \xi^{\pm 1}_{k, \Theta(M)}\}$. Again, without loss of generality,  we can assume that  $h=\mu^{R}_{k^{2}}(g_{2,k})$. We can continue in this process for only finite number of steps because $h$ must be at most a finite product, so the length of this process is proportional to the multiplicity of  $\xi_{k}^{\pm 1}$ and $\varepsilon_{k}^{\pm 1}$ dividing $h_{k}$. Therefore we obtain a sequence of morphisms $g_{1, k}, \ldots, g_{d_{k}, k}$ such that $g_{j+1, k}|_{k}g_{j, k}, j\in \{1, \ldots, d_{k}\}$. Since $k$ was a random element from $[1,n]$ so we can apply the same process on each morphism $h_{1}, \ldots, h_{n}$, we will end up to the hyperbolic object  $h_{0}=((g'_{d_{1}, 1}, \ldots, g'_{d_{n}, n}), M, (g_{d_{1}, 1}, \ldots, g_{d_{n}, n}))$.
    To show that $h_{0}$ is initial object, let $l=((l'_{1}, \ldots, l'_{n}), W, (l_{1}, \ldots, l_{n}))$ be a hyperbolic object such that $l=\mu^{L}_{k}(h_{0})$ for some $k\in [1, n]$. If $h_{0}$ is a naked object then we must have $h_{0}|_{k}l$. If $h_{0}$ is not a naked object and  $l|_{k}h_{0}$, so since $\mu^{R}_{k}(l)=h_{0}$ then $l$ must be one of $g_{1, k}, \ldots, g_{d_{k}, k}$  which contradicts with the fact  that $h_{0}$ is not divisible by any hyperbolic object that produces it by a single categorical right mutation.\\
    One can see, in the case of $h=\mu^{R}_{k}(g_{1}^{k})$ (respect to $h=\mu^{L}_{k}(g_{1}^{k})$) then $h$ will appear on the right (respect to left) of $h_{0}$ on the zigzag presentation.

\item [(2)] Suppose that $[(M, h)]$ is cluster class with zigzag presentation of hight bigger than one, without loss of generality assume that the hight is two. Let $h, h_{1}, h_{2}$ and $h_{3}$ be connected hyperbolic objects such that both of $h$ and $h_{3}$ are initial objects. Then the part of the zigzag presentation that contains $h, h_{1}, h_{2}$ and $h_{3}$ would look as follows

     \begin{equation}\label{}
    \nonumber    \xymatrix{ &&&\cdot_{h_{2}}&& \cdot \\
  \cdot & &\cdot_{h_{1}}\ar[ur]^{k} & & \cdot_{h_{3}}\ar[ul]^{k}\ar[ur]^{k}&&    \\
      &\cdot_{h} \ar[ur]^{k} \ar[ul]^{k} &&}
     \end{equation}

     Since $h|_{k}h_{1}$ and $h_{1}=\mu^{R}_{k}(h)$ then the multiplicity of one of the morphisms $\tilde{\xi}=\{\varepsilon^{\pm 1}_{k, M}, \xi^{\pm 1}_{k, M}, \varepsilon^{\pm 1}_{k, \Theta(M)}, \xi^{\pm 1}_{k, \Theta(M)}\}$ has increased by one. Now applying $\mu^{R}_{k}$ one more time will result in increasing of the multiplicity of another morphism from the set $\tilde{\xi}$. Hence by definition of categorical right mutation we must have $h_{2}$ has one of the following forms $\xi_{k, -}h \varepsilon_{k, -}^{-1}$ or $\varepsilon_{k, -} h \xi^{-1}_{k, -}$ which means $h_{3}$ has one of the expressions  $\varepsilon_{k, -}^{2} h^{-1} \xi^{-1}_{k, -}$ or $\xi^{2}_{k, -}h^{-1} \varepsilon_{k, -}^{-1} $. In both $h_{2}$ and $h_{3}$ the multiplicities of the morphisms $\{\varepsilon^{\pm 1}_{k, M}, \xi^{\pm 1}_{k, M}, \varepsilon^{\pm 1}_{k, \Theta(M)}, \xi^{\pm 1}_{k, \Theta(M)}\}$ can not be reduced any more. Therefore $h_{3}$ does not divide $h_{2}$ at $k$ which means $h_{3}$ can not be initial object.

\item [(3)]
In the proof of this part we will also  explain how the non naked initial morphisms occur. Let $h$ be non naked initial object. Then $h$ divides both of $h'=\mu^{L}_{k}(h)$ and $h''=\mu^{R}_{k}(h)$ which means that applying $\mu^{R}_{k}$ on $h'$ reduces the multiplicity of one of the morphisms, say $\alpha \in \{\varepsilon^{\pm 1}_{k, M}, \xi^{\pm 1}_{k, M}, \varepsilon^{\pm 1}_{k, \Theta(M)}, \xi^{\pm 1}_{k, \Theta(M)}\}$. Then applying $\mu^{R}_{k}$ on $h$  increases the multiplicity of another  morphisms say   $\beta \in \{\varepsilon^{\pm 1}_{k, M}, \xi^{\pm 1}_{k, M}, \varepsilon^{\pm 1}_{k, \Theta(M)}, \xi^{\pm 1}_{k, \Theta(M)}\}\setminus \{\alpha\}$. Applying $\mu^{R}_{k}$ again on $h''$ will either increase or decrease the multiplicity of the morphism $\alpha$. If it is increasing then $h$ is a right corner, i.e., $h$ is a right end initial object and no any other initial objects on its right side, thanks to Part (2) of this lemma. If it is decreasing, it would be decreasing the multiplicity of $\alpha$ then it creates another initial object. Continue in applying $\mu^{R}_{k}$ will produce number of initial objects that is proportionate to the multiplicity of $\alpha$ which is finite.  Then number of initial objects to the right of $h$ is finite. In a quite similar way we can prove that the number of initial objects to the left of $h$ is also finite. Which means that the total number of the initial objects must be finite.

\end{enumerate}
\end{proof}

\begin{cor} Every hyperbolic object is connected to  unique right end  and  left end initial objects. The Proof of Lemma 3.22 provides  a precise technique to identify the unique right (respect to left) end initial object of every cluster class.

\end{cor}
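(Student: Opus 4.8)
The plan is to read the corollary off the three parts of Lemma 3.22, the only genuinely new content being the \emph{uniqueness} of the two extreme initial objects and the observation that the whole cluster class connects to them. I would fix a cluster class $[(M, \eta)]$ and analyse it one mutation direction $k \in [1, n]$ at a time, which is legitimate since mutations in distinct directions commute and act on distinct components (Remark 3.10). In the direction $k$ the objects of the class are laid out along the bi-infinite chain $\ldots, \mu^{L}_{k^{2}}(h), \mu^{L}_{k}(h), h, \mu^{R}_{k}(h), \mu^{R}_{k^{2}}(h), \ldots$, which is exactly the $\mathbb{Z}$-indexed picture of the exchange graph of Example 2.14 transported to the categorical setting and is what the zigzag diagrams of Examples 3.19 render. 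Consequently the initial objects of the class acquire a genuine left-to-right linear order along this chain.

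The existence half is furnished by Part (1) of Lemma 3.22: peeling the factors $\varepsilon^{\pm 1}_{k, -}, \xi^{\pm 1}_{k, -}$ off each component until a naked or initial morphism remains produces, for any hyperbolic object $h$, an initial object $h_{0}$ with $h_{0}|_{k}h$, so $h$ is connected to at least one initial object. Part (3) guarantees that the set of initial objects is finite and Part (2) that the height is at most one, so the zigzag is a one-dimensional chain rather than a branching configuration and the words \emph{leftmost} and \emph{rightmost} are meaningful. A finite nonempty linearly ordered set has a unique least and a unique greatest element, and these are precisely the left-end and right-end initial objects; since the cluster class is connected through mutations and these are the two extremities of the chain, every hyperbolic object is connected to both, with no rival candidates for an ``end'' object, which is the asserted uniqueness. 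For the constructive claim I would make explicit the algorithm already implicit in the proof of Lemma 3.22: given $h$, peel down to the initial object $h_{0}$ beneath it, then, following the analysis in Part (3), repeatedly apply $\mu^{R}_{k}$ while watching the multiplicity of the distinguished factor $\alpha$; an increase signals a right corner (the right-end initial object), a decrease signals a further initial object to the right, and finiteness of the multiplicities forces termination, the mirror procedure with $\mu^{L}_{k}$ terminating at the left-end initial object.

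The step I expect to carry the weight is the uniqueness assertion, which stands or falls on the chain being \emph{linearly} ordered in each direction rather than branching. This is exactly where Part (2) of Lemma 3.22 is indispensable: height at most one excludes the configurations that would make ``leftmost'' and ``rightmost'' ambiguous, while Proposition 3.20 (no initial object strictly between $h$ and $h'$ whenever $h|_{k}h'$) is what prevents the finitely many initial objects from being reshuffled out of their order. Everything beyond this is routine bookkeeping on top of the existence and finiteness already established.
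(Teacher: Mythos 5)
Your proposal is correct and takes essentially the same route as the paper: existence of an initial object beneath a given $h$ via Part (1) of Lemma 3.22, exclusion of branching via Part (2), termination via Part (3), and the same peel-then-walk identification algorithm extracted from the proof of Lemma 3.22. Your packaging of uniqueness as the unique extremes of a finite linearly ordered set of initial objects is just a cleaner restatement of the paper's explicit divisibility-test walk (stopping when $g|_{k}\mu^{R}_{k}(g)|_{k}\mu^{R}_{k^{2}}(g)$, respectively its left-mutation mirror), so the two arguments coincide in substance.
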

\begin{proof} Let $h$ be a non initial object of some cluster class. Then following the proof of Part (1) of Lemma 3.22 leads us to an initial hyperbolic object (lies to left or right  of $h$ in the zigzag presentation), let's call it $h'$. If there are some hyperbolic objects  sandwiched between $h$ and $h'$, such that there is a natural number $m$  so that $h'|_{k}\mu^{R}_{k}(h')|_{k}\mu^{R}_{k^{2}}(h')|_{k}\ldots \mu^{R}_{k^{m}}(h')|_{k}h$ (respect to  $\mu^{L}_{k^{j}}(-), j=1,\ldots, m$) hence $h'$ is the right (respect to left) end initial hyperbolic object we looking for, thanks to Part (2) of Lemma 3.22.
If there is no any objects sandwiched between $h$ and $h'$, then we have two cases: First case if $h|_{k}\mu^{R}_{k}(h)$ (respect to categorical $\mu^{L}_{k}(-)$) hence again using  Part (2) of Lemma 3.22 we have $h'$ is the right (respect to left) end initial object. The second case is when $\mu^{R}_{k}(h)|_{k}h$, then $\mu^{R}_{k}(h)$ (respect to $\mu^{L}_{k}(-)$) is also an initial object, again thanks to Part (2) of Lemma 3.22. Then we keep applying right categorical mutations until we obtain an object that finishes  the right (respect to left) side of the zigzag, in other words until we obtain an object $g$ such that  $g|_{k}\mu^{R}_{k}(g)|_{k}\mu^{R}_{k^{2}}(g)$ (respect to categorical $\mu^{L}_{k}(-)$) and in this case $g$ is the right (respect to left) end initial object. This process is finite due to Part (3) of Lemma 3.22. In case of $h$ is an initial object, then if $\mu^{R}_{k}(h)|_{k}\mu^{R}_{k^{2}}(h)$ (respect to $\mu^{L}_{k}(-)$) then $h$ is the right (respect to left) end initial object. If there are other initial objects that lies to right (respect left) of $h$ then apply right (respect to left) mutation on $h$ until we obtain an object $g$  such that  $g|_{k}\mu^{R}_{k}(g)|_{k}\mu^{R}_{k^{2}}(g)$ (respect to categorical $\mu^{L}_{k}(-)$) which would be the right (respect to left) end initial object.
\end{proof}

\begin{lem} Every mutation category $\mathcal{H(P)}$ contains a full subcategory that is  a clustered hyperbolic category.
\end{lem}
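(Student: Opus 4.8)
The plan is to exhibit a non-empty full subcategory $\mathfrak{C}$ of $\mathcal{H}(\mathcal{P})$ all of whose objects satisfy the defining condition of Definition 3.14(3): the underlying skew Laurent pair $(M,f)$ must lie in the skew Laurent category of one and only one categorical preseed mutationally equivalent to $\mathcal{P}$. In the language of Remark 3.17 this amounts to selecting a sub-collection of skew Laurent objects that contains no two mutationally equivalent objects, or equivalently so that every hyperbolic class meeting the collection has exactly two elements. So the whole problem reduces to a combinatorial selection among the hyperbolic objects of $\mathcal{H}(\mathcal{P})$, which is precisely what the zigzag machinery of Definitions 3.18 was built to control.

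First I would partition $Obj.\mathcal{H}(\mathcal{P})$ into cluster classes and bring in the structure results for each class. By Lemma 3.22 every cluster class $[(M,\eta)]$ admits a zigzag presentation of finite length and height one, with only finitely many initial objects, and by Corollary 3.23 each hyperbolic object is attached to a \emph{unique} left end and a \emph{unique} right end initial object. This lets me picture a class as a finite chain of open-up cones whose vertices are its initial objects, each cone being (by Corollary 3.21, in the naked case) a complete zigzag of length one and height zero. I would then build $\mathfrak{C}$ by choosing, in each cluster class, the open-up cone based at its left end initial object --- truncated, where several initial objects occur, so that it does not run into the cone of the neighbouring initial object --- and taking the full subcategory of $\mathcal{H}(\mathcal{P})$ on the union over all classes of the hyperbolic objects lying over these cones. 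Because the $n$ mutation directions act independently (Remark 3.10), these per-class, per-direction choices assemble into a coherent family of preseeds, and $\mathfrak{C}$ is non-empty since every cone contains at least its vertex.

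The heart of the argument, and the step I expect to be the main obstacle, is verifying that no selected pair $(M,f)$ occurs in two distinct preseeds, i.e. that the chosen sub-collection contains no mutationally equivalent objects. The danger is exactly the boundary object shared by two adjacent cones (as in the length-two example of Examples 3.19, where a single object is reached from one initial object by a right mutation and from the next by a left mutation). Here the two structural bounds do the real work: Proposition 3.20 guarantees that within a single cone there are no intermediate initial objects, so consecutive objects differ by exactly one mutation and the cone is traversed monotonically, while the height-one bound of Lemma 3.22(2) ensures that each boundary object is shared by at most two cones, making the truncation well defined, and the uniqueness of the end initial objects from Corollary 3.23 makes the cone-based selection disjoint and unambiguous across classes; together they prevent the zigzag from folding back to reproduce an already-listed object. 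Once this non-repetition is established, each $(M,f)$ in $\mathfrak{C}$ visibly lies in a single preseed, so by Definition 3.14(3) the full subcategory $\mathfrak{C}$ is a clustered hyperbolic category, which completes the proof.
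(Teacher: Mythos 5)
Your opening reduction --- partition the hyperbolic objects into cluster classes and use Lemma 3.22 and Corollary 3.23 to organize each class around its finitely many initial objects --- matches the paper's strategy, and your choice of left rather than right end initial objects is immaterial (the paper itself notes either works). But your construction has a genuine gap at its core. The uniqueness demanded by Definition 3.14(3) is that the pair $(M,f)$ belong to the skew Laurent category of one and only one categorical preseed ranging over a \emph{fixed mutation class of preseeds}; it is not a property of which objects you happen to list in $\mathfrak{C}$. Truncating cones therefore changes nothing: if the initial skew Laurent category $\mathcal{A}\langle\Theta\rangle$ contains two mutationally equivalent objects (the only situation in which $\mathcal{H}(\mathcal{P})$ itself fails to be clustered, by Remark 3.17), then every pair in their common orbit already occurs in at least two preseeds mutationally equivalent to $\mathcal{P}$, and no selection of such pairs --- truncated or not --- can satisfy the condition relative to $\mathcal{P}$. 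What the paper does, and what your proposal is missing, is a \emph{re-basing} step: it defines a new categorical preseed $\mathcal{P}_{0}=(\Theta,\xi,\mathcal{A}_{0}\langle\Theta\rangle)$ whose skew Laurent category contains exactly one object per cluster class, namely the unique right end initial object supplied by Corollary 3.23, and then takes $\mathfrak{C}(\mathcal{P})$ to be \emph{all} hyperbolic objects of $\mathcal{P}_{0}$ --- the full, untruncated mutation orbit --- so that clustered-ness becomes uniqueness relative to preseeds mutationally equivalent to $\mathcal{P}_{0}$ (Lemma 3.25). Your truncated cones are not closed under mutation, so they are not the hyperbolic objects of any preseed, and you exhibit no family of preseeds relative to which your selected pairs occur exactly once; the phrase ``assemble into a coherent family of preseeds'' via Remark 3.10 does not substitute for constructing $\mathcal{P}_{0}$.

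The second gap is your final claim that Proposition 3.20, the height-one bound, and the uniqueness of end initial objects ``prevent the zigzag from folding back to reproduce an already-listed object.'' Those results control where initial objects can sit inside a zigzag; they do not exclude the possibility that a nontrivial sequence of categorical mutations returns a skew Laurent object to itself, i.e.\ that the same pair $(M,\eta)$ recurs within its own orbit. The paper treats this as a separate case (Case 2 of the proof of Lemma 3.25) and kills it by explicit computation: a mutation $\mu^{R}_{i_{t}}$ repeated an even number $2m$ of times in a fixing sequence forces $\eta_{i_{t}}=\mathrm{id}$, and an odd number $2m+1$ forces $\eta_{i_{t'}}=\xi_{i_{t'}}$, contradicting the standing assumptions on $\eta$. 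Likewise the tracking-back of one object to two distinct initial objects (Case 1) is refuted by the uniqueness of the right end initial object --- a step you invoke correctly, but which only functions once the initial category has been replaced by $\mathcal{A}_{0}\langle\Theta\rangle$. Without the re-basing and without the Case 2 computation, the non-repetition at the heart of your argument remains unproven.
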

\begin{proof}
Fix $\mathcal{P}=(\Theta, \xi, \mathcal{A}\langle \Theta \rangle)$ to be the categorical preseed of $\mathcal{H(P)}$. Let $\mathcal{A}_{0}\langle \Theta \rangle$ be the full subcategory  of $\mathcal{A}\langle \Theta \rangle$ with  objects given by:\\ $(M, \eta)$ is an object in $\mathcal{A}_{0}\langle \Theta \rangle$ if and only if  $(\mu(\eta), M, \eta)$ is a right end initial object of the cluster class $[(M, \eta)]_{0}$. Now consider the categorical pressed $\mathcal{P}_{0}=(\Theta, \xi, \mathcal{A}_{0}\langle \Theta \rangle)$ and the category $\mathfrak{C}(\mathcal{P})$  with objects are all the hyperbolic objects of $\mathcal{P}_{0}$. The morphisms between any two objects $(g',M,g)$ and $ (h', M',h)$ in  $\mathfrak{C}(P)$ are given by
\begin{equation}
\nonumber Mor._{\mathfrak{C}(P)}((g',M,g), (h', M',h)):=Mor._{\mathcal{H}(P)}((g',M,g), (h', M',h)).
\end{equation}

So by definition, $\mathfrak{C}(\mathcal{P})$ is a full subcategory of $\mathcal{H}(\mathcal{P})$. The proof of $\mathfrak{C}({\mathcal{P}})$ is a clustered hyperbolic category is due to  the following Lemma.
\end{proof}

\begin{lem} For every hyperbolic object $(\gamma, M, \eta)$ in the category  $\mathfrak{C}(\mathcal{P})$ there exists a unique categorical preseed $\mathcal{S}$, mutationally equivalent to the initial categorical preseed $\mathcal{P}_{0}$, such that  $(\gamma, M, \eta)$ is an object in one of the the hyperbolic categories  $\mathcal{H}^{R}(\mathcal{S})$ or $\mathcal{H}^{L}(\mathcal{S})$.
\end{lem}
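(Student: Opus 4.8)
The plan is to separate the formal existence assertion from the substantive uniqueness assertion, reducing the latter to the rigidity of zigzag presentations (Proposition 3.20, Lemma 3.22, Corollary 3.23) together with the uniqueness of reduced mutation sequences (Corollary 3.13). Existence is immediate from the construction in Lemma 3.24: the objects of $\mathfrak{C}(\mathcal{P})$ are the hyperbolic objects of $\mathcal{P}_{0}$, and by Definition 3.14(1) such an object lies in $H(\mathcal{S})=Obj.\mathcal{H}^{R}(\mathcal{S})\cup Obj.\mathcal{H}^{L}(\mathcal{S})$ for at least one categorical preseed $\mathcal{S}$ mutationally equivalent to $\mathcal{P}_{0}$. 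Thus the content is entirely in showing that $\mathcal{S}$ is forced.

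For uniqueness, suppose $(\gamma,M,\eta)$ lies in $H(\mathcal{S})$ and in $H(\mathcal{S}')$ with both $\mathcal{S},\mathcal{S}'$ mutationally equivalent to $\mathcal{P}_{0}$, so that $(M,\eta)$ is a skew Laurent object of each of $\mathcal{A}_{\mathcal{S}}\langle\Theta\rangle$ and $\mathcal{A}_{\mathcal{S}'}\langle\Theta\rangle$. By Corollary 3.13 write $\mathcal{S}=\mu(\mathcal{P}_{0})$ and $\mathcal{S}'=\mu'(\mathcal{P}_{0})$ for the unique reduced mutation sequences, so that $(M,\eta)=\mu(N,\nu)=\mu'(N',\nu')$ with $(N,\nu),(N',\nu')\in Obj.\mathcal{A}_{0}\langle\Theta\rangle$. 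Since mutations are invertible (Lemma 3.12(2)), $(N,\nu)$ is mutationally reachable from $(M,\eta)$ and conversely, so $[(N,\nu)]_{0}=[(M,\eta)]_{0}$; but membership in $\mathcal{A}_{0}\langle\Theta\rangle$ means exactly that the hyperbolic object of $(N,\nu)$ is the right end initial object of $[(N,\nu)]_{0}$. By the uniqueness in Corollary 3.23 this right end initial object depends only on $(M,\eta)$; calling it $R$, the same argument gives $(N',\nu')=R$, hence $(N,\nu)=(N',\nu')=R$.

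It now suffices to prove $\mu=\mu'$, since then $\mathcal{S}=\mu(\mathcal{P}_{0})=\mu'(\mathcal{P}_{0})=\mathcal{S}'$. Both words satisfy $\mu(R)=(M,\eta)=\mu'(R)$, so the task is to show that distinct reduced mutation words applied to the right end initial object $R$ yield distinct skew Laurent objects. I would argue one direction $k\in[1,n]$ at a time: by Lemma 3.22 the class $[(M,\eta)]_{0}$ has a zigzag presentation of finite length and height at most one, and the multiplicity bookkeeping of Proposition 3.20 and of the proof of Lemma 3.22 shows that successive categorical mutations at $k$ strictly change the multiplicities of $\xi^{\pm 1}_{k,-}$ and $\varepsilon^{\pm 1}_{k,-}$ accumulating around the naked morphism. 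Hence the $k$-th morphism $\eta_{k}$ of $(M,\eta)$ recovers both the exponent $n_{k}$ and the right/left type of the $k$-th factor of $\mu$; running over all $k$ recovers $\mu$, giving $\mu=\mu'$ and $\mathcal{S}=\mathcal{S}'$. The $\gamma$-component of the triple then records whether $(\gamma,M,\eta)$ is presented in $\mathcal{H}^{R}(\mathcal{S})$ or in $\mathcal{H}^{L}(\mathcal{S})$.

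I expect the last rigidity step to be the main obstacle: turning the multiplicity control of Proposition 3.20 and Lemma 3.22 into a complete invariant — the multiplicities of the surrounding $\xi^{\pm 1}_{k,-},\varepsilon^{\pm 1}_{k,-}$ together with the right/left parity — that separates all vertices of the zigzag and thereby distinguishes the reduced words reaching them from $R$. This needs careful treatment of the two cases determined by the parity of the mutation length, and of how the classes $[(M,\eta)]_{0}$ and $[(M,\eta)]_{1}$ of Definition 3.16 sit inside the full orbit $\overline{(M,\eta)}$, which is also what underlies condition (3) of Remark 3.17.
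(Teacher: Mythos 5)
Your reduction is correct as far as it goes, and its first half coincides with the paper's own Case 1: if $(M,\eta)$ lies in the skew Laurent categories of two preseeds $\mathcal{S}$ and $\mathcal{S}'$, it can be tracked back to two right end initial objects of the same cluster class, and the uniqueness in Corollary 3.23 forces them to coincide. The genuine gap is exactly the step you flag yourself: showing that two distinct reduced words $\mu\neq\mu'$ with $\mu(R)=\mu'(R)=(M,\eta)$ are impossible. Your proposed invariant --- the multiplicities of $\xi^{\pm 1}_{k,-}$ and $\varepsilon^{\pm 1}_{k,-}$ accumulating around a naked morphism --- is an invariant of the \emph{formal expression} of $\eta_{k}$, not of the morphism itself, and Proposition 3.20 and Lemma 3.22 control only the expressions. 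Two different expressions can denote the same morphism of $\mathcal{A}$: for instance, whenever $\xi_{k}$ acts on $M$ by a scalar (compare the Verma objects in Example 3.19(4)), one has $\xi^{m}_{k,-}\circ\eta_{k}\circ\xi^{-m}_{k,-}=\eta_{k}$ as morphisms even though the multiplicity count differs by $2m$. So multiplicity bookkeeping cannot, by itself, separate the vertices of the zigzag as actual skew Laurent objects, and the ``complete invariant'' you hope for does not exist at this level of generality without an extra non-degeneracy input.

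The paper closes precisely this hole by a direct computation (its Case 2): a nontrivial loop $\mu^{R}_{i_{1}}\cdots\mu^{R}_{i_{j}}(M,\eta)=(M,\eta)$ in which the direction $i_{t}$ occurs an even number $2m$ of times yields $\xi^{m}_{i_{t},\Theta_{i_{t}}(M)}\,\eta_{i_{t}}\,\xi^{-m}_{i_{t},-}=id$, hence $\eta_{i_{t}}(w)=w$ for all $w\in M$, contradicting that $\eta_{i_{t}}$ differs from the identity; an odd number $2m+1$ of occurrences forces $\eta_{i_{t'}}=\xi_{i_{t'}}$, contradicting that $\eta_{i_{t'}}$ differs from $\xi_{i_{t'}}$. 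In other words, the rigidity you need is not a combinatorial statement about reduced words but an equality-of-morphisms statement that must be refuted pointwise, using explicit non-degeneracy assumptions on the components of $\eta$. To complete your argument you would have to insert this computation (or an equivalent hypothesis) exactly where you pass from ``the exponent $n_{k}$ and the right/left type are recoverable from the expression of $\eta_{k}$'' to ``they are recoverable from the morphism $\eta_{k}$ itself''; everything before that point is a faithful, slightly more systematic rephrasing of the paper's proof.
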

\begin{proof} Let $H(\mathcal{S})=Obj.H^{R}(\mathcal{S})\cup Obj.H^{L}(\mathcal{S})$. Suppose that there are two different categorical preseeds $\mathcal{S}$ and $\mathcal{S'}$ such that
\begin{equation}\label{}
  (\gamma, M, \eta) \in H(S)\cap H(\mathcal{S'}).
\end{equation}
Then we have two cases
\begin{description}
\item[Case 1] Since the hyperbolic object  $(\gamma, M, \eta)$ belongs to two hyperbolic categories of two different  categorical preseeds, then it can be tracked back to two different initial objects say $h$ and $h'$ in $\mathcal{P}_{0}$. Hence $(\gamma, M, \eta)$ can be obtained from   $h$ or $h'$ by applying two different sequences of mutations. Therefore  $h$ and $h'$ are related by a sequence of mutations which means that both of $h$ and $h'$ are right end initial objects in the same zigzag  which is a contradiction with the uniqueness of right end initial objects.

  \item[Case 2] The zigzag presentation of the cluster class of $(\gamma, M, \eta)$ contains two copies of $(\gamma, M, \eta)$ which means there is a non-trivial sequence of right (or left) mutations  $\mu^{R}_{i_{1}}\cdots \mu^{R}_{i_{j}}$ such that $\mu^{R}_{i_{1}}\cdots \mu^{R}_{i_{j}}(M, \eta)=(M, \eta)$, where $\eta=(\eta_{1}, \cdots, \eta_{n})$. Which leads to one of the following two types of contradictions.
      \begin{itemize}
        \item Without loss of generality, let $\mu^{R}_{i_{t}}$ be a repeated mutations even number of times, say $2m$ for some natural number $m$,  inside the sequence  $\mu^{R}_{i_{1}}\cdots \mu^{R}_{i_{j}}$. Then we must have
            \begin{equation}\label{}
         \nonumber \xi^{m}_{i_{j}, \Theta_{i_{t}} (M)} \eta_{i_{t}} \xi^{-m}_{\Theta_{i_{t}}, M}=id_{\Theta_{i_{t}} (M)}
            \end{equation}
            which means that for every  $w \in M,$
            \begin{equation}\label{}
         \nonumber   \eta_{i_{t}}(w) =\xi^{-m}_{i_{t}, \Theta_{i_{t}} (M)}id_{\Theta_{i_{t}} (M)}\xi^{m}_{\Theta_{i_{t}}, M}(w)=w
            \end{equation}
            which contradicts with the fact that $\eta_{i_{t}}$ is different from the identity morphism.
        \item  Again without loss of generality, let $\mu^{R}_{i_{t'}}$ be a repeated mutations odd number of times, say $2m+1$ for some natural number $m$,   inside the sequence  $\mu^{R}_{i_{1}}\cdots \mu^{R}_{i_{j}}$. Then we must have
            \begin{equation}\label{}
         \nonumber \xi^{m+1}_{\Theta_{i_{t'}}(M)} \eta^{-1}_{i_{t'}}  \xi^{-m}_{i_{t'}, \Theta_{i_{t'}} (M)}=id_{M}
            \end{equation}
            which means that for every  $w \in M,$
            \begin{equation}\label{}
         \nonumber   \eta^{-1}_{i_{t'}}(w) =\xi^{-m-1}_{\Theta_{i_{t'}}, M}id_{\Theta_{i_{t'}} (M)}\xi^{m}_{i_{t'}, \Theta_{i_{t'}} (M)}(w)=\xi^{-1}_{i_{t'}}(w)
            \end{equation}
            which contradicts with the fact that $\eta_{i_{t'}}$ is different from the  morphism $\xi_{i_{t'}}$.
      \end{itemize}

\end{description}
\end{proof}

\begin{defn} Let $\mathcal{P}$ be a categorical preseed with its initial categorical preseed $\mathcal{P}_{0}$.  The  category $\mathfrak{C}(\mathcal{P})$ defined in the Proof of Lemma 3.24  is called  the \emph{clustered hyperbolic category} of $\mathcal{P}$ and the objects of the initial categorical preseed  $\mathcal{P}_{0}$ are called initial objects.
\end{defn}
One can see that a  clustered hyperbolic category of $\mathcal{P}$ could be also defined using the left initial objects instead of the right initial objects.

\begin{rem} If $\mathcal{P}=(\Theta, \xi, \mathcal{A}\langle \Theta \rangle)$ is a categorical preseed such that the skew Laurent category  $\mathcal{A}\langle \Theta \rangle$ contains no mutationally equivalent objects, then  the category $\mathcal{H}(\mathcal{P})$ is identified with  the clustered hyperbolic category   $\mathfrak{C}(\mathcal{P})$. Furthermore, in such case every Laurent skew category  $\mathcal{A}\langle \Theta \rangle$ of any categorical preseed $\mathcal{S}$, that is mutationally equivalent to $\mathcal{P}$  contains no mutationally equivalent objects.
\end{rem}

 In  Theorem 3.28  we provide  a  clustered hyperbolic category that   arises from Weyl categorical preseed $\mathcal{W}$, see Examples 3.8 and  3.11. The theorem also provides a relation between the arising clustered hyperbolic category and the category of representations of the Weyl cluster algebra $\mathcal{H}(p)$.

 Let $p=(\alpha, \xi, \theta)$ be a Weyl pressed in the skew-field $\mathcal{D}$ and $\mathcal{H}(p)$ be its Weyl cluster algebra. Denote the category of representations of $\mathcal{H}(p)$ by $\mathcal{H}(p)$-mod. Let $\psi_{*}:\mathcal{H}(p)$-mod$\rightarrow \textit{R}-mod$, the forgettable functor that sends each $\mathcal{H}(p)$ module to itself forgetting about the action of the elements of the cluster set $\mathcal{X}(p)$. Consider the  category $\mathcal{A}_{p}=\psi_{*}(\mathcal{H}(p)$-mod). One can see that  $\mathcal{A}_{p}$ is an additive category as it is a full subcategory of the category $\textit{R}-mod$.

\begin{thm}  The Weyl  preseed $p=(\alpha, \xi, \theta)$ gives rise to a clustered hyperbolic  category $\mathfrak{C}(\mathcal{W}_{0})$  that is generated by (possibly infinitely many)  equivalent hyperbolic categories where  each one of them is equivalent to  $\mathcal{A}_{p}$.

\end{thm}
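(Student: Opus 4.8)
The plan is to realize $\mathfrak{C}(\mathcal{W}_0)$ explicitly from the Weyl categorical preseed $\mathcal{W}$ of Examples 3.8 and 3.11, and then to recognize its generating hyperbolic subcategories as module categories over copies of the generalized Weyl algebra attached to $p$. First I would apply Lemma 3.24 with $\mathcal{P}=\mathcal{W}$: this produces the initial categorical preseed $\mathcal{W}_0$, whose skew Laurent objects are the right-end initial objects of their cluster classes (Definition 3.25), together with the clustered hyperbolic category $\mathfrak{C}(\mathcal{W}_0)$. By the construction of $\mathcal{H}^{R}(\mathcal{S})$ and $\mathcal{H}^{L}(\mathcal{S})$ preceding Definitions 3.14, together with Remark 3.15, each of these is a full subcategory of $\mathfrak{C}(\mathcal{W}_0)$; and by Lemma 3.26 every hyperbolic object of $\mathfrak{C}(\mathcal{W}_0)$ lies in exactly one categorical preseed $\mathcal{S}$ mutationally equivalent to $\mathcal{W}_0$. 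Hence
\[
\mathrm{Obj.}\,\mathfrak{C}(\mathcal{W}_0)=\bigcup_{\mathcal{S}}\bigl(\mathrm{Obj.}\,\mathcal{H}^{R}(\mathcal{S})\cup\mathrm{Obj.}\,\mathcal{H}^{L}(\mathcal{S})\bigr),
\]
the union running over the categorical preseeds $\mathcal{S}$ mutationally equivalent to $\mathcal{W}_0$; these full subcategories are the \emph{generating} hyperbolic categories, and it remains to identify each of them up to equivalence.

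Second I would fix one such $\mathcal{S}$, corresponding under Example 3.11 to a preseed $s$ mutationally equivalent to $p$, and identify $\mathcal{H}^{R}(\mathcal{S})$ with a module category. By Lemma 3.12(3) the category $\mathcal{H}^{R}(\mathcal{S})$ is a genuine rank-$n$ hyperbolic category $\mathcal{A}_n\{\Theta_{\mathcal{S}},\xi\}$ over $\mathcal{A}$, whose objects $(\gamma,M,\eta)$ carry the morphisms $\gamma_k=\varepsilon_{k,\Theta_k(M)}\circ f_k^{-1}$ and $\eta_k=f_k$. Reading off the explicit forms of Example 3.11, these morphisms are exactly the actions on $M$ of the cluster variables that generate the copy $H^{R}(s)$ of the generalized Weyl algebra: the two composition identities of Equation (3.3) reproduce the multiplicative relations (2.2) of Definition 2.1 (with $\gamma_i,\eta_i$ playing the roles of $x_i,y_i$), while the endomorphism-of-the-identity-functor property (3.5), together with the fact that $\Theta_i$ is induced from $\theta_i$, reproduces the twisting relations (2.1) and the commutations (2.3). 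Applying the rank-$n$ analogue of the equivalence of Example 3.3 then yields $\mathcal{H}^{R}(\mathcal{S})\simeq H^{R}(s)\text{-mod}$, and likewise $\mathcal{H}^{L}(\mathcal{S})\simeq H^{L}(s)\text{-mod}$.

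Third I would invoke Theorem 2.13(3): for every preseed $s$ mutationally equivalent to $p$ one has $H^{R}(s)\cong H^{L}(s)\cong\mathcal{H}(p)$ as generalized Weyl algebras. Consequently $H^{R}(s)\text{-mod}\cong H^{L}(s)\text{-mod}\cong\mathcal{H}(p)\text{-mod}$, which is the additive category $\mathcal{A}_p$ of the theorem (recovered from $\psi_*$ once the cluster-variable actions are restored by the hyperbolic data). Thus every generating hyperbolic category is equivalent to $\mathcal{A}_p$, and, being all equivalent to the single category $\mathcal{H}(p)\text{-mod}$, they are pairwise equivalent to one another; this mutual equivalence can alternatively be produced directly as the categorical analogue of Theorem 2.13(1), since $\mu^{R}_k$ and $\mu^{L}_k$ carry $\mathcal{H}^{R}(\mathcal{S})$ to $\mathcal{H}^{R}(\mu^{R}_k(\mathcal{S}))$ compatibly with the object-level mutation rules of Definition 3.9. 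Combining the three steps gives the assertion.

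I expect the main obstacle to be the second step: showing that the purely categorical data $(\gamma,M,\eta)$ of a hyperbolic object of $\mathcal{H}^{R}(\mathcal{S})$ encodes \emph{exactly} a module structure over $H^{R}(s)$, neither more nor less. One must check that the $2n$ morphisms match the $2n$ generators $x_i,y_i$, that the two composition identities (3.3) account precisely for the relations (2.2), that naturality of $\xi_k$ and $\varepsilon_k$ reproduces (2.1) and (2.3), and that the correspondence is functorial on morphisms via the commuting square (3.4). A secondary subtlety is the bookkeeping of the first step: one must ensure that the right-end initial choice defining $\mathcal{W}_0$ (through Lemma 3.26) really makes the generating subcategories pairwise disjoint on objects, so that $\mathfrak{C}(\mathcal{W}_0)$ is genuinely \emph{generated} by them rather than merely covered with overlaps.
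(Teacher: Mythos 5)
Your proposal is correct, and its skeleton coincides with the paper's: decompose $\mathfrak{C}(\mathcal{W}_{0})$ into the hyperbolic categories $\mathcal{H}^{R}(\mathcal{S})$ and $\mathcal{H}^{L}(\mathcal{S})$ attached to the categorical preseeds $\mathcal{S}$ mutationally equivalent to $\mathcal{W}_{0}$, identify each with $\mathcal{H}(s)\text{-mod}$ for the corresponding preseed $s$, and conclude via Theorem 2.13(3) that all of them are equivalent to $\mathcal{A}_{p}$. You differ in two local steps, both legitimately. First, to get clusteredness you run the general zigzag machinery of Lemma 3.24 plus the uniqueness lemma (which is Lemma 3.25 in the paper, not 3.26 --- your citations of 3.25/3.26 are swapped), whereas the paper takes a shortcut special to the Weyl case: the initial cluster variables $\alpha_{1},\ldots,\alpha_{n}$ are algebraically independent, hence so are the morphisms $\overline{\alpha}_{1},\ldots,\overline{\alpha}_{n}$, so the skew Laurent category $\mathcal{A}_{p}\langle\Theta\rangle$ contains no mutationally equivalent objects and Remark 3.27 (with Remark 3.17) makes the whole mutation category clustered outright. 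Your heavier route lands in the same place because the objects $(M,\overline{\alpha})$ are naked (Example 3.19), hence are themselves the right-end initial objects of their cluster classes, so the Lemma 3.24 construction returns $\mathcal{W}_{0}$ unchanged --- a point worth stating explicitly, since it is what guarantees your $\mathcal{W}_{0}$ is the one in the theorem. Second, for the equivalence $\mathcal{H}^{R}(\mathcal{S})\simeq H^{R}(s)\text{-mod}$ you verify that the hyperbolic-object data satisfy the generalized Weyl relations ((3.3) matching (2.2), naturality of $\xi_{k},\varepsilon_{k}$ matching (2.1) and (2.3)) and invoke a rank-$n$ analogue of Example 3.3, whereas the paper simply writes down the explicit mutually inverse functors $\mathcal{F}_{s}\colon W\mapsto (\mu^{R}(\overline{\beta}),W,\overline{\beta})$ and $\mathcal{G}_{s}$, identities on morphisms, built from the bijection $M\leftrightarrow \overline{(M,\overline{\alpha})}$ between objects of $\mathcal{A}_{p}$ and cluster classes. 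Your relation-check is exactly the substance hiding behind the paper's assertion that $\mathcal{F}_{s}$ and $\mathcal{G}_{s}$ are well defined, so it buys a more transparent verification; the paper's version buys brevity and makes the chain $\mathcal{A}_{p}\cong\mathcal{H}(s)\text{-mod}\cong\mathfrak{C}^{R}(\mathcal{S})\cong\mathfrak{C}^{L}(\mathcal{S})\hookrightarrow\mathfrak{C}(\mathcal{W}_{0})$ explicit.
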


\begin{proof}
 Let   $\mathcal{W}_{0}=(\Theta, \xi, \mathcal{A}_{p}\langle\Theta \rangle)$ be a categorical preseed in the additive category $\mathcal{A}_{p}$ induced from $p$ same way as in  Example 3.8, where the set of objects of the skew Laurent category $\mathcal{A}_{p}\langle\Theta \rangle$ is given by $\{(M, \overline{\alpha}); M \ \text{is an object in} \ \mathcal{A}_{p}\}$. By definition of preseeds the elements of $\alpha=\{\alpha_{1}, \ldots, \alpha_{n}\}$ are algebraically independent then so are the morphisms $\{\overline{\alpha}_{1}, \ldots, \overline{\alpha}_{n}\}$ which means that non of the objects of $\mathcal{A}_{p}\langle\Theta \rangle$ are mutationally equivalent. Therefore the category $\mathfrak{C}(\mathcal{W}_{0})$ of all hyperbolic objects of the categorical preseed $\mathcal{W}_{0}$ is naturally a clustered hyperbolic category, thanks to Remark 3.27 and Remark 3.17.

One can see that the  algebras  $\mathcal{H}^{R}(s)$, $\mathcal{H}^{L}(s)$  and the Weyl cluster algebra $\mathcal{H}(p)$ are all isomorphic for every preseed $s$ that is  mutationally equivalent  to $p$, thanks to Part (3) of Theorem 2.13.  Hence the categories $\mathcal{H}(p)-mod, H^{R}(s)-mod$ and $ H^{L}(s)$-mod are equivalent. Also, one can define  a one to one correspondence between the objects of  $\mathcal{A}_{p}$ and the set of all cluster classes of $\mathfrak{C}(\mathcal{W}_{0})$  given by $M\leftrightarrow \overline{(M, \overline{\alpha})}$. Which gives rise to pairs of  inverse functors $\mathcal{F}_{s}$  and $\mathcal{G}_{s}$  for every preseed $s$, mutationally  equivalent to $p$ and for every categorical preseed $\mathcal{S}$ mutationally equivalent to $\mathcal{W}_{0}$ that is $\mathcal{S}=\mu(\mathcal{W}_{0})$ and $s=\mu'(p)$ for some sequence of categorical mutations $\mu$ and  sequence of preseeds mutations $\mu'$ corresponding to $\mu$, given by

\begin{equation}\label{}
   \nonumber \mathcal{F}_{s}: \mathcal{H}(s)-\text{mod}\rightarrow \mathfrak{C}^{R}(\mathcal{S}) \ \ \text{given by} \ \ \ W \mapsto (\mu^{R}(\overline{\beta}),W , \overline{\beta});
     \end{equation}
     and
     \begin{equation}\label{}
   \nonumber \mathcal{G}_{s}: \mathfrak{C}^{R}(\mathcal{S}) \rightarrow \mathcal{H}(s)-\text{mod} \ \ \text{given by} \ \ (\mu^{R}(\overline{\beta}),W , \overline{\beta}) \mapsto  W;
     \end{equation}
     both of $\mathcal{F}_{s}$  and $\mathcal{G}_{s}$ are identities on morphisms, here $\overline{\beta}=\mu(\overline{\alpha})$.   Since the category $\mathfrak{C}^{R}(S)$ (respect to $\mathfrak{C}^{L}(S)$) is a full subcategory of $\mathfrak{C}(\mathcal{W}_{0})$, then $\mathcal{A}_{p}$ is embedded in the clustered hyperbolic category $\mathfrak{C}(\mathcal{W}_{0})$.
Finally, the following diagram explains the relations between the categories  $\mathcal{A}_{p}, \mathcal{H}(s)-\text{mod},  \mathfrak{C}^{R}(S)$,  $\mathfrak{C}^{L}(S)$ and
$ \mathfrak{C}(\mathcal{W}_{0})$

\begin{equation}
\mathcal{A}_{p} \cong \mathcal{H}(s)-\text{mod} \cong \mathfrak{C}^{R}(S) \cong  \mathfrak{C}^{L}(S) \hookrightarrow  \mathfrak{C}(\mathcal{W}_{0}).
\end{equation}

\end{proof}

\begin{thm} Let $\mathcal{P}$ be a categorical preseed in $\mathcal{A}_{p}$ and $\mathfrak{C}$ be any clustered hyperbolic subcategory of $\mathcal{H}(\mathcal{P})$. Then there is a map $\mathbf{F}_{P}$ from  $Obj.\mathfrak{C}$  to the  Weyl cluster algebra $\mathcal{H}(p)$ such that image of $\mathbf{F}_{p}$ generates $\mathcal{H}(p)$.

\end{thm}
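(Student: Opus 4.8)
The plan is to build $\mathbf{F}_{p}$ from the ``symbol'' correspondence between the component morphisms of a hyperbolic object and the cluster variables of $p$. Since $\mathfrak{C}$ is a clustered hyperbolic subcategory of $\mathcal{H}(\mathcal{P})$ and $\mathcal{P}$ is a categorical preseed in $\mathcal{A}_{p}=\psi_{*}(\mathcal{H}(p)\text{-mod})$, every object of $\mathfrak{C}$ is a hyperbolic object $(\gamma, M, \eta)$ with $M$ an $\mathcal{H}(p)$-module viewed as an $R$-module and each $\eta_{k}\colon \Theta_{k}(M)\to M$ an $R$-module morphism. Comparing the explicit mutation formulas of the Weyl categorical preseed in Example 3.11 with the description of the cluster variables in Equation (2.14), each such $\eta_{k}$ is the action of a uniquely determined cluster variable $u_{k}\in \mathcal{X}(p)$, i.e. $\eta_{k}(m)=u_{k}\cdot m$; I would write this assignment as $\sigma(\eta_{k})=u_{k}$, and likewise $\sigma(\gamma_{k})=\mu(u_{k})$ for the mutated cluster variable. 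First I would record this correspondence as a lemma, invoking Lemma 3.25 to guarantee that the categorical preseed $\mathcal{S}$ carrying $(M,\eta)$ is unique, so that the symbols $\sigma(\eta_{k})$ are unambiguous.

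Next I would set $\mathbf{F}_{p}(\gamma, M, \eta):=\prod_{k=1}^{n}\sigma(\eta_{k})=u_{1}\cdots u_{n}\in \mathcal{H}(p)$, the product taken inside $\mathcal{D}$. Well-definedness is then immediate from the previous step together with the clustered condition of Definition 3.14(3), which forces each underlying skew Laurent pair $(M,\eta)$ to occur in exactly one skew Laurent category, so no ambiguity of symbols can arise; this makes $\mathbf{F}_{p}$ a genuine set map $\mathrm{Obj.}\mathfrak{C}\to \mathcal{H}(p)$. The essential content, however, is not the packaging but the fact that \emph{every} cluster variable is individually realized as some component $\sigma(\eta_{k})$.

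It then remains to show that $\mathrm{Im}(\mathbf{F}_{p})$ generates $\mathcal{H}(p)$, and the strategy is to produce every cluster variable. Because $\mathfrak{C}$ is clustered hyperbolic, Lemma 3.25 together with the correspondence $\mathcal{S}\leftrightarrow s$ from the proof of Theorem 3.28 shows that the underlying skew Laurent objects range over all categorical preseeds $\mathcal{S}$ mutationally equivalent to the initial one, so their components realize the clusters of every preseed $s$ mutationally equivalent to $p$. By Definition 2.6 these components exhaust the whole cluster set $\mathcal{X}(p)$, so every cluster variable occurs as some $\sigma(\eta_{k})$. To pass from ``realized as a factor'' to ``generated by the image'' I would use that cluster variables lying in distinct directions commute—this follows from the cluster relations (2.7) and the preseed relation (2.9), since a direction-$k$ variable is a skew-Laurent expression in $x_{k}$ and the elements of $F_{k}$—so that the problem decouples, via the commuting of mutations in Remark 3.10, into $n$ independent rank-one problems in which a single component directly yields the cluster variable. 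Combining this with Theorem 2.13(3), which identifies $\mathcal{H}(p)$ with the generalized Weyl algebra generated by the cluster variables of each $s$, gives that the $R$-subalgebra generated by $\mathrm{Im}(\mathbf{F}_{p})$ is all of $\mathcal{H}(p)$.

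The hard part will be exactly this last step: showing that the image, whose elements are products of one cluster variable per direction, actually generates rather than merely spanning a proper subalgebra, since in the noncommutative ring $\mathcal{D}$ one cannot naively cancel a factor. I expect the resolution to rest on the directionwise independence just described, reducing the claim to the rank-one situation where the cluster set of Example 2.7 is visibly a generating set; it may be cleanest to argue first on the full subcategory of objects differing from an initial object in a single direction, where the isolated component makes the cluster variable appear directly, and then assemble the directions using Remark 3.10. The remaining bookkeeping—matching each mutated morphism in Example 3.11 to the correct cluster variable in (2.14), consistently across both generations of categorical right and left mutations—is routine but must be carried out carefully to keep $\sigma$ coherent.
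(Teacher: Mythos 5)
Your construction parallels the paper's proof in outline---well-definedness of the component symbols via the uniqueness statement of Lemma 3.25, the definition of $\mathbf{F}_{p}$ as an ordered product over the $n$ directions, and surjectivity onto the cluster set as the route to generation---but there is a genuine gap at exactly the step you flag as hard, and your proposed resolution cannot close it. With your definition $\sigma(\eta_{k})=u_{k}$, an initial component $\beta_{k}$ is sent to the initial cluster variable $\alpha_{k}$, so every value of your $\mathbf{F}_{p}$ is a full product $u_{1}\cdots u_{n}$ containing one genuine cluster variable from \emph{each} direction. For $n\geq 2$ the image of such a map does not generate $\mathcal{H}(p)$: by (2.14) each direction-$k$ cluster variable has the form $\xi_{k}^{a}x_{k}^{\pm 1}\xi_{k}^{b}$, hence is homogeneous of degree $\pm 1$ in $x_{k}$ and degree $0$ in $x_{j}$, $j\neq k$, in the Laurent multigrading furnished by Theorem 2.13(2) (the coefficient ring $R[\theta_{k}^{\pm 1}(\xi_{k}^{-1})]$ sits in multidegree zero and is preserved by conjugation by the $x_{k}$, so the grading is well defined). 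Every element of your image therefore has multidegree $(\pm 1,\ldots,\pm 1)$, and any word of length $\ell$ in such elements with degree-zero coefficients has all $n$ entries of its multidegree congruent to $\ell \bmod 2$; an element of multidegree $(1,0,\ldots,0)$, e.g.\ the cluster variable $x_{1}$ itself, can never arise. The cross-direction commutativity you correctly extract from (2.8)--(2.9) does not rescue this: commuting lets you reorder factors, not cancel them, and the factors $u_{i}$ are not invertible in $\mathcal{H}(p)$, so from $u_{1}u_{2}$ and $u_{1}u_{2}'$ you cannot extract $u_{2}'$. The rank-one decoupling you invoke applies to mutation sequences (Remark 3.10), not to the image of a product map, so your argument as written proves only that every cluster variable \emph{divides} some element of the image, which is strictly weaker than generation.

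The paper's proof evades precisely this obstruction by one small but essential twist that your symbol map misses: the \emph{step back map} $f_{p}$ sends an initial morphism ($m=0$) to $1$, not to $\alpha_{k}$, and sends $\mu_{k^{m}}(\beta_{k})$ for $m\neq 0$ to the cluster variable obtained by stepping one mutation back toward the initial preseed ($\mu^{R}_{k}$ for $m<0$, $\mu^{L}_{k}$ for $m>0$). Consequently, for a hyperbolic object whose skew Laurent part differs from an initial object in only one direction $k$---taking the component $\mu_{k^{\pm q\pm 1}}(\beta_{k})$, one step beyond the target---the ordered product collapses to $1\cdots z\cdots 1=z$, so every cluster variable, and by the same device every cluster monomial $z_{1}\cdots z_{k}$, lies literally in the image, and generation is immediate from Definition 2.11. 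If you modify your $\sigma$ to take the value $1$ on initial components and incorporate the one-step shift, your argument goes through and essentially coincides with the paper's; without that modification the generation claim fails for the parity reason above.
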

\begin{proof}  Let   $\mathbb{W}=(\Theta, \xi, \mathcal{A}_{p}\langle\Theta \rangle)$ be the initial categorical preseed  of $\mathfrak{C}$. Let $\mathcal{M}(\mathbb{W})$ be the set of all morphisms that appear in every possible skew Laurent object $(M, \overline{\beta})$ that can be obtain from some initial skew Laurent objects $(W, \beta)$ by applying some sequence of categorical mutations, where $\overline{\beta}=\{\overline{\beta}_{1},\ldots, \overline{\beta}_{n}\}$. Let $\{\alpha_{1}, \ldots, \alpha_{n}\}$ be the set of initial cluster variables of the Weyl preseed $p$.  We introduce the   \textit{step back map} $f_{p}:\mathcal{M}(\mathbb{W})\rightarrow \mathcal{X}(p)$  given by

 \begin{equation}\label{}
 f_{p}(\overline{\beta_{k}})= f_{p}(\mu_{k^{m}}(\beta_{k})) =\begin{cases} \mu^{R}_{k}(\mu_{k^{m}}(\alpha_{k})), & \text{ if } m< 0;\\
  1, & \text{ if } m=0;\\
         \mu^{L}_{k}(\mu_{k^{m}}(\alpha_{k})), & \text{ if } m> 0,
    \end{cases}
    \end{equation}
    where, without loss of generality, we assumed that $\overline{\beta_{k}}=\mu_{k^{m}}(\beta_{k})$ for some integer number $m$.
One can see that the step back map $f_{p}$ is well define, since for every skew Laurent object $( M, \overline{\beta})$ there is a unique categorical pressed $\mathcal{S}$ mutationally equivalent to $\mathbb{W}$ such that $( M, \overline{\beta})$ is an object in the skew Laurent category of $\mathcal{S}$, thanks to Lemma 3.25. Which guarantees the uniqueness of the sequence of categorical mutations that creates each $\overline{\beta_{k}}$ from $\beta_{k}$ for $k\in [1, n]$ and hence  the uniqueness of $f_{p}(\overline{\beta_{k}})$ for every $k\in [1, n]$. Now consider the  map
\begin{equation}\label{}
 \nonumber \mathbf{F}_{p}: Obj.\mathfrak{C} \rightarrow \mathcal{H}(p),
 \end{equation}
  such that
  \begin{equation}\label{}
  (\overline{\beta'}, M, \overline{\beta})\mapsto \prod^{n}_{i=1} f_{p}(\overline{\beta}_{i}),
 \end{equation}
where the product  is ordered as follows \ $\prod^{n}_{i=1} f_{p}(\overline{\beta}_{i})=f_{p}(\overline{\beta}_{1})\cdots f_{p}(\overline{\beta}_{n})$. Now we prove that every cluster variable in $\mathcal{X}(p)$ has a pre-image in Obj.$\mathfrak{C}$. Let $z$ be a cluster variable in $\mathcal{H}(p)$. Then there is a non-negative $q$ and $k\in [1, n]$ such that  $\mu_{k^{\pm q}}(\alpha_{k})=z$, where $(+)$ is for right mutation and $(-)$ is for the left. Consider the hyperbolic object $(\nu', M, \nu)$ where $\nu=(\beta_{1}, \ldots, \mu_{k^{\pm q\pm1}}(\beta_{k}),\ldots \beta_{n})$, hence
\begin{equation}\label{}
  \nonumber \mathbf{F}_{p}(\nu', M, \nu)=1\cdots\mu_{k^{\mp 1}}(\mu_{k^{\pm q \pm 1}}(\beta_{k}))\cdots1=z.
\end{equation}
 With a similar argument we can show that every cluster monomial of the form $z_{1}\cdots z_{k}, k\in[1,n]$ has a hyperbolic object in $\mathfrak{C}$ as its $\mathbf{F}_{p}$-pre-image.

\end{proof}

\begin{cor} In particular, there is a map from Obj.$\mathfrak{C}(\mathcal{W}_{0})$ (where $\mathfrak{C}(\mathcal{W}_{0})$ is the clustered hyperbolic category defined in Theorem 3.28) with its image generates $\mathcal{H}(p)$ and the set of hyperbolic objects
\begin{equation}\label{}
\nonumber  \{\mu_{k^{m}}(\overline{\alpha}', M, \overline{\alpha}) | m\in \mathbb{Z}\setminus\{-1, 0\}, k\in [1,n]\}
\end{equation}
 is in one to one correspondence with  the  set of all cluster variables $\mathcal{X}(p)$.

\end{cor}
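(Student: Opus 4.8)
The plan is to read both assertions off Theorem 3.29 applied to the Weyl categorical preseed $\mathcal{W}_0$ of Theorem 3.28. For the generating statement, I would first recall that in the proof of Theorem 3.28 the skew Laurent category $\mathcal{A}_p\langle\Theta\rangle$ was shown to have no mutationally equivalent objects, because the initial cluster variables $\alpha=\{\alpha_1,\dots,\alpha_n\}$ are algebraically independent and hence so are the morphisms $\{\overline{\alpha}_1,\dots,\overline{\alpha}_n\}$. By Remark 3.27 this forces $\mathcal{H}(\mathcal{W}_0)=\mathfrak{C}(\mathcal{W}_0)$, so $\mathfrak{C}(\mathcal{W}_0)$ is in particular a clustered hyperbolic subcategory of $\mathcal{H}(\mathcal{W}_0)$; Theorem 3.29 then hands us a map $\mathbf{F}_p\colon\mathrm{Obj.}\mathfrak{C}(\mathcal{W}_0)\to\mathcal{H}(p)$ whose image generates $\mathcal{H}(p)$, which is the first claim.

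For the correspondence I would evaluate $\mathbf{F}_p$ on the distinguished objects $\mu_{k^m}(\overline{\alpha}',M,\overline{\alpha})$. Each such object is obtained by mutating only in direction $k$, so its skew Laurent part is $(\overline{\alpha}_1,\dots,\mu_{k^m}(\overline{\alpha}_k),\dots,\overline{\alpha}_n)$, with every component other than the $k$-th still initial. Since the step back map sends any initial component $\overline{\alpha}_i=\mu_{i^0}(\alpha_i)$ to $1$, the ordered product defining $\mathbf{F}_p$ collapses to $\mathbf{F}_p(\mu_{k^m}(\overline{\alpha}',M,\overline{\alpha}))=f_p(\mu_{k^m}(\alpha_k))$. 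The heart of the argument is then to compute $f_p(\mu_{k^m}(\alpha_k))$ from its piecewise definition together with Proposition 2.6(2), namely $\mu^R_k\mu^L_k=\mu^L_k\mu^R_k=\mathrm{id}$ on the preseed. This gives $f_p(\mu_{k^1}(\alpha_k))=\mu^L_k\mu^R_k(\alpha_k)=\alpha_k$, and, after cancelling the adjacent opposite mutation pair, $f_p(\mu_{k^m}(\alpha_k))=\mu^R_{k^{\,m-1}}(\alpha_k)$ for $m\geq1$ and $f_p(\mu_{k^m}(\alpha_k))=\mu^L_{k^{\,-m-1}}(\alpha_k)$ for $m\leq-2$.

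This single computation also accounts for the two excluded indices: $m=0$ produces $1\notin\mathcal{X}(p)$, while $m=-1$ yields $\mu^R_k\mu^L_k(\alpha_k)=\alpha_k$, reproducing exactly the value already obtained at $m=1$. Dropping $m\in\{-1,0\}$ therefore leaves precisely $\alpha_k$ (at $m=1$), the right chain $\{\mu^R_{k^j}(\alpha_k):j\geq1\}$ (at $m\geq2$) and the left chain $\{\mu^L_{k^j}(\alpha_k):j\geq1\}$ (at $m\leq-2$). Since a mutation in direction $k$ alters only the $k$-th cluster variable, one has $\mathcal{X}(p)=\bigcup_{k=1}^n\bigl(\{\alpha_k\}\cup\{\mu^R_{k^j}(\alpha_k)\}_{j\geq1}\cup\{\mu^L_{k^j}(\alpha_k)\}_{j\geq1}\bigr)$, so the assignment is onto $\mathcal{X}(p)$.

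Injectivity I would handle on both sides. Distinct pairs $(k,m)$ give distinct hyperbolic objects: by Lemma 3.25 each object of $\mathfrak{C}(\mathcal{W}_0)$ sits in a unique categorical preseed mutationally equivalent to $\mathcal{W}_0$, and the absence of mutationally equivalent objects in $\mathcal{A}_p\langle\Theta\rangle$ keeps the mutated $k$-th morphisms distinct for distinct $m$, while algebraic independence separates different indices $k$. Distinct pairs likewise give distinct cluster variables, for the same two reasons. The one point that needs care, and the only real obstacle, is the index bookkeeping created by the built-in one-step shift of the step back map: making transparent why $f_p$ sends both $m=1$ and $m=-1$ to $\alpha_k$ is exactly what justifies deleting one of them, and getting the off-by-one shifts on the two chains right is where the proof must be written carefully rather than waved through.
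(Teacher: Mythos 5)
Your proposal is correct and follows essentially the same route as the paper: the paper derives this corollary directly from Theorem 3.29, whose proof already contains your key computation --- the object $\mu_{k^{\pm q\pm 1}}(\overline{\alpha}',M,\overline{\alpha})$ is sent by the step back map $f_{p}$ (which undoes one mutation) to $\mu_{k^{\pm q}}(\alpha_{k})$, giving exactly your one-step shift on the two chains. Your explicit justification of the excluded indices ($m=0$ yielding $1\notin\mathcal{X}(p)$, and $m=-1$ duplicating the value $\alpha_{k}$ already produced at $m=1$) is a detail the paper leaves implicit, but it matches the paper's conventions and machinery exactly.
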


\subsection*{Acknowledgments}
 This work had  been started while I was attending the Topics in Mathematics class taught by the late Alexander L. Rosenberg. Many thanks to Zongzhu Lin for suggesting the problem and for our valuable discussions. Special thanks to Brian Bischof for introducing me to hyperbolic categories. I would like to thank the anonymous referee for the valuable remarks.


\begin{thebibliography}
\normalfont\footnotesize
\bibitem{VB1}
V. V. Bavula, “Generalized Weyl algebras and their representations”, Algebra i Analiz, \textbf{4}:1 (1992), 75–97; English transl. in St. Petersburg
Math. J.\textbf{4} (1993), no. 1, 71–92.

\bibitem{BFZ}
A. Berenstein, S. Fomin and A. Zelevinsky, Cluster algebra III:
Upper bounds and double Bruhat cells, Duke Math. J.; math.
RT/0305434.

\bibitem{BFZ}
A. Berenstein, and A. Zelevinsky, Quantum  Cluster algebras, Adv. Math. \textbf{195} (2005), no. 2, 405-455.

\bibitem{BMRT}
A. B. Buan, R. Marsh, M. Reineke, I. Reiten and G. Todorov, Tilting theory and cluster combinatorics, Adv. Math. \textbf{204} (2006), no. 2, 572–618.

\bibitem{CC}
P. Caldero and F. Chapoton, Cluster algebras as Hall algebras of quiver representations, Comment. Math. Helv. \textbf{81} (2006), 595–616.
\bibitem{D}
J. Dixmier, Enveloping algebras, North Holland 1977.
\bibitem{FG1}
V. V. Fock, and A. B. Goncharov, Cluster Ensembles, Quantization and The Dilogarithm, Ann.Scient.Ec.Norm.Sup, \textbf{42}, 865-930, 2009.
\bibitem{FG2}
V. V. Fock, and A. B. Goncharov, Cluster Ensembles, Quantization and The Dilogarithm II: The intertwiner, arXive:math/0702398v1 [math.QA]13 Feb. 2007.
\bibitem{FG3}
V. V. Fock, and A. B. Goncharov, The quantum  Dilogarithm and Representations of Cluster varieties, arXive:math/0702398v6 [math.QA]21 Jul. 2008.



\bibitem{FZ02}
S. Fomin and A. Zelevinsky. Cluster algebras I. Foundations.
J.Amer.Math. Soc., \textbf{15(2)}:497-529(electronic), 2002.


\bibitem{FZ03}
S. Fomin and A. Zelevinsky. Cluster algebras II: Finite type classification, Invent. Math. \textbf{154} (2003), 63-121.

\bibitem{FZ04}
S. Fomin and A. Zelevinsky. Cluster algebras IV: Coefficients,
Compositio Mathematica \textbf{143} (2007)112-164.

\bibitem{HL10}
D. Hernandez, B. Leclerc, Cluster algebras and quantum affine algebras,  Duke Math. J. Volume \textbf{154}, Number 2 (2010), 265-341.



\bibitem{L1}
 B. Leclerc, Cluster algebras and representation theory, arXiv:1009.4552v1 [math.RT] 23 Sep 2010
\bibitem{LR}
V.A. Lunts and  A.L. Rosenberg, Kashiwara theorem for hyperbolic algebras, Preprint MPIM-1999-82, 1999.
\bibitem{O31}
O. Ore, Linear equations in non-commutative fields, Ann. of Math. (2)32 (1931)463-477.
\bibitem{R95}
A. L. Rosenberg, Algebraic Geometry and representations of quantized Algebras,  Kluwer academic  publishers, Dordrecht, Boston London, 1995.


\bibitem{Sa10}
I. A. Saleh, Cluster Structure on Generalized Weyl Algebras,  Algebr Represent Theor (2016)\textbf{ 19} :1017-1041.

\bibitem{Z04}
 A. Zelevinsky. Cluster algebras: Notes for 2004 IMCC (Chonju, Korea, August
 2004), arXiv:math.RT/0407414.



\end{thebibliography}
\end{document}